\newcommand{\R}{\mathds R}
\newcommand{\I}{\mathds 1}
\def\d{{\rm d}}
\def\<{\langle}
\def\>{\rangle}
 \def\ss{\sqrt}
\def\R{\mathbb R}   \def\ss{\sqrt} 
 \def\kk{\kappa} 
  \def\vv{\varepsilon} 
\def\<{\langle} \def\>{\rangle}  
  \def\nn{\nabla}  
\def\d{\text{\rm{d}}}   
  \def\si{\sigma} 
 \def\beq{\begin{equation}}  
\def\e{\text{\rm{e}}}    
 \def\P{\mathbb P}
\def\E{\mathbb E}
\def\to{\rightarrow}
\def\8{\infty}\def\3{\triangle}
\def\1{\lesssim}
\renewcommand{\bar}{\overline}
\renewcommand{\hat}{\widehat}
\renewcommand{\tilde}{\widetilde}
\newtheorem{theorem}{Theorem}[section]
\newtheorem{lemma}[theorem]{Lemma}
\newtheorem{proposition}[theorem]{Proposition}
\newtheorem{corollary}[theorem]{Corollary}
\theoremstyle{definition}
\newtheorem{example}[theorem]{Example}
\newtheorem{remark}[theorem]{Remark}
\renewcommand{\theequation}{\thesection.\arabic{equation}}
\numberwithin{equation}{section}
\begin{document}
\allowdisplaybreaks

\title[existence, uniqueness, and multiplicity] {Stationary distributions of McKean-Vlasov SDEs with jumps: existence, uniqueness, and multiplicity}

\author{
Jianhai Bao  \qquad
Jian Wang}
\date{}
\thanks{\emph{J.\ Bao:} Center for Applied Mathematics, Tianjin University, 300072  Tianjin, P.R. China. \url{jianhaibao@tju.edu.cn}}

\thanks{\emph{J.\ Wang:}
School  of Mathematics and Statistics \& Key Laboratory of Analytical Mathematics and Applications (Ministry of Education) \& Fujian Provincial Key Laboratory
of Statistics and Artificial Intelligence, Fujian Normal University, 350007 Fuzhou, P.R. China. \url{jianwang@fjnu.edu.cn}}

\maketitle

\begin{abstract}
In this paper, we are interested in the issues on existence, uniqueness, and multiplicity of stationary distributions
for McKean-Vlasov SDEs with jumps. In detail, with regarding to  McKean-Vlasov SDEs driven by pure jump L\'{e}vy processes,
we principally    (i)  explore  the existence of stationary distributions via
Schauder's fixed point theorem under an appropriate Lyapunov condition;  (ii) tackle the uniqueness of stationary distributions and the convergence to the equilibria
as long as the underlying drifts are continuous with respect to the measure variables under the weighted total variation distance
and the $L^1$-Wasserstein distance, respectively; (iii) demonstrate  the multiplicity of stationary distributions under a locally dissipative condition. In addition, some illustrative examples are provided to show that the associated McKean-Vlasov SDEs possess a unique,  two  and three stationary distributions, respectively.

\medskip

\noindent\textbf{Keywords:} McKean-Vlasov stochastic differential equation, L\'{e}vy noise, stationary distribution, existence, uniqueness,
 multiplicity
\smallskip

\noindent \textbf{MSC 2020:}  60H10; 60J76; 60G10
\end{abstract}
\section{Background and main results}
\subsection{Background}
For a linear Markov process  (i.e., the transition kernel depends merely on the current state), there are various methods to show  existence of invariant probability measures (IPMs for short); see, for instance,  \cite{DZ,Hairer} via  Krylov-Bogoliubov's criterion, \cite{PR} by the aid of the remote start approach, and \cite{HMS,Eb,LWa} on the basis of   weak contractivity under some  probability (quasi)-distances, to name only a very few of them. With regards to the previous  methods,
the essential ingredient is that the Markov operator generated by the underlying Markov process is a linear Markov semigroup. Nevertheless, concerning  a nonlinear Markov process (i.e.,   the  transition function  may depend not only on the current state  but also on the current
distribution of the process), the strategies mentioned previously are no longer available  to tackle existence of IPMs,
which  are also termed as stationary distributions in literature (throughout this paper, we tend to call them stationary distributions); see e.g. \cite{MR, Zhang}.
In particular, the nonlinear property of the corresponding
stochastic process results in the invalidity of the previous  techniques, which are  applied to handle existence of stationary distributions for linear Markov processes.

In the real world, the distribution of a stochastic process under investigation embodies  an macroscopic quantity. Due to the involvement of distribution variables, the evolution behaviour of the resulting stochastic systems might  change dramatically. As a toy example,
we consider the scalar Ornstein–Uhlenbeck process $(X_t)_{t\ge0}$,  which is determined by  the   linear SDE:
\begin{align}\label{1E}
\d X_t=-\lambda X_t\,\d t+\d W_t,
\end{align}
where $\lambda>0$ and $(W_t)_{t\ge0}$ is a $1$-dimensional Brownian motion.  It is very well known that the SDE \eqref{1E} possesses a unique IPM.
Once the expectation of the solution process (i.e., $\E X_t$) is plugged into the drift part, we   correspondingly derive the following mean-field SDE:
\begin{align}\label{2E}
\d X_t=\big(-\lambda X_t+\E X_t\big)\,\d t+\d W_t.
\end{align}
As shown in \cite[p.\ 403--404]{AD}, the SDE \eqref{2E} enjoys a unique stationary distribution (which indeed is a zero-mean Gaussian measure)
in case of $\lambda\neq1;$ nonetheless, for the setting $\lambda=1$,
the SDE \eqref{2E} admits infinite many stationary distributions by solving a self-consistent algebraic equation. Therefore, as far as the mean-field SDE \eqref{2E} is concerned, the phenomenon on phase transition has taken place by virtue of the intervention of distribution arguments.

Below, as for the topic on phase transition of distribution dependent SDEs (which, in terminology,  are also named as mean-field or McKean-Vlasov SDEs),
we give a rough  overview on related progresses. For  convenience of presentation,   we consider the following  mean-field SDE on $\R$:
\begin{align}\label{3E}
\d X_t=-\bigg(V'(X_t)+\int_{\R}F'(\cdot,y)(X_t)\mathscr L_{X_t}(\d y)\bigg)\d t +\si\d W_t,
\end{align}
where the confinement potential $V:\R\to\R$, the interaction potential $F:\R\to\R$, $\mathscr L_{X_t}$ means the law of $X_t,$ $\si$ is a non-zero constant, and $(W_t)_{t\ge0}$ is a $1$-dimensional Brownian motion.
In the seminal work \cite{Dawson}, Dawson worked on the scalar SDE \eqref{3E} with $V(x)= \frac{1}{4}x^4-\frac{1}{2}x^2 $ and $F(x,y)=\frac{1}{2}\theta(x-y)^2$ for some constant $\theta>0$, and revealed that the corresponding SDE has a unique stationary distribution when $\si$ is greater than or equal to some positive threshold $\si_c$,
and three stationary distributions once $\si\in(0,\si_c)$. Later on, \cite{Tugaut}
considered the SDE \eqref{3E} with a much more general confinement potential, in which  $V$ is a polynomial function with the degree $\mbox{deg}(V)\ge4$  and $F(x,y)=\frac{1}{2}\theta(x-y)^2$ for some constant $\theta>0$. Satisfactorily, a positive threshold
$\si_c$ was also provided therein and it was shown that the associated SDE \eqref{3E} possesses a unique symmetric stationary distribution as long as $\si\ge \si_c$, and exactly three stationary distributions in case of $\si\in(0,\si_c).$ No matter what \cite{Dawson} or \cite{Tugaut}, given the intensity  $\theta$  of the interaction potential, the intensity  $\si$  of the noise can lead to an occurrence of phase transition.
Recently,  \cite{LXZ} investigated \eqref{3E} with $V(x)=x^4-\beta x^2 $ for some $\beta>0$,
$F(x,y)=-\gamma xy $ for some $\gamma>0$, and $\si=\ss2$. In particular,
 when the noise intensity was given in advance, \cite{LXZ} showed that there exist critical values $\gamma_c=2\ss 3$ and $\beta_c=(12-\gamma^2)/(2\gamma)>0$ such that the SDE \eqref{3E} has $(i)$  three stationary distributions as soon as $\gamma\in(0,2\ss3)$ and $\beta>\beta_c$, or $\gamma\ge2\ss3$ and $\beta>0;$ (ii) a unique stationary distribution whenever $\gamma\in(0,2\ss3)$
and $\beta\in(0,\beta_c).$
With contrast to  \cite{Dawson,Tugaut}, \cite{LXZ} brought the phenomenon on phase transition into light based on another aspect. More precisely,
the authors in \cite{LXZ} demonstrated that the phase transition arises from the inverse temperature $\beta$ and the interaction intensity $\gamma$, rather than the noise intensity $\si.$

Concerning the previous references   \cite{Dawson,LXZ,Tugaut}, there is a common feature. More precisely,  the McKean-Vlasov SDE under investigation enjoys a special structure (i.e., it is of gradient type) so that the number of stationary distributions can be tracked by analyzing in depth the number of roots to a self-consistent algebraic equation. In the past few years, the research on existence, uniqueness as well as non-uniqueness of stationary distributions has advanced greatly for much more general McKean-Vlasov SDEs. In particular, \cite{Butkovsky} and \cite{Wang} showed that the McKean-Vlasov SDE under consideration  has a unique stationary distribution when the law of the solution process is weakly contractive under the weighted total variation distance and the $L^2$-Wasserstein distance, respectively. With the aid of the generalized Banach fixed-point theorem for multivalued maps, \cite{AD} discussed the existence of stationary distributions  for McKean-Vlasov SDEs, where the drift term and the diffusion term are Lipschitz continuous with respect to the spatial variable and the measure variable. Recently, there is also considerable  progress on  the
 non-uniqueness of stationary distributions for McKean-Vlasov SDEs without a specific framework. Particularly,  via Schauder's fixed point theorem,
  \cite{Zhang} furnished sufficient criteria to study the non-uniqueness of stationary distributions for multi-dimensional McKean-Vlasov SDEs.
Under a locally dissipative condition, by means of the  preservation of stochastic order, \cite{QYZ} exploited the multiplicity of stationary distributions for $1$-dimensional McKean-Vlasov SDEs.
As mentioned previously, we know that there is a huge amount of work devoted to
 stationary distributions (including existence, uniqueness, and non-uniqueness)
for  McKean-Vlasov SDEs driven by Brownian
motion.

 Besides the macroscopic influence (e.g., the distribution element),   in certain scenarios,
  a system under consideration might be subject  to  random fluctuations   with discontinuous sample paths instead of continuous
 counterparts. So, it is quite natural and reasonable to consider McKean-Vlasov SDEs driven by pure jump processes. In contrast to McKean-Vlasov SDEs with Brownian motion,
 the exploration on existence, uniqueness, and non-uniqueness associated with McKean-Vlasov SDEs with jumps
 is still in the initial stage and so rather incomplete. To motivate    our present work, we consider the following illustrative
example:
\begin{align}\label{4E}
\d X_t=b(X_t,\mathscr L_{X_t})\,\d t+\d Z_t,
\end{align}
where $(Z_t)_{t\ge0}$ is a   symmetric (rotationally invariant) $\alpha$-stable process in $\R$ with $\alpha\in(1,2)$, and
\begin{align*}
b(x,\mu):=-C_{2-\alpha}\e^{V(x,\mu)}\int_\R\frac{\e^{-V(y,\mu)}V'(\cdot,\mu)(y)}{|x-y|^{\alpha-1}}\,\d y,\quad x\in\R,\,\mu\in\mathscr P_1.
\end{align*}
Hereinabove, for $r\in(0,1)$ and
 $\gamma,\beta>0,$
 $$C_{r}:=\frac{\Gamma((1-r )/2)}{{2^{r}\pi^{1/2}\Gamma(r/2)}}\quad \mbox{ and } \quad
V(x,\mu) :=\gamma x\int_\R y\mu(\d y)-x^4+\beta x^2.  $$
In terms of \cite[Proposition 3.3]{HMW}, $\pi(\d x)=\e^{\gamma m x -x^4+\beta x^2}/C_V\,\d x$ is a stationary distribution of the SDE \eqref{4E}, in which $C_V$ is a normalization constant and $m=\int_\R y \pi(\d y)$. In addition, the following self-consistent  algebraic equation:
\begin{align*}
\int_\R(x-m)\e^{\gamma m x -x^4+\beta x^2}\,\d x=0
\end{align*}
holds true. Thus, by invoking \cite[Theorem 2(1)]{LXZ}, we can conclude that there exists a threshold $\beta_c>0$ such that the SDE \eqref{4E} admits (i) a unique stationary distribution as soon as $\beta\in(0,\beta_c)$; (ii) exactly three stationary distributions in case of $\beta\ge\beta_c$. The previous example convinces us that,  just like McKean-Vlasov SDEs driven by Brownian
motion, the phase transition might occur as far as  McKean-Vlasov SDEs driven by the pure jump L\'{e}vy noise are concerned.  Whilst, the systematic study on existence, uniqueness and non-uniqueness
of McKean-Vlasov SDEs with pure jumps is still vacant. This encourages us to move on and initiate such an uncultivated  subject.

When a  McKean-Vlasov SDE  has a unique stationary distribution, the convergence to the equilibrium  has been widely and intensively investigated in   e.g.  \cite{BSWX,Butkovsky,EGZ,LMW,LWZ,Wangb}.
 However, the corresponding issue will become extremely tough once the underlying McKean-Vlasov SDE has multiple stationary distributions. For the granular media equation with quadratic interaction,
\cite{MR}  provided quantitative convergence rates for initial conditions in a Wasserstein
ball around the stationary solutions via  non-linear versions of the local log-Sobolev inequalities. By linearizing the nonlinear Markov semigroup generated by the distribution flow
associated with the McKean-Vlasov SDE, the local convergence near equilibria was treated in \cite{Zhangb}. For further developments on the convergence to the equilibria, we would like to mention e.g. \cite{Cormier,QYZ,Tugautb,Tugautc} and the references within. Nevertheless,  for L\'{e}vy-driven McKean-Vlasov SDEs with multiple stationary distributions,
the topic on the local convergence to the equilibria has been sparsely investigated. The present work will undoubtedly pave the way for us to carrying out this topic in the future. This can be regarded as another motivation for  our present work.

\subsection{Main results}

Concretely speaking,
in this paper we work on the following McKean-Vlasov SDE with jumps:
\begin{align}\label{E1}
\d X_t=b(X_t,\mathscr L_{X_t})\,\d t+\d Z_t,
\end{align}
where $b:\R^d\times\mathscr P(\R^d)\to\R^d$, and $(Z_t)_{t\ge0}$ is a $d$-dimensional pure jump L\'{e}vy process with the L\'{e}vy measure $\nu(\d z)$
satisfying the following integrable condition: for some $\beta_*\in(0,2],$
\begin{align}\label{E2}
\nu\big( |\cdot|^2\I_{\{|\cdot|\le1\}}\big) +\nu\big(|\cdot|^{\beta_*}\I_{\{|\cdot|>1\}}\big)  <\8.
\end{align}
 Hereinbefore, $\mathscr P(\R^d)$ denotes  the set of probability measures on $\R^d$, and $\nu(f):=\int_{\R^d}f(x)\nu(\d x)$ for a $\nu$-integrable function $f:\R^d\to\R$.
Throughout the paper, we assume that the McKean-Vlasov SDE \eqref{E1} is strongly well-posed under suitable conditions; see, for example,  \cite[Theorem 1]{BLW} and references within for related details.

\subsubsection{{\bf Existence of stationary distributions}}
First of all, we introduce some  notations to be used frequently.
For $p>0$, let $\mathscr P_p(\R^d)=\{\mu\in\mathscr P(\R^d): \mu(|\cdot|^p)<\8\}$, i.e., the collection of probability measures on $\R^d$ with finite $p$-th moment. Below, we shall   write simply $\mathscr P_p$ instead of $\mathscr P_p(\R^d)$ once the state space $\R^d$ is unimportant.  Set $\mathscr P_p^M:=\{\mu\in\mathscr P_p:\mu(|\cdot|^p)\le M\}$ for given $M,p>0$.

To address existence of stationary distributions of \eqref{E1}, the following assumptions are in force.
\begin{enumerate}\it
\item[{\rm(${\bf A}_1$)}]  there exist constants $\beta\in (0,\beta_*]$, $\lambda_1, \lambda_2, C_b, \theta_i >0,\, i=1,3,4,$ and $\theta_2\ge0$ satisfying that  $\theta_1\ge1-\beta/2$, $\theta_2<1+\theta_1$,  $\gamma_1\in[0,1)$,    $\theta_3\in(0,\beta^*]$, and
\begin{align*}
(i)\quad  \beta^*(1-\gamma_1)>\theta_3\theta_4; \quad \mbox{ or } \quad (ii)  \quad \beta^*(1-\gamma_1)=\theta_3\theta_4 \,\, \mbox{ and } \,\, \lambda_1>\lambda_2,
\end{align*}
such that for all $x\in\R^d$ and $\mu\in\mathscr P_{\beta^*}$,
\begin{align}\label{E12}
\<x,b(x,\mu)\>\le C_b-\lambda_1|x|^{1+\theta_1} +\lambda_2 (1+|x|^2)^{\theta_2/2}\mu(|\cdot|^{\theta_3})^{\theta_4},
\end{align}
where  $\beta^*:=\beta+\theta_1-1$  and
 $\gamma_1:= (\beta+\theta_2-2)^+/\beta^*.$

\item[{\rm(${\bf A}_2$)}]for fixed $\mu\in \mathscr P_{\beta^*}$, $(Y_t^{\mu})_{t\ge0}$ solving
the  time-homogenous  SDE:
\begin{align}\label{EW5}
 \d Y_t^{\mu}=b(Y_t^{\mu},\mu)\,\d t+\d Z_t,\quad t>0
 \end{align}
is a $C_b$-Feller process and has a unique  IPM  $\pi^\mu$.

\item[{\rm(${\bf A}_3$)}] for some constant  $M^*>0$, $ \mathscr P_{\beta^*}^{M^*} \ni \mu\mapsto \pi^\mu$ is weakly continuous.
\end{enumerate}

 Our first main result in this paper is as follows.

\begin{theorem}\label{thm1}
Under Assumptions $({\bf A}_1)$, $({\bf A}_2)$   and  $({\bf A}_3)$, \eqref{E1} has a stationary distribution $\pi$ with finite $\beta^*$-th moment $($i.e.,
$\pi(|\cdot|^{\beta^*})<\8$$)$.
\end{theorem}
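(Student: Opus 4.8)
The plan is to recast the existence problem as a fixed point problem and invoke Schauder's fixed point theorem (in its version for locally convex spaces). For each $\mu\in\mathscr P_{\beta^*}$, Assumption $({\bf A}_2)$ yields a unique IPM $\pi^\mu$ of the frozen linear SDE \eqref{EW5}; set $\Phi(\mu):=\pi^\mu$. If $\Phi$ admits a fixed point $\pi$, then the solution of \eqref{EW5} with $\mu=\pi$ started from $\pi$ has constant marginal law $\pi$, hence solves \eqref{E1} and exhibits $\pi$ as a stationary distribution of \eqref{E1}. It thus suffices to produce a convex, weakly compact set of probability measures that is invariant under $\Phi$ and on which $\Phi$ is weakly continuous.

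The core step is a uniform-in-$\mu$ moment estimate. I would apply the generator $\mathscr L^\mu$ of \eqref{EW5} to the Lyapunov function $V_\delta(x):=(\delta+|x|^2)^{\beta/2}$, with a fixed $\delta\ge1$ (which keeps $V_\delta$ smooth with uniformly bounded second derivatives). The local drift part, $\beta(\delta+|x|^2)^{\beta/2-1}\langle x,b(x,\mu)\rangle$, is bounded via \eqref{E12}; the exponent $\beta$ — rather than $\beta^*$ — is dictated by the requirement that the dissipative contribution $-\lambda_1\beta(\delta+|x|^2)^{\beta/2-1}|x|^{1+\theta_1}$ be comparable to $-|x|^{\beta^*}$, recalling $\beta^*=\beta+\theta_1-1$. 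The nonlocal part of $\mathscr L^\mu V_\delta$ is estimated by a second-order Taylor expansion on $\{|z|\le1\}$ (of order $|z|^2$ since $\beta\le2$) and, on $\{|z|>1\}$, by the sub-$\beta_*$ growth of $V_\delta$ together with \eqref{E2} (where $\beta\le\beta_*$ enters); this gives a bound of order $1+|x|^{(\beta-1)^+}$, which is strictly subordinate to $|x|^{\beta^*}$ because $\theta_1>0$. For the measure-dependent term, $\theta_2<1+\theta_1$ forces $\beta+\theta_2-2<\beta^*$, so Young's inequality absorbs it into $(c_0/2)|x|^{\beta^*}$ at the price of a power of $\mu(|\cdot|^{\theta_3})^{\theta_4}$, and then $\theta_3\le\beta^*$ with Jensen's inequality turns this into a power of $M$ whenever $\mu\in\mathscr P_{\beta^*}^M$. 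Collecting terms, for $\mu\in\mathscr P_{\beta^*}^M$ one arrives at a drift estimate $\mathscr L^\mu V_\delta(x)\le K_M-c_0|x|^{\beta^*}$ with $c_0>0$ and $K_M<\8$.

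To pass from this to a bound on $\pi^\mu(|\cdot|^{\beta^*})$ without circular reasoning — a priori the finiteness of this moment is not known — I would run the Krylov--Bogoliubov argument: the drift estimate controls $\frac1T\int_0^T\E_{x_0}|Y^\mu_t|^{\beta^*}\,\d t$, the $C_b$-Feller property in $({\bf A}_2)$ guarantees that any weak limit point of the time averages $\frac1T\int_0^TP^\mu_t(x_0,\cdot)\,\d t$ is an IPM of \eqref{EW5}, Fatou's lemma carries the moment bound over to the limit, and uniqueness of $\pi^\mu$ identifies it with that limit, so $\pi^\mu(|\cdot|^{\beta^*})\le 2K_M/c_0$. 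Bookkeeping the exponents shows $K_M$ has the form $C(1+M^{\rho})$ with $\rho=\theta_3\theta_4/(\beta^*(1-\gamma_1))$. Under $({\bf A}_1)(i)$ one has $\rho<1$, whence $2K_M/c_0\le M$ for all sufficiently large $M$; in the borderline case $({\bf A}_1)(ii)$, $\rho=1$, and a sharper accounting of constants — using that the ratio $(\delta+|x|^2)^{\beta/2-1}|x|^{1+\theta_1}/|x|^{\beta^*}$ tends to $1$ and that $\delta\ge1$ keeps the prefactors from $V_\delta$ at most $1$ — produces a linear bound $\pi^\mu(|\cdot|^{\beta^*})\le(\lambda_2/\lambda_1)M+C'$ with slope $\lambda_2/\lambda_1<1$, so again $\le M$ for large $M$. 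Fixing such an $M=M^*$ (at least as large as the constant in $({\bf A}_3)$) gives $\Phi(\mathscr P_{\beta^*}^{M^*})\subseteq\mathscr P_{\beta^*}^{M^*}$.

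Finally I would apply the fixed point theorem. The set $\mathscr K:=\mathscr P_{\beta^*}^{M^*}$ is convex; the uniform $\beta^*$-moment bound makes it tight, hence weakly relatively compact by Prokhorov's theorem, and it is weakly closed because $x\mapsto|x|^{\beta^*}$ is nonnegative and lower semicontinuous (portmanteau), so $\mathscr K$ is convex and compact in the metrizable topology of weak convergence, sitting inside the locally convex space of finite signed measures. By $({\bf A}_3)$, $\Phi$ is weakly continuous on $\mathscr K$, so Schauder's fixed point theorem provides $\pi\in\mathscr K$ with $\Phi(\pi)=\pi$; by the reduction above, $\pi$ is a stationary distribution of \eqref{E1} with $\pi(|\cdot|^{\beta^*})\le M^*<\8$. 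The difficulty is essentially entirely in the moment estimate: choosing the Lyapunov function, controlling the nonlocal part so that the large-jump contribution remains subordinate to the dissipative term (this is where $\theta_1>0$, $\beta\le\beta_*$ and $\theta_3\le\beta^*$ play their roles), carrying out the exponent bookkeeping so that $({\bf A}_1)(i)$, resp. the critical $({\bf A}_1)(ii)$ with $\lambda_1>\lambda_2$, delivers the self-mapping property, and — a subtle point — establishing finiteness of $\pi^\mu(|\cdot|^{\beta^*})$ in the first place, which is precisely what the $C_b$-Feller property and the uniqueness in $({\bf A}_2)$ are needed for.
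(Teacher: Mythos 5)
Your overall scheme coincides with the paper's: fix the measure, use a Lyapunov estimate plus the Krylov--Bogoliubov construction and Fatou to bound $\pi^\mu(|\cdot|^{\beta^*})$ uniformly on $\mathscr P_{\beta^*}^{M}$, show the map $\mu\mapsto\pi^\mu$ is a self-map of a ball $\mathscr P_{\beta^*}^{M^*}$, and invoke a Schauder-type fixed point theorem on a convex weakly compact set. (The paper, rather than using the Schauder--Tychonoff version on signed measures with the weak topology, embeds $\mathscr P_{\beta^*/2}$ into a Banach space of signed measures with a Kantorovich--Rubinstein norm and uses the Banach-space Schauder theorem; on the compact ball these topologies coincide, so this is a presentational difference, not a substantive one.)

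However, your large-jump estimate has a genuine gap, and it is precisely the point the authors flag as one of the technical contributions. You claim that, for $|z|>1$, the increment of $V_\delta$ together with \eqref{E2} gives a contribution to $\mathscr L^\mu V_\delta$ of order $1+|x|^{(\beta-1)^+}$. This requires $\nu\big(|\cdot|\,\I_{\{|\cdot|>1\}}\big)<\infty$ (for $\beta>1$ one has $|V_\delta(x+z)-V_\delta(x)|\lesssim |x|^{\beta-1}|z|+|z|^\beta$, and the $|x|^{\beta-1}|z|$ piece needs a first moment of $\nu$ on large jumps; for $\beta\le1$ the gradient bound gives $|V_\delta(x+z)-V_\delta(x)|\lesssim|z|$, again needing a first moment), but \eqref{E2} only provides $\nu\big(|\cdot|^{\beta_*}\I_{\{|\cdot|>1\}}\big)<\infty$ with $\beta_*$ possibly in $(0,1)$. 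The only estimate available in general uses $|a^{\beta/2}-b^{\beta/2}|\le|a-b|^{\beta/2}$ to get $|V_\delta(x+z)-V_\delta(x)|\le(2|x||z|+|z|^2)^{\beta/2}\le 2^{\beta/2}|x|^{\beta/2}|z|^{\beta/2}+|z|^\beta$, whose integral over $\{|z|>1\}$ contributes $2^{\beta/2}\nu\big(|\cdot|^{\beta/2}\I_{\{|\cdot|>1\}}\big)\,|x|^{\beta/2}$ plus a constant. Since $({\bf A}_1)$ permits $\theta_1=1-\beta/2$, in which case $\beta^*=\beta/2$ exactly, this term is \emph{not} strictly subordinate to $|x|^{\beta^*}$: it competes with the dissipative term at the same order. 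Your assertion that $\theta_1>0$ forces subordination is false here; the relevant condition would be $\theta_1>1-\beta/2$, which $({\bf A}_1)$ does not require.

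The paper resolves this by splitting the jumps at a large threshold $l>1$ rather than at $1$: the compensated term for $1<|z|\le l$ contributes $\beta V_{\beta-1}(x)\,|\nu(\cdot\,\I_{\{1<|\cdot|\le l\}})|$, which is of order $|x|^{(\beta-1)^+}$ and is absorbed by Young's inequality (at the cost of a constant depending on $l$), while the uncompensated part over $\{|z|>l\}$ contributes $2^{\beta/2}\nu(|\cdot|^{\beta/2}\I_{\{|\cdot|>l\}})V_{\beta^*}(x)$, whose coefficient can be made arbitrarily small by taking $l$ large, and thus dominated by the dissipation $-\lambda_1\beta h(r_0^2)^{(1+\theta_1)/2}V_{\beta^*}(x)$. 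This splitting is essential for the borderline parameter values allowed in $({\bf A}_1)$, and is what makes both the self-mapping in case $(i)$ and the critical case $(ii)$ (with $\lambda_1>\lambda_2$) close. Without it, your argument would only cover the non-sharp regime $\theta_1>1-\beta/2$ with $\nu$ having a finite first moment on large jumps.
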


 \begin{remark}\label{Rem1.2}  We make some comments on Assumptions $({\bf A}_1)$, $({\bf A}_2)$   and  $({\bf A}_3)$.
 \begin{itemize}
 \item[{\rm(1)}] \eqref{E12} is the Lyapunov condition, which, along with Assumption (${\bf A}_2$),  plays a crucial role in showing that the mapping $\Lambda_\cdot$ defined in \eqref{S10} below leaves invariantly  a suitable subset $\mathscr P^{M^*}_{\beta^*}$ of $\mathscr P_{\beta^*}$, where the quantity $M^*>0$   given in Assumption (${\bf A}_3$)  has an explicit expression; see  Lemma \ref{lem2} below.
   \item[{\rm(2)}] The dissipative property in \eqref{E12} comes from the negative term $-\lambda_1|x|^{1+\theta_1}$, and so the term $\lambda_2 (1+ |x|^2)^{\theta_2/2}\mu(|\cdot|^{\theta_3})^{\theta_4}$ can be regarded as a lower-order perturbation at least when $|x|$ is  large enough. In this sense, the prerequisite $\theta_2<1+\theta_1$ is reasonable.
       By applying It\^o's formula to $V_\beta(x):=(1+|x|^2)^{{\beta}/{2}}$ for all $x\in\R^d$, we obtain from \eqref{E12} that the   terms $-\lambda_1\beta(1+|x|^2)^{\frac{1}{2}(\beta-2)}|x|^{1+\theta_1}$ and $ \lambda_2\beta(1+|x|^2)^{\frac{1}{2}(\beta+\theta_2-2)} \mu(|\cdot|^{\theta_3})^{\theta_4}$ will show up, where the former determines  the moment order (i.e., $\beta^*=\beta+\theta_1-1$)
of  the IPM, and the latter one can be dominated by the linear combination of $V_{\beta^*}(x)$ and $\mu(|\cdot|^{\beta^*})^{{\theta_3\theta_4}/({\beta^*(1-\gamma_1)})}$ by making use of the Young inequality. To show that a suitable subspace of $\mathscr P_{\beta^*}$ is invariant, the order  $\frac{\theta_3\theta_4}{\beta^*(1-\gamma_1)}$ should be less than or equal to $1$. Based on this, $(i)$ or $(ii)$ is imposed, where, for the critical case, the additional condition $\lambda_1>\lambda_2$ is necessary.

 \item[{\rm(3)}]
The  examinations of Assumptions (${\bf A}_2$) and (${\bf A}_3$) will be detailed in Section \ref{sec2}. In addition, Assumption (${\bf A}_3$)
is enforced for the applicability of Schauder's fixed point theorem, which  is the main tool
 to prove Theorem \ref{thm1}.
Generally speaking, the statement that the mapping $\mathscr P_{\beta^*}\ni\mu\mapsto \pi^\mu$ is
 weakly continuous is not true as shown by Example \ref{exm} since, in general,  the weak convergence does not imply the convergence under the Wasserstein distance.
 Nevertheless, Assumption $({\bf A}_3)$  is valid when $\mathscr P_{\beta^*}$ is replaced by the subset  $\mathscr P_{\beta^*}^{M^*}$.\end{itemize}
\end{remark}

\subsubsection{{\bf Multiplicity of stationary distributions}}
In this subsection, with regarding to \eqref{E1},
 we turn to focus on the issue on existence of multiple stationary distributions,   and, most importantly,  aim at
providing some explicit  conditions to judge the multiplicity.

In addition to  Assumptions $({\bf A}_1)$, $({\bf A}_2)$ and  $({\bf A}_3)$, we further suppose that
\begin{enumerate} \it
\item[{\rm(${\bf A}_4$)}]there exist
 $y\in\R^d$, $\vv>0$,
  and a continuous  function $g_{\vv}:[0,\8)\times[0,\8)\to\R$    such that for all $x\in\R^d$ and  $\mu\in\mathscr P_{\beta^*} $,
\begin{equation}\label{S6}
\begin{split}
&\beta(\vv+ |x-y|^2)^{\frac{1}{2}(\beta-2)} \<x-y,b(x,\mu)\>
+ 2^{\frac{\beta}{2}}\nu(|\cdot|^{\frac{\beta}{2}}\I_{\{|\cdot|>1\}})|x-y|^{\frac{\beta}{2}}\\
&\quad+\frac{1}{2}\beta\vv^{\frac{1}{2}(\beta-2)}
\nu(|\cdot|^2\I_{\{|\cdot|\le 1\}})+\nu(|\cdot|^{\beta}\I_{\{|\cdot|>1\}})\\
&\le -g_{\vv}(|x-y|,\mu(|y-\cdot|)),
\end{split}
\end{equation} where $\beta\in (0,\beta_*]$ is given in Assumption {\rm(${\bf A}_1$)}.
Furthermore, we assume that, for each fixed $r_1\in[0,\8) $,   $r_2\mapsto g_{\vv }(r_1,r_2)$ is decreasing; for fixed $r_2\in[0,\8)$,  $r_1\mapsto g_{\vv }(r_1,r_2)$ is strictly convex; and for some $M_*>0$,
$g_{\vv}(r,M_*)>0$ for all $r\ge M_*$.
\end{enumerate}

Under Assumptions $({\bf A}_1)$, $({\bf A}_2)$ and $({\bf A}_3)$, Theorem \ref{thm1} has shown us   that \eqref{E1} has a stationary distribution
in a suitable subspace $\mathscr P_{\beta^*}^{M^*}$.  If  Assumption  $({\bf A}_4)$ is imposed additionally, it will be revealed  that  \eqref{E1} might have a stationary distribution in a subspace of  $\mathscr P_{\beta^*}^{M^*}$. As indicated  in \cite[Proposition 1.2]{Tugaut} which is  concerned with McKean-Vlasov SDEs driven by Brownian
motion, stationary distributions  could concentrate on the local minimizers of confinement  potentials  involved.
In the spirit of \cite[Theorem 3.1]{Zhang}, we intend to prove  that each stationary distribution of \eqref{E1} should be totally distinct when the distance of centers for each stationary distribution
  with proper concentration is large.
The aforementioned assertion is formulated  as   the following theorem.

\begin{theorem}\label{thm3}
Assume that Assumptions $({\bf A}_1)$, $({\bf A}_2)$, $({\bf A}_3)$ and $({\bf A}_4)$ hold. Then, we have the following statements:
\begin{itemize}
\item[{\rm(i)}]  \eqref{E1} has a stationary distribution in
$\mathscr P_{\beta^*,y}^{ M^*,M_*}: = \{\mu\in\mathscr P_{\beta^*}^{M^*} :\mu(|y-\cdot|)\le M_*\} $,  where $M^*>0 $ is to be given explicitly  in Lemma $\ref{lem2}$ below, and $M_*>0$ is stipulated in $({\bf A}_4)$;

\item[{\rm(ii)}] If
$({\bf A}_4)$ holds true respectively for some  $y_i\in\R^d$, $1\le i\le k$, and $$ M_*<\frac{1}{4}\min_{1\le i\neq j\le k}|y_i-y_j|,$$ then \eqref{E1} has at least $k$-distinct stationary distributions in
$\mathscr P_{\beta^*, y_i}^{ M^*,M_*}, 1\le i\le k$,  respectively.
\end{itemize}
\end{theorem}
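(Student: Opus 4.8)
The plan is to prove (i) by re-running the Schauder fixed point argument from the proof of Theorem~\ref{thm1}, but on the smaller set $\mathscr P_{\beta^*,y}^{M^*,M_*}$ in place of $\mathscr P_{\beta^*}^{M^*}$. Since $\mathscr P_{\beta^*,y}^{M^*,M_*}\subseteq\mathscr P_{\beta^*}^{M^*}$, Lemma~\ref{lem2} already gives $\Lambda_\mu=\pi^\mu\in\mathscr P_{\beta^*}^{M^*}$ and, by $({\bf A}_3)$, the weak continuity of the map $\Lambda_\cdot$ from \eqref{S10} on it; moreover $\mathscr P_{\beta^*,y}^{M^*,M_*}$ is plainly convex and, being a weakly closed subset of the weakly compact $\mathscr P_{\beta^*}^{M^*}$ (weak closedness of the constraint $\mu(|y-\cdot|)\le M_*$ following from the weak lower semicontinuity of $\mu\mapsto\mu(|y-\cdot|)$), it is weakly compact; it is nonempty, e.g. $\delta_y\in\mathscr P_{\beta^*,y}^{M^*,M_*}$ as soon as $|y|^{\beta^*}\le M^*$. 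So the only new point to verify --- and the one Assumption $({\bf A}_4)$ is built for --- is that $\Lambda_\cdot$ leaves $\mathscr P_{\beta^*,y}^{M^*,M_*}$ invariant, i.e.\ $\pi^\mu(|y-\cdot|)\le M_*$ whenever $\mu(|y-\cdot|)\le M_*$. Granting this, Schauder's theorem yields $\pi=\pi^\pi\in\mathscr P_{\beta^*,y}^{M^*,M_*}$, which is a stationary distribution of \eqref{E1} exactly as in the proof of Theorem~\ref{thm1}.

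For the invariance I would use a Lyapunov estimate recentred at $y$. Fix $\mu\in\mathscr P_{\beta^*,y}^{M^*,M_*}$, put $V_{\beta,y}(x):=(\vv+|x-y|^2)^{\beta/2}$, and apply It\^o's formula to $V_{\beta,y}(Y_t^\mu)$ for $(Y_t^\mu)_{t\ge0}$ solving \eqref{EW5}. The drift part of the generator $\mathcal L^\mu V_{\beta,y}(x)$ is precisely $\beta(\vv+|x-y|^2)^{\frac12(\beta-2)}\<x-y,b(x,\mu)\>$ --- the first term on the left of \eqref{S6} --- while the jump part is dominated by the remaining terms there, using for $|z|>1$ the elementary bound $V_{\beta,y}(x+z)-V_{\beta,y}(x)\le 2^{\beta/2}|x-y|^{\beta/2}|z|^{\beta/2}+|z|^{\beta}$ and for $|z|\le1$ the second-order bound $V_{\beta,y}(x+z)-V_{\beta,y}(x)-\<\nabla V_{\beta,y}(x),z\>\le\tfrac12\beta\vv^{\frac12(\beta-2)}|z|^2$ (valid since $\beta\le2$). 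Hence \eqref{S6} gives $\mathcal L^\mu V_{\beta,y}(x)\le-g_\vv(|x-y|,\mu(|y-\cdot|))$, and since $r_2\mapsto g_\vv(r_1,r_2)$ is decreasing and $\mu(|y-\cdot|)\le M_*$, also $\mathcal L^\mu V_{\beta,y}(x)\le-g_\vv(|x-y|,M_*)$ for every $x\in\R^d$. Integrating this inequality against the invariant measure $\pi^\mu$ and invoking the Foster--Lyapunov type estimate for $\pi^\mu$ --- the same one used, for the function centred at the origin, in the proof of Lemma~\ref{lem2} --- gives $\pi^\mu\big(g_\vv(|\cdot-y|,M_*)\big)\le0$. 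Finally, as $r_1\mapsto g_\vv(r_1,M_*)$ is strictly convex and $\pi^\mu(|y-\cdot|)<\8$, Jensen's inequality yields $g_\vv\big(\pi^\mu(|y-\cdot|),M_*\big)\le0$; since $g_\vv(r,M_*)>0$ for all $r\ge M_*$, this forces $\pi^\mu(|y-\cdot|)<M_*$, which is the required invariance, completing the proof of (i).

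Part (ii) then follows quickly: applying (i) to each $y_i$, $1\le i\le k$, produces stationary distributions $\pi_i\in\mathscr P_{\beta^*,y_i}^{M^*,M_*}$. If $\pi_i=\pi_j=:\pi$ for some $i\neq j$, then $\pi\in\mathscr P_{\beta^*,y_i}^{M^*,M_*}\cap\mathscr P_{\beta^*,y_j}^{M^*,M_*}$, whence
\[
|y_i-y_j|\le\int_{\R^d}\big(|y_i-x|+|x-y_j|\big)\,\pi(\d x)=\pi(|y_i-\cdot|)+\pi(|y_j-\cdot|)\le2M_*<\tfrac12|y_i-y_j|,
\]
which is impossible since $y_i\neq y_j$ (the hypothesis $M_*<\frac14\min_{i\neq j}|y_i-y_j|$ in particular ensures the $y_i$ are pairwise distinct). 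Hence $\pi_1,\dots,\pi_k$ are pairwise distinct and \eqref{E1} has at least $k$ stationary distributions.

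The step I expect to be the main obstacle is the rigorous passage from the pointwise inequality $\mathcal L^\mu V_{\beta,y}\le-g_\vv(|\cdot-y|,M_*)$ to the integrated estimate $\pi^\mu(g_\vv(|\cdot-y|,M_*))\le0$: because $V_{\beta,y}$ is unbounded, $\mathcal L^\mu V_{\beta,y}$ is unbounded below, and the L\'evy measure only satisfies \eqref{E2}, one must first deduce $\pi^\mu(V_{\beta,y})<\8$ and $\pi^\mu(|y-\cdot|)<\8$ from the Lyapunov structure and then pass to the limit in a localized/truncated It\^o formula with uniform control of the jump term over $\{|z|\le1\}$ and $\{|z|>1\}$. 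This is, however, the same technical core already handled in the proof of Theorem~\ref{thm1} (and Lemma~\ref{lem2}), merely recentred at $y$; the remaining steps are either direct transcriptions of that proof or elementary.
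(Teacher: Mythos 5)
Your overall architecture matches the paper's: Schauder on the convex, weakly compact subset $\mathscr P_{\beta^*,y}^{M^*,M_*}$, with the new content being the invariance $\Lambda_\mu(|y-\cdot|)\le M_*$ obtained from $({\bf A}_4)$ via convexity of $g_\vv$. Both parts of your argument are sound, but you deviate from the paper's proof in two places, and it is worth spelling out the trade-offs.

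For the invariance step, the paper does \emph{not} integrate $\mathscr L_\mu V_{\vv,\beta,y}\le -g_\vv$ against $\pi^\mu$. Instead it fixes $Y_0^\mu=y$, runs the It\^o inequality up to finite $t$, divides by $t$, applies Jensen twice (over $\E$ and over the time average $\frac1t\int_0^t\cdot\,\d s$) to obtain
$g_\vv\bigl(\frac1t\int_0^t\E|Y_s^\mu-y|\,\d s,\mu(|y-\cdot|)\bigr)\le \vv^{\beta/2}/t$,
and then lets $t\to\infty$, identifying $\Lambda_\mu=\pi^\mu$ as the weak limit of the Cesaro averages $\frac1t\int_0^t(\delta_yP_s^\mu)\,\d s$. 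This sidesteps exactly the issue you flag as your ``main obstacle'': it never invokes $\pi^\mu(\mathscr L_\mu V_{\vv,\beta,y})=0$ for the unbounded Lyapunov function, so no separate truncation/localization step is required beyond what is already in Lemma~\ref{lem1}. Your route --- establish $\pi^\mu(g_\vv(|\cdot-y|,M_*))\le0$ by a Foster--Lyapunov integration and then apply Jensen to $\pi^\mu$ --- does reach the same conclusion, but it carries the extra burden you describe and is not literally the ``same technical core'' as Lemma~\ref{lem1}, which also works through the Cesaro average rather than through the invariant-measure identity. It would be cleaner, and closer to the source, to use the finite-$t$ Cesaro argument.

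For part (ii), your triangle-inequality argument ($|y_i-y_j|\le\pi(|y_i-\cdot|)+\pi(|y_j-\cdot|)\le2M_*$) is both correct and slightly more elementary than the paper's Chebyshev argument (which shows each $\mu_i$ puts mass $>1/2$ in the ball of radius $|y_i-y_j|/2$ around $y_i$); yours only needs $M_*<\tfrac12|y_i-y_j|$ whereas Chebyshev needs $M_*<\tfrac14|y_i-y_j|$, so yours is actually sharper under the stated hypothesis. One small remark: your observation that $\delta_y\in\mathscr P_{\beta^*,y}^{M^*,M_*}$ ``as soon as $|y|^{\beta^*}\le M^*$'' quietly raises the nonemptiness of the Schauder set; the paper implicitly assumes nonemptiness too (and starts the Lyapunov flow from $Y_0^\mu=y$, which is legitimate regardless of whether $\delta_y$ satisfies the moment cap), so this is a shared implicit assumption rather than a gap you introduced.
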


\begin{remark} We make some comments on Assumption $({\bf A}_4)$, which in terminology  is termed as the locally  dissipative condition when the noise process $(Z_t)_{t\ge0}$ is a $d$-dimensional Brownian motion; see, for instance, \cite[Definition 2.3]{QYZ}.
\begin{itemize}
\item[{\rm (1)}]
Since, for given $\beta\in(0,\beta_*]\subset (0,2]$ and $y\in\R^d$,  $\R^d\ni x\mapsto V_{\beta,y}(x):=|x-y|^\beta$ is not a $C^2$-function, we modify  it to  a smooth one $V_{\vv,\beta,y}(x):=(\vv+|x-y|^2)^{\frac{1}{2}\beta}$ so that the It\^o formula can be applied to   $(V_{\vv,\beta,y}(Y_t^\mu))_{t\ge0}$. At the first sight,
the right hand side of \eqref{S6} is unusual. Nonetheless, it arises naturally once we estimate the term $(\mathscr L_\mu V_{\vv,\beta,y})(x)$, where $\mathscr L_\mu$ is the infinitesimal generator of $(Y_t^\mu)_{t\ge0}$.

\item[{\rm(2)}] Seemingly, \eqref{S6} is a bit involved.  By imitating \eqref{E12},
 we can formally assume that $b$ satisfies the following condition: for some constants $\lambda_1,\lambda_2,\theta_i>0,i=1,2,3,4,$
 \begin{align*}
 \<x-y,b(x,\mu)\>\le-\lambda_1|x-y|^{1+\theta_1} +\big|\mbox{lower-order term}\big|+\lambda_2|x-y|^{\theta_2}\mu(|\cdot-y|^{\theta_3})^{\theta_4}.
 \end{align*}
 In this case, \eqref{S6}  can be checked easily whenever $\lambda_1,\lambda_2,\theta_i>0,i=1,2,3,4,$ are chosen suitably.
 However, the ``lower-order term'' might also provide dissipativity.   As shown in Examples \ref{exm} and \ref{exm2},
the dissipativity provided by the  ``lower-order term'' cannot be   neglected. Otherwise, it will be essentially  difficult to examine the condition (ii) in Theorem \ref{thm3}.
\end{itemize}
\end{remark}

Below, we provide two illustrative examples, which are adapted respectively from \cite[Example 3.5]{Zhang} and \cite[Example 3.6]{Zhang},
 to explain  the multiplicity  of stationary distributions for McKean-Vlasov SDEs with L\'{e}vy noise.
\begin{example}\label{exm}
Consider the McKean-Vlasov SDE on $\R$:
\begin{align}\label{exm-}
\d X_t=-\lambda X_t(X_t-a_1)(X_t-a_2)\,\d t-\kk \int_\R(X_t-y)\mathscr L_{X_t}(\d y)\,\d t+ \d Z_t,
\end{align}
where $a_1,a_2\in\R$ with $a_1a_2<0$, $\lambda,\kk>0$, and $(Z_t)_{t\ge0}$ is a $1$-dimensional symmetric $\alpha$-stable process with $\alpha\in(1,2)$. Assume that there exist
$(\kk,\lambda,\vv)\in(0,\8)^3$, $\beta\in(1,\alpha)$ and  $r_0\in(0,(|a_1|\wedge|a_2|)/4)$ such that
\begin{align}\label{-WE}
\kk/\lambda\ge  1+\frac{2 (a_1^2 - a_1a_2+a_2^2)}{ (\beta-1)(2+\beta)},
\end{align}
and
\begin{equation}\label{WE2}
\begin{split}
&   \frac{1}{\lambda\beta}\Big( \kappa \beta r^\beta_0+2^{\frac{\beta}{2}}\nu(|\cdot|^{\frac{\beta}{2}}\I_{\{|\cdot|>1\}})r^{\frac{\beta}{2}}_0 +\vv^{\frac{1}{2}\beta}\beta ( \lambda (a_1^2\vee a_2^2 -a_1a_2) +\kk )\\
&\quad\quad+ \nu(|\cdot|^{\beta}\I_{\{|\cdot|>1\}})+ \frac{\beta }{2 }\vv^{\frac{1}{2}\beta-1}\nu(|\cdot|^2\I_{\{|\cdot|\le 1\}})\Big)\\
&\le   r^\beta_0 [(r_0-|a_1|\wedge|a_2|)(r_0-|a_1-a_2|)+(\kappa/\lambda -1)].
\end{split}
\end{equation}
Then,  \eqref{exm-} has at least three stationary distributions. Note that,  \eqref{WE2} is valid as long as the intensity of the L\'{e}vy measure $\nu$ associated with $(Z_t)_{t\ge0}$ is sufficiently small since   $\vv$ can be chosen to be  small enough. Additionally,  \eqref{-WE} shows that the larger $\kk$ (i.e., the intensity of interaction) is beneficial to the occurrence of multiple stationary distributions.
\end{example}

Furthermore, a multidimensional example is established to show the multiplicity of stationary distributions.

\begin{example}\label{exm2}
Consider the following McKean-Vlasov SDE on $\R^d $:
\begin{equation}\label{WQ1}
\begin{split}
\d X_t&=-\frac{1}{2}\lambda\big((X_t-y_1)|X_t-y_2|^2+(X_t-y_2)|X_t-y_1|^2\big)\,\d t\\
&\quad-\kk \int_{\R^d}(X_t-y)\mathscr L_{X_t}(\d y)\,\d t+ \d Z_t,
\end{split}
\end{equation}
where $\lambda,\kk>0,$  $y_1,y_2\in\R^d,$  and $(Z_t)_{t\ge0}$ is a $d$-dimensional symmetric $\alpha$-stable process with $\alpha\in(1,2)$. It is easy to see that the leading term  of the drift  enjoys   a symmetric structure.
Assume that there exist
$(\kk,\lambda,\vv)\in(0,\8)^3$, $\beta\in(1,\alpha)$ and $r_0\in(0, |y_1-y_2|/4)$ such that
\begin{align} \label{EQ1}
\kk/\lambda\ge \vv+ \frac{(\beta^2+\beta+16)|y_1-y_2|^2 }{16(\beta+2)(\beta-1) },
\end{align}
and
\begin{equation}\label{WQ2}
\begin{split}
 &\frac{1}{\lambda\beta}\Big(\lambda\beta \vv   r_0^\beta+ 2^{\frac{\beta}{2}}\nu(|\cdot|^{\frac{\beta}{2}}\I_{\{|\cdot|>1\}})r_0^{\frac{\beta}{2}}+ \lambda\beta\vv^{\frac{\beta}{2}}\Big(\frac{1}{2}|y_1-y_2|^2+\frac{\kk}{\lambda}\Big)\\
&\quad\quad+\nu(|\cdot|^{\beta}\I_{\{|\cdot|>1\}}) +\frac{1}{2}\beta\vv^{\frac{1}{2}(\beta-2)}
\nu(|\cdot|^2\I_{\{|\cdot|\le 1\}})\Big) \\
&\le   r_0^\beta(r_0-|y_1-y_2|)(r_0-|y_1-y_2|/2).
\end{split}
\end{equation}
Thus, \eqref{WQ1} admits at least two stationary distributions.
\end{example}

\subsubsection{{\bf Uniqueness of stationary distributions}}
In this subsection, as regards  the SDE \eqref{E1},  we address  the problem  on uniqueness of stationary distribution.
Denote by $(P_t^{\mu})_{t\ge0}$ the Markov semigroup of the process $(Y_t^\mu)_{t\ge0}$ solving  \eqref{EW5}. For any $U\ge1$ and  $\mu_1,\mu_2\in\mathscr P$ satisfying $\mu_1(U)+\mu_2(U)<\8$,
 let $\|\mu_1-\mu_2\|_U$ be the weighted total variation distance defined by
$$\|\mu_1-\mu_2\|_U=\sup_{|\psi|\le U}|\mu_1(\psi)-\mu_2(\psi)|.
$$  Given  $r>0$, we set $V_r(x):=(1+|x|^2)^{r/2}$.
For the uniqueness, we need to impose additionally  two assumptions below. Let $\beta\in (0,\beta_*]$ as in Assumption {\rm(${\bf A}_1 $)}.
 \begin{enumerate}\it
\item[{\rm(${\bf A}_5 $)}]
 there exist constants $\beta_0\in (0, \beta\wedge\beta^*)$ and $K_1 >0$ such that
for all $t>0$, $\mu_1,\mu_2\in\mathscr P_{\beta_0}$ and $\mu\in\mathscr P_{\beta^*}^{M^*}$,
\begin{align}\label{UT}
\|\mu_1P_t^\mu-\mu_2P_t^\mu\|_{U}\le C_{M^*}\e^{-c_{M^*}t}\|\mu_1-\mu_2\|_{U},
\end{align}
and that for $x\in \R^d$ and $\mu_1,\mu_2\in \mathscr P_{\beta^*}^{M^*}$,
\begin{align}\label{WT5}
|b(x,\mu_1)-b(x,\mu_2)|\le K_1 V_{\beta\wedge \beta^*-\beta_0}(x)\|\mu_1-\mu_2\|_U,
\end{align}
where $\mu_0P_t^\mu$ denotes the law of $Y_t^\mu$ with $\mathscr L_{Y_0^\mu}=\mu_0\in\mathscr P_{\beta_0}$,
$U(x):=V_{\beta_0}(x)$, and $C_{M^*}, c_{M^*}>0$ depend on $M^*$.

\item[{\rm(${\bf A}_6 $)}] for some $\theta\in(0,1)$ and  any $p>1$, there is a locally integrable function $\varphi_p:=\varphi_{p,\theta, M^*}:[0,\8)\to[0,\8)$
 such that for  any $t>0$, $x\in \R^d$, $\mu\in\mathscr P_{\beta^*}^{M^*}$ and  $f\in C_b(\R^d)$,
\begin{align*}
|\nn P_t^{\mu}f|(x)\le \varphi_p(t) (1\wedge t)^{-\theta}  (P_t^{\mu}|f|^p)^{\frac{1}{p}}(x).
\end{align*}
\end{enumerate}

 The following theorem demonstrates that \eqref{E1} has a unique stationary distribution (which is also called an equilibria in literature), and reveals the convergence to the equilibria
  as long as the interaction intensity (i.e.,  $K_1 $  in  \eqref{WT5}) is tiny.

 \begin{theorem}\label{thm4}
Assume that Assumptions $({\bf A}_1)$, $({\bf A}_2)$,
$({\bf A}_3)$, $({\bf A}_5)$ and $({\bf A}_6)$  hold.
Then, there exists a constant $K^*>0$ such that,  if $K_1$ in \eqref{WT5} satisfies $K_1\in(0,K^*]$,  \eqref{E1} has a unique stationary distribution $\pi\in \mathscr P_{\beta^*}$;
and moreover there exist constants $C,\lambda>0$ such that for any $t\ge0$ and
$\mu\in\mathscr P_{\beta}$,
\begin{align}\label{WY3}
\|P_t^*\mu-\pi\|_U\le C\e^{-\lambda t}\|\mu-\pi\|_U,
\end{align}
where $P_t^*\mu:=\mathscr L_{X_t}$  with
 the initial distribution $\mathscr L_{X_0}=\mu.$
\end{theorem}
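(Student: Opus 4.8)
The plan is to reduce everything to a contraction statement for the frozen-IPM map. Recall that $\pi$ is a stationary distribution of \eqref{E1} exactly when it is a fixed point of $\Lambda:\mu\mapsto\pi^{\mu}$, where $\pi^{\mu}$ is the unique IPM of \eqref{EW5} provided by $({\bf A}_2)$; and by Lemma \ref{lem2} the map $\Lambda$ sends $\mathscr P_{\beta^*}^{M^*}$ into itself. Existence of a fixed point in $\mathscr P_{\beta^*}^{M^*}$ is already given by Theorem \ref{thm1}, so the task is to show that, when $K_1$ is small, $\Lambda$ is a strict contraction on $\mathscr P_{\beta^*}^{M^*}$ for the distance $\|\cdot\|_U$ with $U=V_{\beta_0}$; this forces the fixed point to be unique in $\mathscr P_{\beta^*}^{M^*}$. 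To promote this to uniqueness in all of $\mathscr P_{\beta^*}$, I would run the self-consistency bootstrap underlying Lemma \ref{lem2}: applying It\^o's formula to $V_{\beta^*}$ under $\pi^{\mu}=\mu$ and using the Lyapunov inequality \eqref{E12} together with condition $(i)$ or $(ii)$ of $({\bf A}_1)$ shows that any stationary distribution lying in $\mathscr P_{\beta^*}$ automatically satisfies $\mu(|\cdot|^{\beta^*})\le M^*$, hence lies in $\mathscr P_{\beta^*}^{M^*}$.

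For the contraction, fix $\mu_1,\mu_2\in\mathscr P_{\beta^*}^{M^*}$ and $t>0$, and use $\pi^{\mu_i}P_t^{\mu_i}=\pi^{\mu_i}$ to split
\begin{align*}
\pi^{\mu_1}-\pi^{\mu_2}=\big(\pi^{\mu_1}P_t^{\mu_1}-\pi^{\mu_2}P_t^{\mu_1}\big)+\big(\pi^{\mu_2}P_t^{\mu_1}-\pi^{\mu_2}P_t^{\mu_2}\big).
\end{align*}
The first bracket is bounded by $C_{M^*}\e^{-c_{M^*}t}\|\pi^{\mu_1}-\pi^{\mu_2}\|_U$ via \eqref{UT} (applicable since $\pi^{\mu_i}\in\mathscr P_{\beta^*}^{M^*}\subset\mathscr P_{\beta_0}$ by Lemma \ref{lem2}). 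For the second bracket I would use the Duhamel identity $P_t^{\mu_1}f-P_t^{\mu_2}f=\int_0^t P_s^{\mu_2}\big((b(\cdot,\mu_1)-b(\cdot,\mu_2))\cdot\nn P_{t-s}^{\mu_1}f\big)\,\d s$, which is legitimate because $\mathscr L_{\mu_1}-\mathscr L_{\mu_2}$ is the first-order operator $(b(\cdot,\mu_1)-b(\cdot,\mu_2))\cdot\nn$ and the endpoint singularities are integrable since $\theta<1$ in $({\bf A}_6)$; pairing with $\pi^{\mu_2}$, using $\pi^{\mu_2}P_s^{\mu_2}=\pi^{\mu_2}$, then \eqref{WT5}, the gradient estimate in $({\bf A}_6)$ with some $p>1$ chosen so that $p\beta_0<\beta^*$, and the Lyapunov moment bound $\sup_{t\ge0}P_t^{\mu}V_{p\beta_0}\lesssim V_{\beta_0}$ together with $\pi^{\mu_2}(V_{\beta^*})\le M^*$, one gets $|\pi^{\mu_2}(P_t^{\mu_1}\psi-P_t^{\mu_2}\psi)|\le K_1 C(M^*)\|\mu_1-\mu_2\|_U\int_0^t\varphi_p(t-s)(1\wedge(t-s))^{-\theta}\,\d s$ for all $|\psi|\le U$. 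Fixing $T$ with $C_{M^*}\e^{-c_{M^*}T}\le\tfrac12$ and absorbing, this yields $\|\Lambda\mu_1-\Lambda\mu_2\|_U\le 2K_1 I(T)\|\mu_1-\mu_2\|_U$ with $I(T):=C(M^*)\int_0^T\varphi_p(u)(1\wedge u)^{-\theta}\,\d u<\8$, so $K^*:=(4I(T))^{-1}$ gives a contraction for $K_1\in(0,K^*]$ and hence uniqueness of $\pi\in\mathscr P_{\beta^*}$.

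For the convergence \eqref{WY3}, write $\mu_t:=P_t^*\mu=\mathscr L_{X_t}$ and use that $P_t^*$ is a (nonlinear) semigroup with $P_t^*\pi=\pi=\pi P_t^\pi$. Viewing the nonlinear flow as the linear time-inhomogeneous evolution of \eqref{EW5} along the fixed curve $(\mu_r)_{r\le t}$, I would decompose, for $t\in[0,T]$, $\mu_t-\pi=(\mu P_t^\pi-\pi)+(\mu_t-\mu P_t^\pi)$, bound the first term by \eqref{UT}, and bound the second by the time-dependent Duhamel identity $\mu_t-\mu P_t^\pi=\int_0^t\mu_s\big((b(\cdot,\mu_s)-b(\cdot,\pi))\cdot\nn P_{t-s}^\pi\big)\,\d s$, estimated exactly as in the previous paragraph. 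Writing $\phi(t):=\|\mu_t-\pi\|_U$ this gives, for $t\le T$, $\phi(t)\le C_{M^*}\e^{-c_{M^*}t}\phi(0)+K_1 I(T)\sup_{[0,T]}\phi$; first bounding $\sup_{[0,T]}\phi\le 2C_{M^*}\phi(0)$ (possible once $K_1 I(T)\le\tfrac12$), then enlarging $T$ and shrinking $K_1$ so that $\phi(T)\le\tfrac12\phi(0)$, and finally iterating over the windows $[nT,(n+1)T]$ via the semigroup property $\mu_{nT+s}=P_s^*\mu_{nT}$, one obtains $\phi(t)\le C\e^{-\lambda t}\phi(0)$ with $\lambda=(\ln 2)/T$, which is \eqref{WY3} (after replacing $K^*$ by the smaller of the two thresholds obtained here and above).

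The main obstacle I anticipate is the book-keeping of moments along the \emph{nonlinear} flow: the Duhamel estimates above require $\mu_t\in\mathscr P_{\beta^*}^{M^*}$ for all $t$ (so that \eqref{WT5} applies and the moment factors $C(M^*)$ are uniform), whereas the theorem only posits $\mu\in\mathscr P_\beta$. This has to be settled first by a moment-propagation lemma obtained from It\^o's formula for $V_{\beta^*}(X_t)$ and \eqref{E12}, showing that $\mathscr L_{X_t}(|\cdot|^{\beta^*})$ stays below $M^*$ once it enters $\mathscr P_{\beta^*}^{M^*}$, and that it enters after a controlled time from any $\mu\in\mathscr P_\beta$; the contribution of this initial transient is then absorbed into the constant $C$ in \eqref{WY3}. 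A secondary technical point is the rigorous justification of the two Duhamel identities, which relies on the smoothing/gradient bound in $({\bf A}_6)$ to make sense of $\nn P_t^{\mu}f$ for merely bounded $f$ and to dominate the integrands near the endpoints $s=0$ and $s=t$.
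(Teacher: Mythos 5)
Your proposal takes essentially the same route as the paper: (i) reduce uniqueness to a Banach contraction of the map $\mu\mapsto\pi^\mu$ on $\mathscr P_{\beta^*}^{M^*}$ in the norm $\|\cdot\|_U$ via exactly the same two-term split $\Lambda_{\mu_1}-\Lambda_{\mu_2}=(\Lambda_{\mu_1}P_t^{\mu_1}-\Lambda_{\mu_2}P_t^{\mu_1})+(\Lambda_{\mu_2}P_t^{\mu_1}-\Lambda_{\mu_2}P_t^{\mu_2})$, with $({\bf A}_5)$ handling the first bracket and a Duhamel identity plus the gradient estimate $({\bf A}_6)$ handling the second; (ii) derive \eqref{WY3} by decomposing through the decoupled frozen-distribution flow and then closing with a Gronwall-type step. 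The paper invokes Gronwall's lemma directly and produces a one-step contraction at a fixed $t_*$, while you iterate over windows $[nT,(n+1)T]$; these are functionally the same. Two minor points: your asserted moment bound $\sup_{t\ge0}P_t^\mu V_{p\beta_0}\lesssim V_{\beta_0}$ is not correct as written — the bound that holds (and is what the paper uses, see \eqref{WT3}, \eqref{WT3-}) is $P_t^\mu V_{p\beta_0}\le V_{p\beta_0}+C\,t^{p\beta_0/\beta}$, so only after taking the $1/p$-th power does one get $V_{\beta_0}$ plus a $t$-dependent additive term, and one should take $p\beta_0=\beta\wedge\beta^*$ rather than merely $p\beta_0<\beta^*$; neither issue affects the argument once restricted to a bounded time window. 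Your added step promoting uniqueness from $\mathscr P_{\beta^*}^{M^*}$ to all of $\mathscr P_{\beta^*}$ via the self-consistency bound of Lemma \ref{lem1}/\ref{lem2}, and your flagging of the need to keep $P_s^*\mu$ in $\mathscr P_{\beta^*}^{M^*}$ so that \eqref{WT5} applies along the nonlinear flow, are in fact small gaps that the paper's proof elides; the paper sidesteps the second by letting the moment growth appear in $\varphi(t)$ before applying Gronwall, whereas your moment-propagation lemma is an alternative fix.
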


 \begin{remark}\label{re1.8}
 Let's interpret the aforementioned Assumptions $({\bf A}_5)$ and $({\bf A}_6)$.
 \begin{itemize}
 \item[{\rm (i)}] \eqref{UT} in Assumption (${\bf A}_5 $) shows the explicit contraction rate for the semigroup $(P_t^\mu)_{t\ge0}$ under the weighted total variation distance, which, along with Assumption (${\bf A}_1 $) (in particular, \eqref{e:add} holds), implies the exponential ergodicity of $(Y_t^\mu)_{t\ge0}$. Clearly, Assumption  $({\bf A}_6)$ implies the $C_b$-Feller property of the semigroup $(P_t^\mu)_{t\ge0}$. Hence, under Assumptions  $({\bf A}_1)$,  $({\bf A}_5)$ and $({\bf A}_6)$,   Assumption  (${\bf A}_2 $) is satisfied.

 \item[{\rm(ii)}]
 \eqref{UT} is to be examined in the Appendix section
 by imposing explicit conditions on the drift term $b(x,\mu)$ and the L\'{e}vy measure $\nu$; see in particular Proposition \ref{pro} for more details.   \eqref{WT5} indicates $\mu\mapsto b(\cdot,\mu)$ is continuous under the weighted total variation distance $\|\cdot\|_U$.
 Besides, one may want to adopt  the  following more general form instead of \eqref{UT}:
 \begin{align}\label{UT1}
\|\mu_1P_t^\mu-\mu_2P_t^\mu\|_{U}\le h_{M^*}(t)\|\mu_1-\mu_2\|_{U},
\end{align}
where $h_{M^*}$ is a non-negative decreasing function
on
 $[0,\infty)$ satisfying $\lim_{t\to \infty} h_{M^*}(t)=0$. However, according to the semigroup property of $(P_t^\mu)_{t\ge0}$ and the fact that $\mu_1,\mu_2\in\mathscr P_{\beta_0}$, we can easily see that \eqref{UT1}  is essentially the same as \eqref{UT}.
 \item[{\rm(iii)}]
  Assumption $({\bf A}_6)$ is the so-called gradient estimate for SDEs with jumps. So far, there are various methods (e.g., F.-Y. Wang's Harnack inequality \cite{WW} and the Bismut-Elworthy-Li formula \cite{Zhangxicheng}) to verify Assumption $({\bf A}_6)$. For the case that $(Z_t)_{t\ge0}$ is a symmetric $\alpha$-stable process,  Assumption $({\bf A}_6)$ will be validated in Corollary \ref{cor} below.\end{itemize}
 \end{remark}

\begin{corollary}\label{cor}
 Let $(Z_t)_{t\ge0}$
be a symmetric $\alpha$-stable process on $\R^d$ with $\alpha\in (1,2)$.
 Assume that Assumptions $({\bf A}_1)$, $({\bf A}_3)$  and $({\bf A}_5)$ hold, and
  that there exists a constant $K_2>0$ such that for any $x,y\in\R^d$ and $\mu\in\mathscr P_{\beta^*}$,
 \begin{align}\label{WT}
\<x-y,b(x,\mu)-b(y,\mu)\>\le K_2 |x-y|^2.
\end{align}
 Then, there exists a constant $K^*>0$
 such that, for $K_1$ in \eqref{WT5} satisfying
  $K_1 \in(0,K^*]$, \eqref{E1} has a unique stationary distribution, and meanwhile the statement \eqref{WY3} holds true.
 \end{corollary}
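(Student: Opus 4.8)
The plan is to deduce the corollary from Theorem~\ref{thm4}. Since $({\bf A}_1)$, $({\bf A}_3)$ and $({\bf A}_5)$ are already assumed, it suffices to verify the gradient estimate $({\bf A}_6)$ and the well-posedness/ergodicity assumption $({\bf A}_2)$ for the frozen equation \eqref{EW5}; once all of $({\bf A}_1)$, $({\bf A}_2)$, $({\bf A}_3)$, $({\bf A}_5)$ and $({\bf A}_6)$ are in force, Theorem~\ref{thm4} directly produces the threshold $K^*>0$, the uniqueness of the stationary distribution $\pi\in\mathscr P_{\beta^*}$, and the exponential convergence \eqref{WY3}. The bulk of the work is therefore the verification of $({\bf A}_6)$ under the extra hypothesis \eqref{WT}.

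To check $({\bf A}_6)$, I would fix $\mu\in\mathscr P_{\beta^*}^{M^*}$ and use that \eqref{EW5} has \emph{additive} symmetric $\alpha$-stable noise with $\alpha\in(1,2)$, while \eqref{WT} is exactly the one-sided Lipschitz (monotonicity) condition for $b(\cdot,\mu)$ with constant $K_2$ independent of $\mu$. For such equations the Markov semigroup $(P_t^\mu)_{t\ge0}$ enjoys a gradient estimate of the form
\[
|\nn P_t^{\mu}f|(x)\le \varphi_p(t)\,(1\wedge t)^{-1/\alpha}\,\big(P_t^{\mu}|f|^p\big)^{1/p}(x),\qquad t>0,\ x\in\R^d,\ f\in C_b(\R^d),\ p>1,
\]
with $\varphi_p$ locally integrable on $[0,\infty)$; this is precisely $({\bf A}_6)$ with $\theta=1/\alpha\in(1/2,1)\subset(0,1)$, the short-time singularity $(1\wedge t)^{-1/\alpha}$ being inherited from the heat kernel of $(Z_t)_{t\ge0}$. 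I would obtain this either from the Bismut--Elworthy--Li derivative formula for $\alpha$-stable driven SDEs (cf.\ \cite{Zhangxicheng}) or from the coupling approach (coupling by reflection / refined basic coupling) for $\alpha$-stable processes (cf.\ \cite{WW}); in both routes \eqref{WT} is what controls the drift contribution along the coupled/perturbed paths, and since $K_2$ together with the Lyapunov data in \eqref{E12} are uniform in $\mu\in\mathscr P_{\beta^*}^{M^*}$, the modulus $\varphi_p$ comes out uniform in $\mu$, as required in $({\bf A}_6)$.

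It then remains to record $({\bf A}_2)$, which by Remark~\ref{re1.8}(i) follows from $({\bf A}_1)$, $({\bf A}_5)$ and $({\bf A}_6)$. Concretely: pathwise uniqueness for \eqref{EW5} is immediate because the noise is additive, so the difference $Y^{\mu,1}_t-Y^{\mu,2}_t$ of two solutions driven by the same $(Z_t)$ is absolutely continuous and \eqref{WT} gives $\frac{\d}{\d t}|Y^{\mu,1}_t-Y^{\mu,2}_t|^2\le 2K_2|Y^{\mu,1}_t-Y^{\mu,2}_t|^2$, forcing $Y^{\mu,1}\equiv Y^{\mu,2}$; non-explosion and the $\beta^*$-moment bound come from the Lyapunov function built from \eqref{E12}; the $C_b$-Feller property follows from $({\bf A}_6)$; and the Lyapunov bound of $({\bf A}_1)$ combined with the contraction \eqref{UT} of $({\bf A}_5)$ yields the unique IPM $\pi^\mu$. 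With $({\bf A}_1)$, $({\bf A}_2)$, $({\bf A}_3)$, $({\bf A}_5)$ and $({\bf A}_6)$ all verified, Theorem~\ref{thm4} finishes the proof. I expect the gradient-estimate step to be the main obstacle: one must run the Bismut/coupling argument so that $\varphi_p$ depends only on $K_2$ and the Lyapunov constants and not on $\mu$, which is delicate because $b(\cdot,\mu)$ is only one-sided Lipschitz — neither bounded nor globally Lipschitz — and the $t^{-1/\alpha}$ short-time singularity must be absorbed within a locally integrable bound.
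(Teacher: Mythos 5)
Your overall plan matches the paper's proof exactly: reduce to Theorem~\ref{thm4} by verifying $({\bf A}_6)$ (with $({\bf A}_2)$ then supplied by Remark~\ref{re1.8}(i)), and obtain $({\bf A}_6)$ with $\theta=1/\alpha$ from the Bismut--Elworthy--Li derivative formula for $\alpha$-stable driven SDEs, using \eqref{WT} to control the derivative flow so that $\varphi_p(t)=C_p\e^{\frac12 K_2 t}$ is uniform in $\mu\in\mathscr P_{\beta^*}^{M^*}$. However, the obstacle you flag at the end — that $b(\cdot,\mu)$ is only one-sided Lipschitz, whereas \cite[Theorem~1.1]{Zhangxicheng} requires a globally Lipschitz drift — is not just ``delicate'': it is the one place where the argument as you state it does not directly apply, and you leave it unresolved. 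The paper closes this gap by introducing the Yoshida approximation $b^{(n)}(\cdot,\mu)=n\big((I-b(\cdot,\mu)/n)^{-1}-I\big)$, which is globally $2n$-Lipschitz and, crucially, preserves the one-sided bound \eqref{WT} with the \emph{same} constant $K_2$. One then applies the Bismut formula (for the subordinated-Brownian representation $Z_t=W_{S_t}$) to the approximating semigroup $P_t^{n,\mu}$, uses the chain rule on $|\nn_h Y_t^{n,\mu,x}|^2$ together with $\langle v,\nn_v b^{(n)}(z,\mu)\rangle\le K_2|v|^2$ to get $|\nn_h Y_t^{n,\mu,x}|\le \e^{K_2t/2}|h|$, combines this with H\"older and BDG to derive the $n$-uniform bound $|\nn P_t^{n,\mu}f|\le C_p\e^{K_2t/2}(1\wedge t)^{-1/\alpha}(P_t^{n,\mu}|f|^p)^{1/p}$, and finally passes $n\to\infty$ using $Y_t^{n,\mu}\to Y_t^\mu$ a.s. and dominated convergence. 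Without some such regularization your citation of \cite{Zhangxicheng} is out of scope, so this is the one genuine missing step; everything else in your outline is correct and coincides with the paper's argument.
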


 Below, we provide an  example  to illustrate the application of Corollary  \ref{cor}.

\begin{example}\label{exa3} Let $(Z_t)_{t\ge0}$ be a $1$-dimensional symmetric $\alpha$-stable process with $\alpha\in (1,2)$.
Consider the following scalar McKean-Vlasov SDE with an asymmetric double-well  potential:
\begin{equation}\label{ETY}
\begin{split}
\d X_t=-\bigg( &\lambda X_t(X_t-1)(X_t+2)\,\d t\\
&-\kappa\int_\R\big((1+|X_t|^2)^{\frac{1}{2}(\beta-1)}|y|+g(y)\big)\mathscr L_{X_t}(\d y)\bigg)\,\d t +\d Z_t,
\end{split}
\end{equation}
where $\lambda,\kappa>0$, $\beta\in[1,\alpha)$ and $g:\R\to\R$ is a bounded continuous function.  Then, the SDE \eqref{ETY} admits a unique stationary distribution, and the convergence to the equilibria  (i.e., \eqref{WY3}) holds true provided that $\kappa>0$ is small enough.
\end{example}

For the sake of completeness,  in the Appendix section we shall  give another collection of sufficiency to check the uniqueness of stationary distributions for McKean-Vlasov SDEs with jumps, where the drift term involved is continuous under the  Wasserstein distance.
Note that, for Example \ref{exa3}, the drift term in \eqref{ETY} is no longer continuous under the Wasserstein distance since the function $g$ is merely bounded and continuous.

Before the end of this section, let us make some comments on the novelties of all main theorems stated  previously  and their approaches.
\begin{itemize}
\item[{\rm(1)}] In Theorem \ref{thm1} and Theorem \ref{thm3}, we present sufficient conditions to ensure the existence of stationary distributions  and  the  multiplicity of stationary distributions for the McKean-Vlasov SDE with jumps \eqref{E1}, respectively. The approaches of these two theorems are based on Schauder's fixed point theorem, which is essentially inspired by \cite{Tugaut, Zhang} for McKean-Vlasov SDEs driven by Brownian motion. However, it is not trivial to apply
     the ideas adopted in \cite{Tugaut, Zhang}
      to the SDE \eqref{E1}. For example, in the proof of Lemma \ref{lem1}, which is a key ingredient to that of Theorem \ref{thm1}, we need to split suitably  the jumping sizes of the associated L\'evy measure $\nu$; otherwise, one cannot derive the desired assertion under sharp conditions (i) and (ii) in Assumption $({\bf A}_1)$; see Remark \ref{Rem1.2}(2) for more details.   Furthermore, in the proof of Theorem \ref{thm3} we need to use the Lyapunov function $V_{\vv,\beta,y}(x):=(\vv+|x-y|^2)^{\frac{1}{2}\beta}$ with $\varepsilon>0$ small enough (instead of $V_{\beta,y}(x):=|x-y|^\beta$ which is not
       a
      $C^2$-function). Both two new points arise from the feature  of the McKean-Vlasov SDE with jumps and the fact that in the present work we only assume that the associated L\'evy measure $\nu$ only has finite $\beta_*\in(0,2]$-moment for large jumps.

     \item[{\rm(2)}] Banach's contraction mapping theorem is frequently used to prove the uniqueness of the stationary distribution  for McKean-Vlasov SDEs;
     see e.g. \cite{LMW,Wang, Wangb}. However, most of the known results are concerned with the contraction in term of the Wasserstein-type distance. As mentioned in \cite{BRS}, the convergence in the total variation for McKean-Vlasov SDEs is much more involved  than that in the Wasserstein-type distance. To the best of our knowledge, Theorem \ref{thm4} is the first one to establish the uniqueness of the stationary distribution  for McKean-Vlasov SDEs  
     driven by L\'evy noise, together  
     with the exponential convergence rates in terms of the total variation distance. In particular, in the proof of Theorem \ref{thm4}, we make full use of the gradient estimates for SDEs with jumps, which is interesting in its own and   also completely different from that in the framework for McKean-Vlasov SDEs with Brownian motion; see in particular \cite[Lemma 3.6]{BRS}.
\end{itemize}
 \ \

The content of this paper is organized as follows. In Section \ref{sec2},
 based on the  establishment that the mapping $\Lambda_\mu=\pi_\mu$ leaves invariantly in a suitable subspace of $\mathscr P_{\beta^*}$, we finish the proof of Theorem \ref{thm1} via Schauder's fixed point theorem. At the same time, some sufficient criteria are provided to examine Assumption (${\bf A}_3$) in Section \ref{sec2}. Section \ref{sec3} is devoted to the proofs of Theorem \ref{thm3}, Example \ref{exm} as well as Example \ref{exm2}. In Section \ref{sec4}, by the aid of gradient estimates for SDEs with jumps and Duhamel's principle, the proof of Theorem \ref{thm4} is completed.  Simultaneously, the proofs of Corollary \ref{cor} and Example \ref{exa3} are finished in  Section \ref{sec4}. In the last section, for completeness, we explore the uniqueness  of stationary distributions for McKean-Vlasov SDEs with jumps when the underlying drift is $L^1$-Wasserstein continuous with respect to the measure variable, and furnish  some sufficient conditions to check the exponential ergodicity \eqref{UT} in Assumption $({\bf A}_5)$ under the weighted total variation distance for the reference SDE \eqref{EW5}.

\section{Proof of Theorem \ref{thm1} and verification of Assumption (${\bf A}_3$)}\label{sec2}
The  reference  SDE \eqref{EW5}
provides a vital link  to prove  existence of stationary distributions for the SDE \eqref{E1}. Before    the proof of Theorem \ref{thm1}, we analyze in depth the behavior of the IPM
$\pi^\mu$ associated with  the SDE \eqref{EW5}.

\subsection{Qualitative analysis of $\pi^\mu$}
To begin,  we introduce some notations. Let $\beta\in (0,\beta_*]$, and $C_b, \theta_1,\lambda_1,\theta_2,\lambda_2$ be the constants in \eqref{E12}. Set $\beta^*:=\beta+\theta_1-1$.
Define for any $\gamma,\vv\in[0,1)$ and $a>0,$
\begin{align}\label{E14}
\Gamma(\gamma,a,\vv):=(1-\vv)\big(\I_{\{\gamma=0\}}+ a^{ \frac{\vv}{1-\vv}}\I_{\{\gamma\in(0,1)\}}\big),
\end{align}
and  for any $\vv, r >0 $ and $l>1$,
\begin{equation}\label{E3}
\begin{split}
\Phi(\vv, r,l)
&:=  \beta  C_b +\beta\Gamma((\beta-1)_+, \gamma_2/\vv,\gamma_2)\left|\nu(\cdot\I_{\{1<|\cdot|\le l\}})\right| ^{\frac{1}{1-\gamma_2}}\\
&\quad\,\,+\beta \lambda_1h(r^2)^{(1+\theta_1)/2} (1+r^2)^{\beta^*/2}
 +  \frac{1}{2}\beta\nu(|\cdot|^2\I_{\{|\cdot|\le l\}})+\nu(|\cdot|^{\beta}\I_{\{|\cdot|>l\}}),
\end{split}
\end{equation}
 where
  $\gamma_2: =(\beta-1)^+/\beta^*$ and $h(r):=\frac{r}{1+r}.$ Additionally, we define the following  index set:
\begin{equation}\label{E0}
\begin{split}
\Theta =\big\{(\vv_1,\vv_2, r_0, l)\in(0,\8)^3\times[1,\8):\beta&\big(\lambda_1h(r_0^2)^{(1+\theta_1)/{2}}-\vv_1\lambda_2\I_{\{\gamma_1\in(0,1)\}}-\vv_2\big)\\
&- 2^{\frac{\beta}{2}}\nu(|\cdot|^{\frac{\beta}{2}}\I_{\{|\cdot|>l\}})>0 \big\},
\end{split}
\end{equation} where $\gamma_1:=(\beta+\theta_2-2)^+/\beta^*$.

The following lemma demonstrates  that, for fixed $\mu\in\mathscr P_{\beta^*}$,  the $\beta^*$-th moment of $\pi^\mu$ can be bounded explicitly.

\begin{lemma}\label{lem1}
Assume that \eqref{E12} holds  with $\theta_1\ge1-\beta/2$ and $\theta_2<1+\theta_1$, and that, for fixed $\mu\in \mathscr P_{\beta^*}$, the process $(Y_t^{\mu})_{t\ge0}$ associated with \eqref{EW5} satisfies Assumption $({\bf A}_2)$ and is non-explosive.
Then, for any $(\vv_1,\vv_2,r_0,l)\in \Theta$,
\begin{align}\label{E-}
\pi^\mu(|\cdot|^{\beta^*} )\le  \frac{  \Phi(\vv_2, r_0,l)+\beta\lambda_2\Gamma(\gamma_1,\gamma_1/\vv_1,\gamma_1)\mu(|\cdot|^{\theta_3})^{\frac{\theta_4}{1-\gamma_1}} }{\beta(\lambda_1h(r_0^2)^{\frac{1}{2}(1+\theta_1)}-\vv_1\lambda_2\I_{\{\gamma_1\in(0,1)\}}-\vv_2)-2^{\frac{\beta}{2}}
\nu(|\cdot|^{\frac{\beta}{2}}\I_{\{|\cdot|>l\}})}.
\end{align}
In particular, this implies $\pi^\mu\in \mathscr P_{\beta^*}$.
\end{lemma}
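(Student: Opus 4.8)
The plan is to establish, for the test function $V_\beta(x):=(1+|x|^2)^{\beta/2}$, a Foster--Lyapunov drift inequality along the generator $\mathscr L_\mu$ of the reference SDE \eqref{EW5}, namely
\begin{equation*}
\mathscr L_\mu V_\beta(x)\le N(\mu)-D\,V_{\beta^*}(x),\qquad x\in\R^d,
\end{equation*}
where $N(\mu)$ is the numerator and $D$ the denominator of the right-hand side of \eqref{E-}; the key structural point is that $D>0$ exactly when $(\vv_1,\vv_2,r_0,l)\in\Theta$, while $N(\mu)<\8$ for $\mu\in\mathscr P_{\beta^*}$ because $\theta_3\le\beta^*$. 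Granting this inequality, I would feed it into the stationarity of $\pi^\mu$ (which annihilates $\pi^\mu(\mathscr L_\mu V_\beta)$) to obtain $\pi^\mu(V_{\beta^*})\le N(\mu)/D$, and since $|x|^{\beta^*}\le V_{\beta^*}(x)$ this is precisely \eqref{E-}, whence also $\pi^\mu\in\mathscr P_{\beta^*}$.

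To derive the drift inequality I would write $\mathscr L_\mu V_\beta(x)=\beta(1+|x|^2)^{\frac12\beta-1}\<x,b(x,\mu)\>+J(x)$ with $J(x)$ the nonlocal (jump) part, and treat the two parts separately. For the local part, insert \eqref{E12} and use the identity $(1+|x|^2)^{\frac12\beta-1}|x|^{1+\theta_1}=h(|x|^2)^{(1+\theta_1)/2}V_{\beta^*}(x)$; replacing $h(|x|^2)^{(1+\theta_1)/2}$ by $h(r_0^2)^{(1+\theta_1)/2}$ is legitimate up to the additive constant $\beta\lambda_1 h(r_0^2)^{(1+\theta_1)/2}(1+r_0^2)^{\beta^*/2}$ (the error is $\le0$ on $\{|x|\ge r_0\}$ by monotonicity of $h$ and is bounded on $\{|x|<r_0\}$), and the perturbation $\beta\lambda_2V_{\beta^*}(x)^{\gamma_1}\mu(|\cdot|^{\theta_3})^{\theta_4}$ is handled by a weighted Young inequality with weight $\vv_1/\gamma_1$ --- this is exactly what produces $\beta\lambda_2\vv_1V_{\beta^*}(x)\I_{\{\gamma_1\in(0,1)\}}$ (to be absorbed) plus the constant $\beta\lambda_2\Gamma(\gamma_1,\gamma_1/\vv_1,\gamma_1)\mu(|\cdot|^{\theta_3})^{\theta_4/(1-\gamma_1)}$ in the notation \eqref{E14}. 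For the nonlocal part I would split $\nu$ at the three levels $\{|z|\le1\}$, $\{1<|z|\le l\}$, $\{|z|>l\}$. On $\{|z|\le l\}$, a second-order Taylor expansion together with the elementary bound $\|\nn^2V_\beta\|\le\beta$ (valid because $\beta\le2$, checked by diagonalising $\nn^2V_\beta$) gives $V_\beta(x+z)-V_\beta(x)-\<\nn V_\beta(x),z\>\I_{\{|z|\le1\}}\le\frac12\beta|z|^2+\<\nn V_\beta(x),z\>\I_{\{1<|z|\le l\}}$, whose $\nu$-integral is $\frac12\beta\nu(|\cdot|^2\I_{\{|\cdot|\le l\}})+\<\nn V_\beta(x),\nu(\cdot\I_{\{1<|\cdot|\le l\}})\>$; then $|\nn V_\beta(x)|\le\beta(1+|x|^2)^{(\beta-1)/2}=\beta V_{\beta^*}(x)^{\gamma_2}$ together with another weighted Young inequality (weight $\vv_2$) produces $\beta\vv_2V_{\beta^*}(x)$ (to be absorbed) plus the constant $\beta\Gamma((\beta-1)_+,\gamma_2/\vv_2,\gamma_2)|\nu(\cdot\I_{\{1<|\cdot|\le l\}})|^{1/(1-\gamma_2)}$ of $\Phi$. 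On $\{|z|>l\}$ one cannot Taylor-expand; instead I would use $1+|x+z|^2\le\big((1+|x|^2)^{1/2}+|z|\big)^2$ and the subadditivity $(a+b+c)^{\beta/2}\le a^{\beta/2}+b^{\beta/2}+c^{\beta/2}$ (again $\beta\le2$) to get
\begin{equation*}
V_\beta(x+z)-V_\beta(x)\le 2^{\frac12\beta}(1+|x|^2)^{\frac14\beta}|z|^{\frac12\beta}+|z|^\beta .
\end{equation*}
Here the hypothesis $\theta_1\ge1-\beta/2$ is used in an essential way: it is equivalent to $\beta/2\le\beta^*$, so $(1+|x|^2)^{\beta/4}\le V_{\beta^*}(x)$, and the $\nu$-integral of the right-hand side is $\le2^{\beta/2}\nu(|\cdot|^{\beta/2}\I_{\{|\cdot|>l\}})V_{\beta^*}(x)+\nu(|\cdot|^\beta\I_{\{|\cdot|>l\}})$ --- the first of these being exactly what is subtracted in $D$ and the second the last term of $\Phi$. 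Collecting all the multiples of $V_{\beta^*}(x)$ on the left yields the drift inequality; every $\nu$-integral that occurs is finite since $\frac12\beta<\beta\le\beta_*$ and by \eqref{E2}.

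To pass from the drift inequality to \eqref{E-} rigorously, I would localise with $\tau_n:=\inf\{t\ge0:|Y_t^\mu|\ge n\}$. Dynkin's formula applied to the $C^2$ function $V_\beta$ (which has at most quadratic growth; one may truncate it outside large balls to be fully pedantic) gives, for any starting point $x$,
\begin{equation*}
D\,\E_x\!\int_0^{t\wedge\tau_n}V_{\beta^*}(Y_s^\mu)\,\d s\le V_\beta(x)+N(\mu)\,t,
\end{equation*}
and letting $n\to\8$ (using non-explosiveness and monotone convergence) and dividing by $t$ gives $\frac1t\E_x\!\int_0^tV_{\beta^*}(Y_s^\mu)\,\d s\le\frac{V_\beta(x)}{Dt}+\frac{N(\mu)}{D}$. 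Since by Assumption $({\bf A}_2)$ the process $(Y_t^\mu)_{t\ge0}$ has a unique IPM, it is ergodic, so Birkhoff's theorem applied to $V_{\beta^*}\wedge k$ (and Fatou) gives $\pi^\mu(V_{\beta^*}\wedge k)\le\liminf_{t\to\8}\frac1t\E_x\!\int_0^t(V_{\beta^*}\wedge k)(Y_s^\mu)\,\d s\le N(\mu)/D$ for $\pi^\mu$-a.e.\ $x$; letting $k\to\8$ and using monotone convergence yields $\pi^\mu(V_{\beta^*})\le N(\mu)/D$, that is \eqref{E-}.

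The step I expect to be the main obstacle is obtaining the drift inequality with the \emph{sharp} constants, in particular the split of $\nu$ at the free level $l$ and the use of $\beta\le2$ together with $\theta_1\ge1-\beta/2$ in the large-jump estimate: a cruder large-jump bound would leave an uncontrolled multiple of $V_\beta(x)$ --- rather than of $V_{\beta^*}(x)$ --- on the right-hand side whenever $\beta^*<\beta$, which would wreck the critical endpoints $(i)$, $(ii)$ of Assumption $({\bf A}_1)$ (equivalently, would force $\Theta$ to be empty). A secondary technical point, already built into the argument above, is the rigorous justification of $\pi^\mu(\mathscr L_\mu V_\beta)\le0$ for the non-compactly-supported $V_\beta$, which is precisely what the localisation plus ergodic averaging and the non-explosiveness hypothesis are designed to handle.
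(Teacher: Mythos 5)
Your proof is correct and the drift estimate -- which is the real substance of the lemma -- follows the paper's argument essentially step for step: same test function $V_\beta$, same split of the L\'evy measure at the free level $l$ into $\{|z|\le l\}$ (Taylor) and $\{|z|>l\}$ (the inequality $|a^{\beta/2}-b^{\beta/2}|\le|a-b|^{\beta/2}$, or your equivalent subadditivity argument), same weighted Young inequalities producing the constants $\Gamma(\gamma_1,\gamma_1/\vv_1,\gamma_1)$ and $\Gamma((\beta-1)_+,\gamma_2/\vv_2,\gamma_2)$, same use of $h(|x|^2)\ge h(r_0^2)$ up to the additive constant, and the same essential reliance on $\theta_1\ge 1-\beta/2$ to bound $|x|^{\beta/2}$ by $V_{\beta^*}(x)$ in the large-jump term. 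The only genuine deviation is the final passage from the localised Dynkin bound to the moment estimate on $\pi^\mu$: you invoke the Birkhoff ergodic theorem (using that uniqueness of the IPM gives ergodicity, and working at $\pi^\mu$-a.e.\ starting point), whereas the paper shows tightness of the empirical measures $\Lambda_t^\mu$ started at the origin and uses the Krylov--Bogoliubov criterion together with the $C_b$-Feller property in $({\bf A}_2)$ to identify the weak limit as $\pi^\mu$, then evaluates at $V_{\beta^*}\wedge n$ and applies Jensen and Fatou. Both final steps are correct and both draw on $({\bf A}_2)$; the paper's route is a little more self-contained in that it does not rely on the ergodic decomposition/Birkhoff machinery, but yours is arguably the more standard way to read off a moment bound from a Foster--Lyapunov inequality once uniqueness of the IPM is known.
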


\begin{proof}

Below,   we  stipulate $(\vv_1,\vv_2, r_0,l)\in\Theta$,
fix $\mu\in\mathscr P_{\beta^*}$, and recall $V_r(x)=(1+|x|^2)^{\frac{r}{2}}$ for all $x\in\R^d$ and a given  $r\ge0.$
In some occasions, we shall
write $(Y_t^{\mu,x})_{t\ge0}$ instead of $(Y_t^\mu)_{t\ge0}$ to stress  the initial value $Y_0^\mu=x\in\R^d$.
 Furthermore, we
 define the empirical measure:
 $$\Lambda_t^\mu(B) =\frac{1}{t}\int_0^t(\delta_{\bf0}P_s^\mu)(B)\,\d s,\quad t>0,\, B\in\mathscr B(\R^d),$$
 where $(\delta_xP_t^\mu)_{t\ge0}$ denotes the
 distribution of $(Y_t^{\mu,x})_{t\ge0}$.
By  Chebyshev's  inequality, it follows  that for any $t,R>0,$
\begin{align*}
\Lambda_t^\mu\big(\I_{\{V_{\beta^*}(\cdot)\ge R\}}\big)\le\frac{1}{tR}\int_0^t\E V_{\beta^*}(Y_s^{\mu,{\bf0}})\,\d s.
\end{align*}
Therefore, due  to the fact that
$\R^d\ni x\mapsto V_{\beta^*}(x)$ is a compact function,
$(\Lambda_t^\mu)_{t\ge1}$ is tight provided that the following statement
\begin{equation}\label{E4}
\begin{split}
\int_0^t\E V_{\beta^*}(Y_s^{\mu,{\bf0}})\,\d s\le &\frac{1+\big( \Phi(\vv_2,r_0, l)+\beta\lambda_2\Gamma(\gamma_1,\gamma_1/\vv_1,\gamma_1)\mu(|\cdot|^{\theta_3})^{\frac{\theta_4}{1-\gamma_1}}\big)t}
{\beta(\lambda_1h(r_0^2)^{\frac{1+\theta_1}{2}}-\vv_1\lambda_2\I_{\{\gamma_1\in(0,1)\}}-\vv_2)-2^{\frac{\beta}{2}}
\nu(|\cdot|^{\frac{\beta}{2}}\I_{\{|\cdot|>l\}})}
\end{split}
\end{equation}
is true.
Subsequently,  by invoking the Krylov-Bogoliubov criterion (see e.g. \cite[Theorem 3.1.1]{DZ}) and making use of  the $C_b$-Feller property of the process $(Y_t^{\mu})_{t\ge0}$ as well as the uniqueness of $\pi^\mu$,
we deduce   that for any $n\ge1$,
\begin{align*}
\pi^\mu(V_{\beta^*}(\cdot)\wedge n)=\limsup_{t\to\8}\Lambda^\mu_t(V_{\beta^*}(\cdot)\wedge n)\le \limsup_{t\to\8}(n\wedge\Lambda^\mu_t(V_{\beta^*}(\cdot))),
\end{align*}
where the inequality is valid due to Jensen's inequality plus the concave property of the mapping  $[0,\8)\ni r\mapsto r\wedge n$.
Accordingly,
the assertion \eqref{E-} is achieved   via Fatou's lemma and the prerequisite based on the establishment of \eqref{E4}.

In the following context, we aim at  verifying \eqref{E4}. Applying It\^o's formula  yields that
\begin{equation*}
\begin{split}
\d V_\beta(Y_t^\mu) &=  \beta V_{\beta-2}(Y_t^\mu)\<Y_t^\mu,b(Y_t^\mu,\mu)\>\,\d t \\
&\quad +\int_{\{|z|\le 1\}}\big\{V_\beta(Y_t^\mu+z)-V_\beta(Y_t^\mu)-\<\nn V_\beta(Y_t^\mu),z\> \big\}\,\nu(\d z)\,\d t \\
&\quad  +\int_{\{|z|>1\}}\big\{V_\beta(Y_t^\mu+z)-V_\beta(Y_t^\mu) \big\}\,\nu(\d z)\, \d t+\d M_t^\mu,
\end{split}
\end{equation*} where $(M_t^{\mu})_{t\ge0}$ is a local martingale.
It is obvious that the identity above can be reformulated as follows: for all $l\ge 1$,
\begin{equation}\label{E6}
\begin{split}
\d V_\beta(Y_t^\mu) &=  \beta V_{\beta-2}(Y_t^\mu)\left(\<Y_t^\mu,b(Y_t^\mu,\mu)\>+\int_{\{1\le |z|\le l\}}\<Y_t^\mu,z\>\nu(\d z)\right)\,\d t \\
&\quad +\int_{\{|z|\le l\}}\big\{V_\beta(Y_t^\mu+z)-V_\beta(Y_t^\mu)-\<\nn V_\beta(Y_t^\mu),z\> \big\}\,\nu(\d z)\,\d t \\
&\quad  +\int_{\{|z|>l\}}\big\{V_\beta(Y_t^\mu+z)-V_\beta(Y_t^\mu) \big\}\,\nu(\d z)\, \d t+\d M_t^\mu\\
&=:  \big( I_1^\mu(t)+I_2^\mu(t)+I_3^\mu(t)\big) \,\d t+ \d M_t^{\mu}.
\end{split}
\end{equation}

From   $\beta\in (0,\beta_*]\subset (0,2]$, besides   $\nn V_\beta(x)=\beta V_{\beta-2}(x)x,x\in\R^d, $ and \eqref{E12}, we deduce that
\begin{equation}\label{E5}
\begin{split}
I_1^\mu(t)&\le \beta V_{\beta-2}(Y_t^\mu)\big(C_b-\lambda_1|Y_t^\mu|^{1+\theta_1} +\lambda_2 (1+|Y_t^\mu|^2)^{\theta_2/2}\mu(|\cdot|^{\theta_3})^{\theta_4}
\big)\\
&\quad +\beta V_{\beta-2}(Y_t^\mu) |Y_t^\mu|\left|\nu(\cdot\I_{\{1<|\cdot|\le l\}})\right|\\
&\le \beta  C_b -\beta\lambda_1h(|Y_t^\mu|^2)^{\frac{1+\theta_1}{2}}V_{\beta^* }(Y_t^\mu) +\beta\lambda_2V_{(\beta+\theta_2-2)^+}(Y_t^\mu) \mu(|\cdot|^{\theta_3})^{\theta_4} \\
&\quad+\beta V_{\beta-1}(Y_t^\mu)\left|\nu(\cdot\I_{\{1<|\cdot|\le l\}})\right|,
\end{split}
\end{equation}
where $h(r):=r/(1+r), r\ge0$.
Next,
Young's inequality implies that
\begin{align*}
 V_{(\beta+\theta_2-2)^+}(x) \mu(|\cdot|^{\theta_3})^{\theta_4}&\le \vv_1V_{\beta^* }(Y_t^\mu) \I_{\{\gamma_1\in(0,1)\}}  +
   \Gamma(\gamma_1,\gamma_1/\vv_1,\gamma_1)\mu(|\cdot|^{\theta_3})^{\frac{\theta_4}{1-\gamma_1}} \end{align*}
 and that
\begin{align*} V_{\beta-1}(Y_t^\mu)\left|\nu(\cdot\I_{\{1<|\cdot|\le l\}})\right|&\le \varepsilon_2 V_{\beta^*}(Y_t^\mu)\I_{\{\beta\in(1,2)\}} +\Gamma((\beta-1)_+,\gamma_2/\vv_2,\gamma_2)\left|\nu(\cdot\I_{\{1<|\cdot|\le l\}})\right| ^{\frac{1}{1-\gamma_2}},
\end{align*}
 where $\gamma_1:=(\beta+\theta_2-2)^+/\beta^*$, $\gamma_2:=(\beta-1)_+/\beta^*$, and $\Gamma $ was defined in \eqref{E14}.
 Correspondingly,  $I_1^\mu(t)$ can be bounded as below:
\begin{equation}\label{E7}\begin{split}
I_1^\mu(t)\le  &\beta  C_b+\beta \lambda_1h(r_0^2)^{\frac{1+\theta_1}{2}}(1+r_0^2)^{\frac{1}{2}\beta^*}
\\
&- \beta\big(\lambda_1h(r_0^2)^{\frac{1+\theta_1}{2}}-\vv_1\lambda_2\I_{\{\gamma_1\in(0,1)\}} -\vv_2\I_{\{\beta\in(1,2)\}}\big)V_{\beta^* }(Y_t^\mu)\\
&+\beta\lambda_2\Gamma(\gamma_1,\gamma_1/\vv_1,\gamma_1)\mu(|\cdot|^{\theta_3})^{\frac{\theta_4}{1-\gamma_1}}+\beta\Gamma((\beta-1)_+,\gamma_2/\vv_2,\gamma_2)\left|\nu(\cdot\I_{\{1<|\cdot|\le l\}})\right| ^{\frac{1}{1-\gamma_2}},
\end{split}
\end{equation}
where we also employed the fact that $(0,\8)\ni r\mapsto h(r)$  is increasing.

We move to estimate the term $I^\mu_2(t).$
A direct calculation shows that
\begin{align*}
\nn^2 V_\beta(x)=\beta                                                                                                                                                                                V_{\beta-2}(x)I_d+\beta(\beta-4)V_{\beta-4}(x)xx^\top,\quad x\in\R^d,
\end{align*}
where $x^\top$ means the transpose of $x,$  and $I_d$ stands for the identity $d\times d$-matrix. Then, the mean value theorem and the precondition $\beta\in(0,2)$
 enable  us to obtain that
\begin{align}\label{E8}
 I_2^\mu(t)\le  \frac{\beta }{2}\nu(|\cdot|^2\I_{\{|\cdot|\le l\}}).
\end{align}

Next,
we turn to quantify  $I_3^\mu(t).$
By invoking   the  inequality: $|a^{\beta/2}-b^{\beta/2}|\le |a-b|^{\beta/2}$ for all $a,b\ge0$,
it is easy to see that for all $x\in\R^d,$
\begin{align*}
\int_{\{|z|>l\}}|V_\beta(x+z)-V_\beta(x)|\,\nu(\d z)
&\le \int_{\{|z|>l\}}\big(2|x|\cdot|z|+|z|^2\big)^{\frac{\beta}{2}}\,\nu(\d z)\\
&\le 2^{\frac{\beta}{2}}\nu(|\cdot|^{\frac{\beta}{2}}\I_{\{|\cdot|>l\}})|x|^{\frac{\beta}{2}}+\nu(|\cdot|^{\beta}\I_{\{|\cdot|>l\}})\\
&\le 2^{\frac{\beta}{2}}\nu(|\cdot|^{\frac{\beta}{2}}\I_{\{|\cdot|>l\}}) V_{\beta^*}(x)+\nu(|\cdot|^{\beta}\I_{\{|\cdot|>l\}}),
\end{align*} where in the last inequality we used the prerequisite $\theta_1\ge1-\beta/2$ (so $\beta^*\ge \beta/2$).
So,
 $I^\mu_3(t)$  can be dominated as below:
\begin{equation}\label{E11}
\begin{split}
 I_3^\mu(t)&\le 2^{\frac{\beta}{2}}\nu(|\cdot|^{\frac{\beta}{2}}\I_{\{|\cdot|>l\}})
 V_{\beta^* }(Y_t^\mu)  +\nu\big(|\cdot|^{\beta}\I_{\{|\cdot|>l\}}\big).
\end{split}
 \end{equation}
 Now, putting \eqref{E7}, \eqref{E8} and \eqref{E11} into \eqref{E6}, we arrive at
\begin{equation}\label{E10}
\begin{split}
\d  V_\beta(Y_t^\mu)
\le&
 - \big(\beta\big(\lambda_1h(r_0^2)^{\frac{1+\theta_1}{2}}-\vv_1\lambda_2\I_{\{\gamma_1\in(0,1)\}}-\vv_2\big)-2^{\frac{\beta}{2}}\nu(|\cdot|^{\frac{\beta}{2}}\I_{\{|\cdot|>l\}}) \big)V_{\beta^* }(Y_t^\mu)\,\d t\\
 &+  \beta\lambda_2\Gamma(\gamma_1,\gamma_1/\vv_1,\gamma_1)\mu(|\cdot|^{\theta_3})^{\frac{\theta_4}{1-\gamma_1}}\d t +\Phi(\vv_2, r_0,l)
 \,\d t+\d M_t^\mu,
\end{split}
\end{equation}
where $\Phi$ was  defined in \eqref{E3}.

For any $m>0$, define the stopping time:
\begin{align*}
\tau_m^\mu=\big\{t>0: |Y_t^\mu|>m\big\}.
\end{align*}
Since the process $(Y_t^\mu)_{t\ge0}$ is non-explosive, $\lim_{m\to\8}\tau_m^\mu=\8$, a.s. As a consequence,
Fatou's lemma   yields  that
\begin{align*}
 \E\bigg(\int_0^t V_{\beta^* }(Y_s^{\mu,{\bf0}})\,\d s\bigg) =\E\bigg(\liminf_{m\to\8} \int_0^{t\wedge \tau_m^\mu} V_{\beta^* }(Y_s^{\mu,{\bf0}})\,\d s\bigg) \le\liminf_{m\to\8}\E\bigg( \int_0^{t\wedge \tau_m^\mu} V_{\beta^* }(Y_s^{\mu,{\bf0}})\,\d s\bigg).
\end{align*}
Subsequently,
\eqref{E4} is verifiable by taking  \eqref{E10} into consideration.
\end{proof}

According to Lemma \ref{lem1}, under Assumptions $({\bf A}_1)$ and $({\bf A}_2)$,
 the mapping
\begin{align}\label{S10}
\Lambda_\cdot: \mathscr P_{\beta^*}\to \mathscr P_{\beta^*},\quad\quad  \mu\mapsto\Lambda_\mu=\pi^\mu
\end{align}
is well defined. The lemma below shows that, for a suitable $M^*>0,$
 $ \mathscr P_{\beta^*}^{M^*}$ is an invariant set of the mapping $\Lambda_\cdot$.

\begin{lemma}\label{lem2}
Under Assumptions $({\bf A}_1)$
and $({\bf A}_2)$, $\Lambda_\mu\in\mathscr P_{\beta^*}^{M} $ for given $\mu\in\mathscr P_{\beta^*}^{M^*}$, where $M^*:=M_1^*\vee M_2^*$ with $M_1^*,M_2^*>0$ being given in \eqref{T1} and \eqref{T2} below, respectively.
\end{lemma}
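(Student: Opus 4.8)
The plan is to feed the quantitative moment bound \eqref{E-} of Lemma~\ref{lem1} into a self-improving estimate for the $\beta^*$-th moment. First I would fix an admissible tuple $(\vv_1,\vv_2,r_0,l)\in\Theta$. The set $\Theta$ is nonempty: by \eqref{E2} and $\beta\le\beta_*$ one has $\nu(|\cdot|^{\beta/2}\I_{\{|\cdot|>1\}})\le\nu(|\cdot|^{\beta_*}\I_{\{|\cdot|>1\}})<\8$, so $\nu(|\cdot|^{\beta/2}\I_{\{|\cdot|>l\}})\to0$ as $l\to\8$, while $h(r_0^2)^{(1+\theta_1)/2}\to1$ as $r_0\to\8$; thus, for $r_0$ and $l$ large and $\vv_1,\vv_2$ small, the inequality defining $\Theta$ in \eqref{E0} holds. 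For such a tuple, abbreviate by $D:=\beta(\lambda_1h(r_0^2)^{(1+\theta_1)/2}-\vv_1\lambda_2\I_{\{\gamma_1\in(0,1)\}}-\vv_2)-2^{\beta/2}\nu(|\cdot|^{\beta/2}\I_{\{|\cdot|>l\}})>0$ the denominator in \eqref{E-}, and set $A:=\Phi(\vv_2,r_0,l)/D$ and $B:=\beta\lambda_2\Gamma(\gamma_1,\gamma_1/\vv_1,\gamma_1)/D$, so that Lemma~\ref{lem1} (applicable because \eqref{E12} forces non-explosivity of $(Y^\mu_t)_{t\ge0}$) reads $\pi^\mu(|\cdot|^{\beta^*})\le A+B\,\mu(|\cdot|^{\theta_3})^{\theta_4/(1-\gamma_1)}$ for every $\mu\in\mathscr P_{\beta^*}$.

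Next I would remove the dependence on $\mu$. Since $\theta_3\in(0,\beta^*]$, Jensen's inequality gives $\mu(|\cdot|^{\theta_3})\le\mu(|\cdot|^{\beta^*})^{\theta_3/\beta^*}$, so for $\mu\in\mathscr P_{\beta^*}^{M^*}$ one obtains $\pi^\mu(|\cdot|^{\beta^*})\le A+B(M^*)^{p_0}$, where $p_0:=\theta_3\theta_4/\big(\beta^*(1-\gamma_1)\big)$. Because $t\mapsto A+Bt^{p_0}$ is increasing, the required inclusion $\Lambda_\mu=\pi^\mu\in\mathscr P_{\beta^*}^{M^*}$ will follow as soon as $M^*$ satisfies $A+B(M^*)^{p_0}\le M^*$. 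Under alternative (i) of $({\bf A}_1)$ we have $p_0<1$, so $t\mapsto A+Bt^{p_0}-t$ is positive at $t=0$ and tends to $-\8$, hence possesses a positive root, which I record as
\begin{equation}\label{T1}
A+B\,(M_1^*)^{p_0}=M_1^*,\qquad M_1^*>0 ;
\end{equation}
under alternative (ii) we have $p_0=1$, the bound is affine, and, provided the tuple has been chosen so that $B<1$ (see below), I set
\begin{equation}\label{T2}
M_2^*:=\frac{A}{1-B}\,.
\end{equation}
In either alternative $A+B(M^*)^{p_0}\le M^*$ with $M^*:=M_1^*\vee M_2^*$ (the threshold not produced by the active alternative being immaterial), and since by $({\bf A}_2)$ every $\pi^\mu$ is a probability measure which by Lemma~\ref{lem1} already lies in $\mathscr P_{\beta^*}$, this yields $\Lambda_\mu\in\mathscr P_{\beta^*}^{M^*}$, as claimed.

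The one genuinely delicate step is securing $B<1$ in the critical case (ii). There $\beta^*(1-\gamma_1)=\theta_3\theta_4$ pins $p_0=1$, so no slack comes from the exponent and the whole gain must be extracted from the hypothesis $\lambda_1>\lambda_2$. When $\gamma_1=0$ one has $\Gamma(\gamma_1,\gamma_1/\vv_1,\gamma_1)=1$, and sending $r_0,l\to\8$ and $\vv_2\to0$ gives $D\to\beta\lambda_1$, so $B\to\lambda_2/\lambda_1<1$. When $\gamma_1\in(0,1)$ the factor $\Gamma(\gamma_1,\gamma_1/\vv_1,\gamma_1)=(1-\gamma_1)(\gamma_1/\vv_1)^{\gamma_1/(1-\gamma_1)}$ blows up as $\vv_1\to0$, so one must keep $\vv_1$ bounded away from $0$; the choice $\vv_1=\gamma_1$ makes $\Gamma(\gamma_1,1,\gamma_1)=1-\gamma_1$ and, after $r_0,l\to\8$ and $\vv_2\to0$, yields $B\to\lambda_2(1-\gamma_1)/(\lambda_1-\gamma_1\lambda_2)$, which is $<1$ precisely because $\lambda_1>\lambda_2$ — the same inequality keeping $\lambda_1-\gamma_1\lambda_2>0$, so that the limiting $D$ is positive. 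Everything else (the continuity and monotonicity of $A$, $B$, $D$ in the four parameters, and the existence of the root in \eqref{T1}) is routine, so this parameter balancing is the main obstacle.
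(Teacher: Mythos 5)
Your proposal is correct and shares the same backbone as the paper's proof: feed the bound \eqref{E-} from Lemma~\ref{lem1} into a self-consistent inequality $\pi^\mu(|\cdot|^{\beta^*})\le A+B\,(M^*)^{p_0}\le M^*$, using Jensen/H\"older to pass from $\mu(|\cdot|^{\theta_3})^{\theta_4/(1-\gamma_1)}$ to $\mu(|\cdot|^{\beta^*})^{p_0}$ with $p_0=\theta_3\theta_4/(\beta^*(1-\gamma_1))\le1$. Where you diverge is in the sub-critical case $({\bf A}_1)(\mathrm{i})$: the paper applies one more Young inequality to linearize the term $B t^{p_0}$ into $\vv t + C_1^*$ with a carefully tuned $\vv$, which turns the self-consistent inequality into an affine one and yields a fully explicit formula for $M_1^*$ (this explicitness is deliberately invoked later in Theorem~\ref{thm3}(i) and Remark~\ref{Rem1.2}(1)). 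You instead exploit the concavity of $t\mapsto A+Bt^{p_0}-t$ and define $M_1^*$ as its unique positive root, which is mathematically equivalent but implicit. In the critical case $({\bf A}_1)(\mathrm{ii})$, your treatment — fix $\vv_1=\gamma_1$ so that $\Gamma(\gamma_1,1,\gamma_1)=1-\gamma_1$, show $B<1$ from $\lambda_1>\lambda_2$, and set $M_2^*=A/(1-B)$ — is essentially the paper's argument (the paper's choice $\vv_1^{**}=\gamma_1$ is the same, and its explicit $M_2^*=4\Phi/(\beta(\lambda_1-\lambda_2))$ is just $A/(1-B)$ with slack factored in). One presentational caveat: when you "send $r_0,l\to\8$ and $\vv_2\to0$" to conclude $B<1$, the quantity $A=\Phi/D$ diverges along that path (since $\Phi$ contains $(1+r_0^2)^{\beta^*/2}$ and $\nu(|\cdot|^2\I_{\{|\cdot|\le l\}})$); what saves you is that $B<1$ is an open condition, so one fixes a finite admissible tuple near the boundary at which both $B<1$ and $A<\8$ hold — worth saying explicitly, but not a gap.
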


\begin{proof} Firstly, we note that, for fixed $\mu\in \mathscr P_{\beta^*}$, the process $(Y_t^{\mu})_{t\ge0}$ associated with \eqref{EW5} is non-explosive under Assumption $({\bf A}_1)$; see the argument of \eqref{EW3} below for more details. Hence, Lemma \ref{lem1} holds true.
In the following analysis,  we intend to show the desired assertion, case by case.

(i) Concerning the case  (${\bf A}_1) (i)$, we choose $
l>1$ large enough so that $$2^{\frac{\beta}{2}}\nu(|\cdot|^{\frac{\beta}{2}}\I_{\{|\cdot|>l\}})\le  2^{-\frac{1}{2}(7+\theta_1)}\beta\lambda_1.$$
In addition, we take
\begin{align}\label{T1}
\vv_1^{*}   =\frac{\lambda_1}{ 2^{ \frac{1}{2}(3+\theta_1)}\lambda_2}\quad \mbox{ and } \quad M_1^*  =\frac{2^{ 2+\frac{1}{2}(5+\theta_1)}(\Phi(\vv_2^{*},1, l)+ C_1^*)}{3\beta\lambda_1},
\end{align}
where for $\gamma_3=\frac{\theta_3\theta_4}{\beta^*(1-\gamma_1)}\in(0,1),$
\begin{align*} \vv_2^{*}: =\frac{\lambda_1 }{2^{ \frac{1}{2}(
7+\theta_1)}}~ \mbox{ and }  ~
C_1^*  =(1-\gamma_3 )\big(  \beta\lambda_2\Gamma(\gamma_1,\gamma_1/{\vv_1^*},\gamma_1) \big)^{\frac{1}{1-\gamma_3}}\Big(\frac{2^{ \frac{1}{2}(7+\theta_1)}\gamma_3}{\beta\lambda_1}\Big)^{\frac{\gamma_3}{1-\gamma_3}}.
\end{align*}
It is easy to see that
\begin{align*}
\beta\big(\lambda_1h(1)^{\frac{1}{2}(1+\theta_1)}-\vv_1^{*}\lambda_2\I_{\{\gamma_1\in(0,1)\}}-\vv_2^{*}\big)-2^{\frac{\beta}{2}}\nu(|\cdot|^{\frac{\beta}{2}}\I_{\{|\cdot|>l\}})
\ge2^{ -\frac{1}{2}(5+\theta_1)}\beta\lambda_1.
\end{align*}
 Hence,
$(\vv_1^{*},\vv_2^{*},1,l)\in\Theta$. Next, by H\"older's inequality and Young's inequality, it follows that
\begin{align*}
 \beta\lambda_2\Gamma(\gamma_1,\gamma_1/{\vv_1^*},\gamma_1)\mu(|\cdot|^{\theta_3})^{\frac{\theta_4}{1-\gamma_1}}
  \le  2^{ -\frac{1}{2}(7+\theta_1)}\beta\lambda_1\mu(|\cdot|^{\beta^*})+C_1^*.
\end{align*}
So, for any $\mu\in  \mathscr P_{\beta^*}^{M^*_1} $,  we can deduce from \eqref{E-} that
\begin{align*}
\pi^\mu(|\cdot|^{\beta^*} )\le \frac{  2^{ \frac{1}{2}(5+\theta_1)}(\Phi(\vv_2^{*},1,l)+2^{- \frac{1}{2}(7+\theta_1)}\beta\lambda_1M_1^*+C_1^*)}{\beta\lambda_1
}.
\end{align*}
This  yields that, for any  $\mu\in  \mathscr P_{\beta^*}^{M^*_1}$,  $\Lambda_\mu\in  \mathscr P_{\beta^*}^{M^*_1}$
 by leveraging  the definition of $M_1^*$.

(ii) Regarding the setting  (${\bf A}_1) (ii)$, we take $l>1$ large enough so that
$$2^{\frac{\beta}{2}}\nu(|\cdot|^{\frac{\beta}{2}}\I_{\{|\cdot|>l\}})\le\frac{1}{8}\beta( \lambda_1 -\lambda_2).$$
Moreover, we stipulate
\begin{align}\label{T2}
\vv_1^{**} =\gamma_1\quad \mbox{ and } \quad  M_2^*=\frac{4\Phi(\vv_2^{**},r_0, l)}{\beta(\lambda_1 -\lambda_2 ) },
\end{align}
where for $\lambda^*:=\frac{1}{2}(\lambda_1+\lambda_2)\in(\lambda_2,\lambda_1)$,
\begin{align*}
\vv_2^{**}:=\frac{1}{8 }\big( \lambda_1-\lambda_2 \big)\quad \mbox{ and } \quad
r_0:=
\frac{ (\lambda^*/\lambda_1)^{\frac{1}{1+\theta_1}}  }{\big(1-
 (\lambda^*/\lambda_1)^{\frac{2}{1+\theta_1}}  \big)^{\frac{1}{2}}}.
\end{align*}
It is ready  to see that $h(r_0^2)^{\frac{1}{2}(1+\theta_1)}=\lambda^*/\lambda_1$. This, together with the alternatives of $\vv_1^{**}$
and $\vv_2^{**}$, leads to
\begin{align*}
\beta \big(\lambda_1h(r_0^2)^{\frac{1}{2}(1+\theta_1)}-\vv_1^{**}\lambda_2\I_{\{\gamma_1\in(0,1)\}}
-\vv_2^{**}\big)-2^{\frac{\beta}{2}}\nu(|\cdot|^{\frac{\beta}{2}}\I_{\{|\cdot|>l\}})
  &\ge \frac{1}{2}\beta\lambda^*+\beta\big(1/2-\gamma_1\big)\lambda_2\\
  & >\beta\lambda_2(1-\gamma_1),
\end{align*}
where the second inequality is valid due to $\lambda_2<\lambda^*.$ As a result, we infer that
$(\vv_1^{**},\vv_2^{**},r_0,l)\in\Theta$. Next, with the aid of    $\theta_3\theta_4=\beta^*(1-\gamma_1)$ and $\Gamma(\gamma_1,1,\gamma_1)=1-\gamma_1$,
we obtain from $\vv_1^{**}=\gamma_1$ and H\"older's inequality that
$$
 \beta\lambda_2\Gamma(\gamma_1,1,\gamma_1)\mu(|\cdot|^{\theta_3})^{\frac{\theta_4}{1-\gamma_1}} \le \beta\lambda_2(1-\gamma_1)\mu(|\cdot|^{\beta^*}).
$$
Whereafter, we derive from \eqref{E-} that for any $\mu\in  \mathscr P_{\beta^*}^{M^*_2} $,
 \begin{align*}
\pi^\mu(|\cdot|^{\beta^*} )\le \frac{ \Phi( \vv_2^{**},r_0,l)+ \beta\lambda_2(1-\gamma_1)M_2^* }{  \beta\lambda^*/2+\beta\big(1/2-\gamma_1\big)\lambda_2}.
\end{align*}
Whence, we conclude that, for  $\mu\in\mathscr P_{\beta^*}^{M^*_2}$, $\pi^\mu(|\cdot|^{\beta^*} )\le M_2^*$  by
invoking the definition of $M_2^*.$
\end{proof}

\subsection{Proof of Theorem \ref{thm1}}
Inspired by that of  \cite[Theorem 2.2]{Zhang}, we  complete the proof of Theorem \ref{thm1} by means of  Schauder's fixed point theorem in a real Banach space.
Note that the space $ \mathscr P_{\beta^*}$ endowed with the Wasserstein distance  is not a Banach space so that Schauder's fixed point theorem cannot be applied
 directly. To this end, we need to enlarge suitably the space $ \mathscr P_{\beta^*}$ such that the corresponding enlarged space is a Banach space and the associated restriction coincides with a subspace of $ \mathscr P_{\beta^*}$. For this purpose, we introduce the following space: for given $p>0,$
\begin{align*}
\mathcal M_p:=\big\{\mu: \mu \mbox{ is a finite signed measure on } \R^d \mbox{ satisfying } |\mu|(|\cdot|^p)<\8\big\},
\end{align*}
where $|\mu|$ stands for the variation of the signed measure $\mu.$ Under the Kantorovich-Rubinstein
 metric:
\begin{align*}
\mathcal W_{\beta^*/2}(\mu_1,\mu_2):= \sup_{(f,g)\in \mathscr F_{\rm Lip,1\vee (\beta^*/2)}}|\mu_1(f)-\mu_2(g)|^{\frac{2}{2\vee\beta^*}},\quad \mu_1,\mu_2\in\mathcal M_{\beta^*/2},
\end{align*}
 $\mathcal M_{\beta^*/2}$ is a Banach space,  where
\begin{align*}
\mathscr F_{\rm Lip,1\vee (\beta^*/2)}:=\big\{(f,g):  &f,g \mbox{ are Lipschitz  on }   \R^d  \mbox{ satisfying } \\
&f(x)\le g(x)+|x-y|^{1\vee ({\beta^*}/{2})} \hbox{ for } x,y\in\R^d \big\}.
\end{align*}
Obviously,  $\mathcal W_{\beta^*/2}(\mu_1,\mu_2)$ coincides with  $\mathbb W_{\beta^*/2}(\mu_1,\mu_2)$ for $\mu_1,\mu_2\in \mathscr P_{\beta^*/2}$, in which, for $p>0,$ $\mathbb W_{p}$ represents the standard  $L^{p}$-Wasserstein distance defined by
\begin{align}\label{T6}
\mathbb W_{p}(\mu_1,\mu_2)=\inf_{\pi\in\mathscr C(\mu_1,\mu_2)}\bigg(\int_{\R^d\times\R^d}|x-y|^{p}\pi(\d x,\d y)\bigg)^{\frac{1}{1\vee p}}.
\end{align}
Herein, $\mathscr C(\mu_1,\mu_2)$ means the collection of couplings of $\mu_1,\mu_2.$ See \cite[Section 3]{Rachev} for more details.

With the preceding warm-up materials at hand, we are in position to  implement  the proof of Theorem \ref{thm1}.
\begin{proof}[Proof of Theorem $\ref{thm1}$]
Recall that,  for fixed $\mu\in\mathscr P_{\beta^*}$, $\pi^\mu$ is the unique IPM of $(Y_t^\mu)_{t\ge0}$; see Assumption (${\bf A}_2$) for details. Then, if $\mathscr L_{Y_0^\mu}=\pi^\mu$, we have $\mathscr L_{Y_t^\mu}=\pi^\mu$ for all $t\ge0$.
Hence, provided that we can claim that the mapping $\mu\mapsto \Lambda_\mu=\pi^\mu$ has a fixed point (still written as $\mu$)
in the subspace $\mathscr P_{\beta^*}^{M^*}$, it follows that $\mathscr L_{Y_t^\mu}=\pi^\mu=\mu=\mathscr L_{Y_0^\mu}$ for all $t\ge0.$
Consequently, we can conclude that $(X_t)_{t\ge0}$ solving \eqref{E1} possesses a stationary distribution by invoking the weak uniqueness of \eqref{E1}.

 On the basis of the aforementioned analysis,
it remains to verify  that the mapping $\mathscr P_{\beta^*}^{M^*} \ni\mu\mapsto \Lambda_\mu$ has a fixed point. According to Schauder's fixed point theorem (see e.g. \cite[Theorem 8.8]{Deimling}), it suffices to show respectively that (i) $\mathscr P_{\beta^*}^{M^*}$ is    convex  and compact in  $(\mathscr P_{ \beta^*/2}, \mathbb W_{ \beta^*/2})$; (ii) $ \mathscr P_{\beta^*}^{M^*} \ni \mu\mapsto \pi^\mu$ is weakly continuous, which indeed is imposed in Assumption $({\bf A}_3)$.
The convex property of  $\mathscr P_{\beta^*}^{M^*}$ is obvious. To show that $\mathscr P_{\beta^*}^{M^*}$ is a compact subset,
it is sufficient to verify (a)  $ \mathscr P_{\beta^*}^{M^*} $ is weakly compact; (b) $ \lim_{n\to\8}\sup_{\mu\in\mathscr P_{\beta^*}^{M^*}}\mu(|\cdot|^{\frac{1}{2}\beta^*}\I_{\{|\cdot|\ge n\}})=0;$ see, for instance, \cite[Theorem 5.5, p.175]{Chen}.
By Chebyshev's inequality, it is easy to see that $ \mathscr P_{\beta^*}^{M^*} $ is tight so that it is weakly compact.
Next, applying Chebyshev's inequality once more yields that  for any $\mu\in\mathscr P_{\beta^*}^{M^*}$,
\begin{align*}
\mu(|\cdot|^{\frac{1}{2}\beta^*}\I_{\{|\cdot|\ge n\}})\le n^{-\frac{1}{2}\beta^*}\mu(|\cdot|^{\beta^*})\le n^{-\frac{1}{2}\beta^*}M^*.
\end{align*}
Therefore,  (b) is verifiable.
\end{proof}

\subsection{Verification of Assumption $({\bf A}_3)$}
 In this subsection, we plan  to provide  some examinable conditions
  to check $({\bf A}_3)$. For this, we assume the following assumptions:

\begin{enumerate}\it
\item[{\rm$({\bf A}_{31})$}] for
$\mu\in \mathscr P_{\beta^*}^{M^*}$,
 there exists  a function $h_{\mu}:[0,\8)\to[0,\8)$ satisfying  $\lim_{t\to\8}h_{\mu}(t)=0$  such that
\begin{align*}
 \|\pi^\mu - \delta_x P_t^\mu  \|_{\rm{var}}\le  (1+|x|^{\beta^*})  h_{\mu}(t),
\end{align*}
where $M^*>0$ is given in Lemma $\ref{lem2}$, and
$\|\cdot\|_{\rm{var}}$ stands for the total variation norm.

\item[{\rm$({\bf A}_{32})$}] for  given   $n \ge1$ and  $\mu\in \mathscr P_{\beta^* }^{M^*}$,
there exists a   constant  $K_{n,\mu}>0 $ such that for all  $x,y\in\R^d$ with $|x|\vee|y|\le n,$
\begin{align}\label{E9-}
 \<x-y,b(x,\mu)-b(y,\mu)\>\le K_{n,\mu} |x-y|^2,
\end{align}
 and
\begin{align}\label{EW}
\lim_{\bar\mu\in\mathscr P_{\beta^*}^{M^*},\bar\mu\overset{w}{\rightarrow}\mu} \Big(\sup_{x\in\R^d,|x|\le n}|b(x,\bar\mu)-b(x,\mu)|\Big) =0,
\end{align}
where $\bar\mu\overset{w}{\rightarrow}\mu$ means that $\bar\mu$ converges weakly to $\mu.$
\end{enumerate}

Then, we have the following statement, which reveals that the mapping $\mathscr P_{\beta^*}^{M^*}  \ni\mu\mapsto \pi^\mu$  is weakly continuous.

 \begin{proposition}\label{pro1}
Assume that Assumptions $({\bf A}_1)$, $({\bf A}_2)$, $({\bf A}_{31})$ and $({\bf A}_{32})$  hold.
Then, for any $f\in C_b(\R^d)$ and $\mu\in\mathscr P_{\beta^*}^{M^*} $,
\begin{align}\label{E7-}
\lim_{\bar\mu\in\mathscr P_{\beta^*}^{M^*}, \bar\mu\overset{w}{\rightarrow}\mu}|\pi^{\bar\mu}(f)-\pi^{\mu}(f)|=0.
\end{align} In particular, Assumption $({\bf A}_3)$ is satisfied.
\end{proposition}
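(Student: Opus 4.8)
The plan is to prove the quantitative statement \eqref{E7-} directly, splitting the error into a ``time-at-infinity'' part that is small by Assumption $({\bf A}_{31})$ and a ``fixed-time'' part that is small because the drifts $b(\cdot,\bar\mu)$ and $b(\cdot,\mu)$ are close. Fix $f\in C_b(\R^d)$, $\mu\in\mathscr P_{\beta^*}^{M^*}$ and $\vv>0$; by Lemma \ref{lem2}, $\pi^\mu,\pi^{\bar\mu}\in\mathscr P_{\beta^*}^{M^*}$ for every $\bar\mu\in\mathscr P_{\beta^*}^{M^*}$. Using the invariances $\pi^\mu P_t^\mu=\pi^\mu$, $\pi^{\bar\mu}P_t^{\bar\mu}=\pi^{\bar\mu}$ together with $\pi^\mu(\R^d)=\pi^{\bar\mu}(\R^d)=1$, one has for every $t>0$
\[
\pi^{\bar\mu}(f)-\pi^\mu(f)=\int_{\R^d}\big(P_t^{\bar\mu}f-P_t^{\mu}f\big)\,\d\pi^{\bar\mu}+\int_{\R^d}\big(P_t^{\mu}f-\pi^\mu(f)\big)\,\d(\pi^{\bar\mu}-\pi^\mu).
\]
For the second integral, $({\bf A}_{31})$ gives $|P_t^\mu f(x)-\pi^\mu(f)|\le\|f\|_\infty\|\delta_xP_t^\mu-\pi^\mu\|_{\rm{var}}\le\|f\|_\infty(1+|x|^{\beta^*})h_\mu(t)$, and since $\pi^{\bar\mu}(|\cdot|^{\beta^*})\vee\pi^\mu(|\cdot|^{\beta^*})\le M^*$ this integral is at most $2\|f\|_\infty(1+M^*)h_\mu(t)$. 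Crucially only $h_\mu$ --- not $h_{\bar\mu}$ --- appears, so we may fix $t=t_\vv$ so large that this contribution is $<\vv/2$.

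With $t_\vv$ frozen it remains to control $\int_{\R^d}|P_{t_\vv}^{\bar\mu}f-P_{t_\vv}^{\mu}f|\,\d\pi^{\bar\mu}$ as $\bar\mu\overset{w}{\rightarrow}\mu$ in $\mathscr P_{\beta^*}^{M^*}$. Splitting over $\{|x|\le R\}$ and $\{|x|>R\}$ and using $\pi^{\bar\mu}(|\cdot|^{\beta^*})\le M^*$ with Chebyshev's inequality, the outer part is $\le 2\|f\|_\infty M^*R^{-\beta^*}<\vv/4$ for $R=R_\vv$ large, so everything reduces to the local uniform convergence
\[
\lim_{\bar\mu\in\mathscr P_{\beta^*}^{M^*},\ \bar\mu\overset{w}{\rightarrow}\mu}\ \sup_{|x|\le R_\vv}\big|P_{t_\vv}^{\bar\mu}f(x)-P_{t_\vv}^{\mu}f(x)\big|=0.
\]
To obtain this I would couple $(Y_t^{\bar\mu,x})_{t\ge0}$ and $(Y_t^{\mu,x})_{t\ge0}$ with the same driving L\'evy process $(Z_t)_{t\ge0}$, so that $\xi_t:=Y_t^{\bar\mu,x}-Y_t^{\mu,x}$ has no jumps and $\dot\xi_t=b(Y_t^{\bar\mu,x},\bar\mu)-b(Y_t^{\mu,x},\mu)$ with $\xi_0={\bf 0}$. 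For $n>R_\vv$ set $\tau_n:=\inf\{t>0:|Y_t^{\bar\mu,x}|\vee|Y_t^{\mu,x}|>n\}$; on $\{t<\tau_n\}$, writing the drift difference as $[b(Y_t^{\bar\mu,x},\bar\mu)-b(Y_t^{\bar\mu,x},\mu)]+[b(Y_t^{\bar\mu,x},\mu)-b(Y_t^{\mu,x},\mu)]$ and using \eqref{EW} for the first bracket and \eqref{E9-} for the second gives
\[
\frac{\d}{\d t}|\xi_t|^2\le(2K_{n,\mu}+1)|\xi_t|^2+\delta_n(\bar\mu)^2,\qquad \delta_n(\bar\mu):=\sup_{|z|\le n}|b(z,\bar\mu)-b(z,\mu)|,
\]
so Gronwall's inequality yields $\sup_{t\le t_\vv}|\xi_{t\wedge\tau_n}|\le C_{n,\mu,t_\vv}\,\delta_n(\bar\mu)$.

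Since $f$ is uniformly continuous on the closed ball of radius $n$, with modulus $\omega_{f,n}$, on $\{\tau_n>t_\vv\}$ we get $|f(Y_{t_\vv}^{\bar\mu,x})-f(Y_{t_\vv}^{\mu,x})|\le\omega_{f,n}(C_{n,\mu,t_\vv}\delta_n(\bar\mu))$, while on $\{\tau_n\le t_\vv\}$ it is $\le 2\|f\|_\infty$; and a Lyapunov estimate of the kind established in the proof of Lemma \ref{lem1} (applied to $V_\beta$, with $\bar\mu(|\cdot|^{\theta_3})$ controlled by $M^*$ since $\theta_3\le\beta^*$) gives $\P(\tau_n\le t_\vv)\le C_{\mu,t_\vv,R_\vv}\,n^{-\beta}$ uniformly in $|x|\le R_\vv$ and in $\bar\mu\in\mathscr P_{\beta^*}^{M^*}$, the overshoot at $\tau_n$ being handled by the small/large jump truncation $\I_{\{|\cdot|\le l\}},\I_{\{|\cdot|>l\}}$. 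Hence $\sup_{|x|\le R_\vv}|P_{t_\vv}^{\bar\mu}f(x)-P_{t_\vv}^{\mu}f(x)|\le\omega_{f,n}(C_{n,\mu,t_\vv}\delta_n(\bar\mu))+2\|f\|_\infty C_{\mu,t_\vv,R_\vv}n^{-\beta}$; choosing $n=n_\vv$ so large that the last term is $<\vv/8$, and then letting $\bar\mu\overset{w}{\rightarrow}\mu$ so that $\delta_{n_\vv}(\bar\mu)\to0$ by \eqref{EW}, makes the left-hand side eventually $<\vv/4$. Combining the three estimates gives $\limsup_{\bar\mu\overset{w}{\rightarrow}\mu}|\pi^{\bar\mu}(f)-\pi^\mu(f)|\le\vv$, and $\vv\downarrow0$ yields \eqref{E7-}, i.e. Assumption $({\bf A}_3)$.

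The step I expect to be the main obstacle is precisely the local uniform semigroup convergence above, where two points require care: \eqref{E9-} is only a one-sided Lipschitz bound on balls, so the Gronwall comparison must be stopped at $\tau_n$ and one must bound $\P(\tau_n\le t_\vv)$ uniformly in the starting point and in the measure parameter while only assuming $\nu$ has a finite $\beta_*$-th moment for large jumps --- this is what forces the same truncation device as in Lemma \ref{lem1}; and $f$ is merely bounded continuous, not Lipschitz, which is why one localizes to a compact ball (where $f$ is uniformly continuous) and pays $\omega_{f,n}(\cdot)$. A softer alternative, using only weak convergence, would extract weak limit points of $\pi^{\bar\mu}$ (tight by Lemma \ref{lem2}), identify each as an invariant measure of $(Y_t^\mu)_{t\ge0}$ via the same coupling estimate, and conclude by the uniqueness in $({\bf A}_2)$; the route above is preferred since it keeps $t_\vv$ dependent on $\mu$ alone.
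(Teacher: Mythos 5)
Your proof is correct and takes essentially the same route as the paper. The paper's decomposition $|\pi^{\mu}(f)-\pi^{\bar\mu}(f)|\le|\pi^{\mu}(f)-(\pi^{\bar\mu}P_t^{\mu})(f)|+|(\pi^{\bar\mu}P_t^{\mu})(f)-(\pi^{\bar\mu}P_t^{\bar\mu})(f)|$ is algebraically the same as yours (your second term equals minus the paper's first), and the rest of your argument --- the synchronous coupling $Y_t^{\bar\mu,x}-Y_t^{\mu,x}$ killed at the exit time $\tau_n$ of a ball, Gronwall via \eqref{E9-} and \eqref{EW}, and the Lyapunov bound $\P(\tau_n\le t)\lesssim n^{-\beta}$ uniformly over $\mathscr P_{\beta^*}^{M^*}$ --- reproduces the paper's \eqref{EW4}, \eqref{E15} and \eqref{EW3}; the only cosmetic deviations are that the paper first reduces to Lipschitz $f$ rather than invoking a local modulus of continuity, regularizes $|\xi_t|$ as $(\vv+|\xi_t|^2)^{1/2}$ before Gronwall (which you replace by working with $|\xi_t|^2$ and an $\vv$-free Young inequality), and sends $\bar\mu\to\mu$ first and then $n,m,t\to\infty$ instead of your $\vv$-budget ordering.
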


\begin{proof}
To achieve  \eqref{E7-},  it suffices to prove that for any   $f\in{\rm{Lip}}_b(\R^d)\cap C_b(\R^d)$ and  $\mu\in\mathscr P_{\beta^*}^{M^*} $,
\begin{align}\label{EW1}
\lim_{\bar\mu\in\mathscr P_{\beta^*}^{M^*}, \bar\mu\overset{w}{\rightarrow}\mu}|\pi^{\bar\mu}(f)-\pi^{\mu}(f)|=0.
\end{align}
Below, we stipulate  $\mu,\bar\mu\in\mathscr P_{\beta^*}^{M^*} $, and fix $f\in\rm{Lip}_b(\R^d)\cap C_b(\R^d)$.
Via the invariance of $\pi^{\mu}$ and $\pi^{\bar\mu}$,
it is ready to see that for any $t\ge0,$
\begin{equation}\label{WE}
\begin{split}
|\pi^{\mu}(f)-\pi^{\bar\mu}(f)|&\le| \pi^{\mu}(f)-(\pi^{\bar\mu}P_t^{\mu})(f)|+|(\pi^{\bar\mu}P_t^{\mu})(f)-(\pi^{\bar\mu}P_t^{\bar\mu})(f)|\\
&\le \|f\|_\8\|\pi^{\mu } -\pi^{\bar\mu}P_t^{\mu}\|_{\rm{var}}+ \pi^{\bar\mu}(|P_t^{\mu}(f)- P_t^{\bar\mu}(f)|)\\
&\le h_{\mu}(t)\|f\|_\8\big(1+\pi^{\bar\mu}(|\cdot|^{\beta^*})\big)+ \pi^{\bar\mu}(|P_t^{\mu}(f)- P_t^{\bar\mu}(f)|)\\
&\le h_{\mu}(t)\|f\|_\8\big(1+M^*\big)+ \pi^{\bar\mu}(|P_t^{\mu}(f)- P_t^{\bar\mu}(f)|),
\end{split}
\end{equation}
where we employed  Assumption  $({\bf A}_{31})$ in the third inequality. To deal with  the second term in the last line of \eqref{WE}, we define the stopping time for any $n\ge1,$
 $$
\tau_n^\cdot =\big\{t\ge0: |Y_t^{\mu,\cdot}|\vee |Y_t^{\bar\mu,\cdot}|>n\big\}.
$$
Then,  we deduce  that
 for any $n,m\ge1,$
\begin{equation}\label{EW2}
\begin{split}
\pi^{\bar\mu}(|P_t^{\mu}(f)- P_t^{\bar\mu}(f)|)
&\le \|f\|_{\rm{Lip}}\pi^{\bar\mu}\big(\E|Y_{t\wedge\tau_n^\cdot}^{\mu,\cdot}
-Y_{t\wedge\tau_n^\cdot}^{\bar\mu,\cdot}|\big)+2\|f\|_\8\pi^{\bar\mu}(\P(t\ge \tau_n^\cdot))\\
&\le \|f\|_{\rm{Lip}}\pi^{\bar\mu}\big(\E|Y_{t\wedge\tau_n^\cdot}^{\mu,\cdot}
-Y_{t\wedge\tau_n^\cdot}^{\bar\mu,\cdot}| \big)\\
 &\quad+2\|f\|_\8\Big(\sup_{|z|\le m}\P(t\ge \tau_n^z)+m^{-\beta^*}\pi^{\bar\mu}(|\cdot|^{\beta^*})\Big),
\end{split}
\end{equation}
where in the second inequality we used   Chebyshev's inequality to derive   the term $m^{-\beta^*}\pi^{\bar\mu}(|\cdot|^{\beta^*})$.

Obviously, we have
\begin{align*}
\d(Y_t^{\mu,x}-Y_t^{\bar\mu,x})=\big(b(Y_t^{\mu,x},\mu)-b(Y_t^{\bar\mu,x},\bar\mu)\big)\,\d t.
\end{align*}
Subsequently, the chain rule and \eqref{E9-} enable us to deduce  that    for any $\vv>0$ and $t<\tau_n^x,$
\begin{equation*}
\begin{split}
\d (\vv+|Y_t^{\mu,x}-Y_t^{\bar \mu,x}|^2)^{\frac{1}{2}}&=\frac{\<Y_t^{\mu,x}-Y_t^{\bar\mu,x},b(Y_t^{\mu,x},\mu)-b(Y_t^{\bar\mu,x},\bar\mu)\>}
{(\vv+|Y_t^{\mu,x}-Y_t^{\bar\mu,x}|^2)^{\frac{1}{2}}}\,\d t\\
&\le \big(K_{n,\mu}(\vv+|Y_t^{\mu,x}-Y_t^{\bar\mu,x}|^2)^{\frac{1}{2}}+ |b(Y_t^{\bar\mu,x},\mu)-b(Y_t^{\bar\mu,x},\bar\mu)|\big)\d t.
\end{split}
\end{equation*}
Whereafter, applying Gronwall's inequality followed by taking $\vv\downarrow0$ yields that
\begin{equation}\label{EW4}
\begin{split}
\E|Y_{t\wedge\tau_n^x}^{\mu,x}-Y_{t\wedge\tau_n^\cdot}^{\bar\mu,x}|&\le \e^{K_{n,\mu}t}\int_0^t \E|b(Y_{s\wedge\tau_n^x}^{\bar\mu,x},\mu)-b(Y_{s\wedge\tau_n^x}^{\bar\mu,x},\bar\mu)| \d s\\
&\le t\e^{K_{n,\mu}t}\sup_{z\in\R^d,|z|\le n}|b(z,\mu)-b(z,\bar\mu)|.
\end{split}
\end{equation}

Next,
  set $Z_t^x:=|Y_t^{\mu,x}|\vee |Y_t^{\bar\mu,x}|$ for notational brevity.
Note that
\begin{align*}
\{\tau_n^x\le t\}=\Big\{\tau_n^x\le t,\sup_{s\in[0,\tau_n^x]}Z_s^x\ge n/2\Big\} +\Big\{\tau_n^x\le t,\sup_{s\in[0,\tau_n^x]}Z_s^x< n/2\Big\},
\end{align*}
where the second event on the right hand side is empty in terms of the definition of $\tau_n^x$. Therefore,  the following implication:
\begin{align*}
\{\tau_n^x\le t\}\subseteq\Big\{ \sup_{s\in[0,t\wedge\tau_n^x]}Z_s^x\ge n/2\Big\}=\big\{ Z_{\tau_{t,n}^{*,x}}^{x}\ge n/2\big\}
\end{align*}
is valid, where
$$
\tau_{t,n}^{*,x}:=t\wedge\tau_n^x\wedge\inf\{s\ge0: Z_s^x\ge n/2\}.
$$
As a consequence, applying Chebyshev's  inequality, in addition to  the increasing property of $[0,\8)\ni r\mapsto (1+r^2)^{\frac{1}{2}\beta^*}$,
gives us that
\begin{align}\label{E15}
\P(t\ge \tau_n^x)\le \frac{2^\beta}{n^\beta}\Big( \E V_\beta\big(Y_{\tau_{t,n}^{*,x}}^{\mu,x}\big)+\E V_\beta\big(Y_{\tau_{t,n}^{*,x}}^{\bar\mu,x}\big)\Big) .
\end{align}
In particular, by taking
\begin{align*}
r_0=1,\quad \vv_1=2^{-\frac{3+\theta_1}{2}}\lambda_{1}/\lambda_{2}  \quad \vv_2=2^{-\frac{7+\theta_1}{2}}\lambda_{1}
\end{align*} and $l$ large enough so that
$$2^{\frac{\beta}{2}}\nu(|\cdot|^{\frac{\beta}{2}}\I_{\{|\cdot|>l\}})\le 2^{-\frac{7+\theta_1}{2}}\beta\lambda_1,$$
we deduce from   \eqref{E10}  that
\begin{equation}\label{e:add}\begin{split}
(\mathscr L_\mu V_\beta)(x)\le& -  2^{-\frac{5+\theta_1}{2}}\beta\lambda_1 V_{\beta^* }(x)  +\Phi(\vv_2,1,l) +  \beta\lambda_2\Gamma(\gamma_1,\gamma_1/\vv_1,\gamma_1)\mu(|\cdot|^{\theta_3})^{\frac{\theta_4}{1-\gamma_1}},
\end{split}
\end{equation}
where $\mathscr L_\mu$ is the infinitesimal generator of $(Y_t^\mu)_{t\ge0}$.
Thus, for
any $(\mathscr F_t)_{t\ge0}$-stopping time $\tau$, the   estimate above implies that
\begin{align*}
\E V_\beta(Y_{t\wedge\tau}^{\mu,x})\le  V_\beta(x)+t\Big(\Phi( \vv_2,1,l) +  \beta\lambda_2\Gamma(\gamma_1,\gamma_1/\vv_1,\gamma_1)\mu(|\cdot|^{\theta_3})^{\frac{\theta_4}{1-\gamma_1}}\Big).
\end{align*}
 This,   in addition to     \eqref{E15},  yields  that for all $x\in \R^d$,
\begin{equation}\label{EW3}
\begin{split}
\P(t\ge \tau_n^x)\le \frac{2^\beta}{n^\beta}\Big(&2V_\beta(x)+t\Big(2\Phi(\vv_2,1,l)\\
&
 +  \beta\lambda_2\Gamma(\gamma_1,\gamma_1/\vv_1,\gamma_1)\Big(\mu(|\cdot|^{\theta_3})^{\frac{\theta_4}{1-\gamma_1}}+
 \bar\mu(|\cdot|^{\theta_3})^{\frac{\theta_4}{1-\gamma_1}}\Big)\Big)\Big) .
 \end{split}
\end{equation}

Now, by plugging \eqref{EW4} and \eqref{EW3} back into  \eqref{EW2},  making use of  $\theta_3\theta_4\le \beta^*(1-\gamma_1)$ as well as applying
 H\"older's inequality,  we derive from Lemma \ref{lem2} that for   $\mu,\bar\mu\in\mathscr P_{\beta^*}^{M^*}$,
\begin{align*}
|\pi^{\mu}(f)-\pi^{\bar\mu}(f)|
&\le  h_{\mu}(t)\|f\|_\8(1+M^*)\\
&\quad+\|f\|_{\rm{Lip}}t\e^{K_{n,\mu}t}\sup_{z\in\R^d,|z|\le n}|b(z,\mu)-b(z,\bar\mu)| +2\|f\|_\8 M^*m^{-\beta^*}\\
&\quad+\frac{2^{2+\beta}\|f\|_\8}{n^\beta}\big((1+m^2)^{\frac{1}{2}\beta
}+ t \big(\Phi( \vv_2,1,l) +  \beta\lambda_2 M^*\Gamma(\gamma_1,\gamma_1/\vv_1,\gamma_1) \big) \big).
\end{align*}
 At last, the  assertion \eqref{EW1} follows by taking \eqref{EW} into account, recalling $\lim_{t\to\8} h_{\mu}(t)=0$,
 and then approaching subsequently $n\uparrow\8,$ $m\uparrow\8$ and $t\uparrow\8$.
\end{proof}

At the end, we present
remarks on verifying Assumption (${\bf A}_{31}$)
and Assumption (${\bf A}_2$), respectively.

\begin{remark}
\begin{itemize}
\item[{\rm (1)}] According to \eqref{e:add}, one can see that under Assumption $({\bf A}_1)$, the so-called Foster-Lyapunov condition holds for the process $(Y_t^\mu)_{t\ge0}$ by taking the Lyapunov function $V_\beta$. In particular, if the process $(Y_t^\mu)_{t\ge0}$ is open-set irreducible and aperiodic, then the Foster-Lyapunov condition above can yield the exponential ergodicity of the process $(Y_t^\mu)_{t\ge0}$; when $\theta_1\in [1-\beta/2,1)$, the condition would imply the sub-exponential ergodicity of the process $(Y_t^\mu)_{t\ge0}$ in case of $\theta_1\ge1$. Hence, Assumption (${\bf A}_{31}$) is satisfied. The readers can refer to \cite{DFG, MT} for the theories of  Foster-Lyapunov method, and consult  \cite{LS} and references therein for recent development.

    \item[{\rm(2)}] As mentioned in the beginning of the proof for Lemma \ref{lem2}, the process $(Y_t^\mu)_{t\ge0}$ associated with \eqref{EW5} is non-explosive under Assumption $({\bf A}_1)$. Furthermore, according to \cite[Proposition 2.1]{Wang2010}, the process $(Y_t^\mu)_{t\ge0}$ enjoys the $C_b$-Feller property, when \eqref{E9-} in Assumption $({\bf A}_{32})$ is satisfied.
\end{itemize}\end{remark}

\section{Proofs of Theorem \ref{thm3}, Example \ref{exm} and example \ref{exm2}}\label{sec3}
In this section, on the basis of Theorem \ref{thm1},
we first carry out  the proof of Theorem \ref{thm3} still via Schauder's fixed point theorem.
\begin{proof}[Proof of Theorem $\ref{thm3}$]
The proof of Theorem \ref{thm3} is essentially inspired by that of \cite[Theorem 3.1]{Zhang}.
According to Theorem \ref{thm1}, \eqref{E1} has a stationary distribution in $\mathscr P_{\beta^*}^{M^*}$ for a suitable   constant $M^*>0$ since $\mathscr P_{\beta^*}^{M^*}$  is an invariant set  of the mapping $\Lambda_\cdot$; see Lemma \ref{lem2} for more details.
By inspecting the procedure of  the proof  for  Theorem \ref{thm1}, to prove that \eqref{E1} has a stationary distribution in $\mathscr P_{\beta^*,y}^{ M^*,M_*}$,
 it remains  to show that $$\mathscr P_{\beta^*,y}^{M_*}:=\{\mu\in\mathscr P_{\beta^*}  : \mu(|y-\cdot|)\le M_*\}$$ is an invariant set of the mapping $\Lambda_\cdot$  since the rest of the proof  is almost identical.

In the sequel, we fix $\mu\in \mathscr P_{\beta^*,y}^{M_*}$.
By It\^o's formula, it follows that
\begin{align*}
\d V_{\vv,\beta,y}(Y_t^\mu) &=  \beta V_{\vv,\beta-2,y}(Y_t^\mu)\<Y_t^\mu-y,b(Y_t^\mu,\mu)\>\,\d t \\
&\quad +\int_{\{|z|\le 1\}}\big\{V_{\vv,\beta,y}(Y_t^\mu+  z)-V_{\vv,\beta,y}(Y_t^\mu)- \<\nn V_{\vv,\beta,y}(Y_t^\mu),z\> \big\}\,\nu(\d z)\,\d t \\
&\quad  +\int_{\{|z|>1\}}\big\{V_{\vv,\beta,y}(Y_t^\mu+  z)-V_{\vv,\beta,y}(Y_t^\mu) \big\}\,\nu(\d z)\, \d t+\d M_t^{\vv,\mu}\\
&=:\big(I_{1,\vv}(t)+I_{2,\vv}(t)+I_{3,\vv}(t)\big)\,\d t+\d M_t^{\vv,\mu},
\end{align*}
where $  (M_t^{\vv,\mu})_{t\ge0}$ is a martingale.
As done previously (see in particular \eqref{E8} and the proof of \eqref{E11}), we can deduce that
\begin{align*}
 I_{2,\vv}(t)+ I_{3,\vv}(t) \le2^{\frac{\beta}{2}}\nu(|\cdot|^{\frac{\beta}{2}}\I_{\{|\cdot|>1\}})|Y_t^\mu-y|^{\frac{\beta}{2}}+ \frac{1}{2}\beta\vv^{\frac{1}{2}(\beta-2)}\nu(|\cdot|^2\I_{\{|\cdot|\le 1\}})+ \nu(|\cdot|^{\beta}\I_{\{|\cdot|>1\}}).
\end{align*}
This, together with \eqref{S6}, yields that
\begin{equation*}
\d V_{\vv,\beta,y}(Y_t^\mu)\le -g_{\vv }(|Y_t^\mu-y|,\mu(|y-\cdot|))\,\d t+\d M_t^{\vv,\mu}.
\end{equation*}
Recall that $g_{\vv }$
 is convex with respect to the first variable.
Thus,   taking $Y_0^\mu=y$ and using Jensen's inequality twice
enables us to derive that
\begin{align*}
\frac{1}{t}\E V_{\vv,\beta,y}(Y_t^\mu)&\le \frac{\vv^{\frac{1}{2}\beta}}{t}-\E\bigg(\frac{1}{t}\int_0^tg_{\vv}(|Y_s^\mu-y|,\mu(|y-\cdot|)\,\d s \bigg)\\
&\le \frac{\vv^{\frac{1}{2}\beta}}{t}-g_{\vv}\bigg(\frac{1}{t}\int_0^t\E |Y_s^\mu-y|\,\d s,  \mu(|y-\cdot|)\bigg).
\end{align*}
This,
along with the prerequisite that $g_{\vv }$
 is continuous with respect to the first variable and the fact that
 $\Lambda_\mu$ is a weak limit of the measure $\frac{1}{t}\int_0^t (\delta_yP_s^\mu)(\cdot)\,\d s$  as $t\to \infty$,
 implies that
\begin{align*}
g_{\vv }\big(\Lambda_\mu(|y-\cdot|), \mu(|y-\cdot|)\big) \le0.
\end{align*}
Since, for fixed $r_1\in[0,\8), $  $[0,\8)\ni r_2\mapsto g_{\vv }(r_1,r_2)$ is decreasing,    we obtain that for $\mu\in \mathscr P_{\beta^*, y}^{M_*}$,
\begin{align*}
g_{\vv }\big(\Lambda_\mu(|y-\cdot|), M_*\big) \le0.
\end{align*}
As a consequence,  due to the precondition that $g_{\vv}(r,M_*)>0$ for all $r>M_*$, we arrive at  $\Lambda_\mu(|y-\cdot|)\le M_*$ so that
$\mathscr P_{\beta^*, y}^{M_*}$ is an invariant set of $\Lambda_\cdot.$

Now we proceed to verify the multiplicity of stationary distributions.
Let $\mu_i\in\mathscr P_{\beta^*, y_i}^{ M^*,M_*}$, $1\le i\le k$,   be  stationary distributions of \eqref{E1}. If the statement
\begin{align}\label{S13}
\mu_i(|\cdot-y_i|
\ge |y_i-y_j|/2)<1/2,\quad 1\le i\neq j\le k
\end{align}
is valid,  then
 for all $1\le i\neq j\le k$, $\mu_i(|\cdot-y_i|<
|y_i-y_j|/2)\ge1/2$.
That is to say, $\mu_i\neq \mu_j$ for all $1\le i\neq j\le k$, provided that the assertion  in \eqref{S13}
is verified. Indeed, by Chebyshev's inequality,
\eqref{S13} is verifiable   by noting that
\begin{align*}
\mu_i(|\cdot-y_i|\ge|y_i-y_j|/2)
 \le \frac{2M_*}{|y_i-y_j|}<1/2,
\end{align*}
where the first  inequality is available  due to $\mu_i\in\mathscr P_{\beta^*,y_i}^{M_*},$ and the second inequality is true
by taking the assumption that $  M_*<\frac{1}{4}\min_{1\le i\neq j\le k}|y_i-y_j|$ into account.
\end{proof}

Next, we move to
  finish respectively the proofs of Examples \ref{exm} and \ref{exm2}.

\begin{proof}[Proof of Example $\ref{exm}$]
Below, to guarantee that \eqref{E2} holds true,  we take $\beta\in(1,\alpha)$.
For notational brevity, we set
\begin{align*}
b(x,\mu):=-\lambda x(x-a_1)(x-a_2)-\kk\mu(x-\cdot),\quad x\in\R, \,\mu\in \mathscr P_1.
\end{align*}
It is easy to see that there exists a constant $ C_1>0$ such that for any $x \in\R$ and $\mu\in\mathscr P_{\beta+2}$,
 \begin{align}\label{SD}
 xb(x,\mu)\le -\frac{1}{2}\lambda|x|^4+\kk|x|\mu(|\cdot|)+C_1.
\end{align}
Whence, Assumption (${\bf A}_1$)(i) holds true with $\theta_1=3$ and
 $\theta_2=\theta_3=\theta_4=1$. In particular, $\beta^*=\beta+2$ and $\gamma_1=(\beta-1)/(\beta+2)$.
 On the other hand, according to \eqref{SD} and \cite[Theorem 7.4]{XZ}, Assumptions (${\bf A}_2$) and (${\bf A}_{31}$) are satisfied. Owing to the validity of Assumptions (${\bf A}_1$) and (${\bf A}_2$), Lemma \ref{lem2} implies that there exists an $M^*>0$   such that $\mathscr P_{\beta+2}^{M^*}$ is an invariant set of the mapping $\Lambda_\cdot$ defined as in \eqref{S10}.
 Additionally, a direct calculation shows that there exists a constant $K>0$ such that
\begin{align*}
(x-y)(b(x,\mu)-b(y,\mu))\le K|x-y|^2,\quad x,y\in\R,~\mu\in\mathscr P_1.
\end{align*}
Furthermore,
 note that for any $x\in\R$, $\mu,\bar\mu\in\mathscr P_{\beta+2}^{M^*} $ and $n\ge1,$
\begin{align*}
 |b(x,\mu)-b(x,\bar\mu)|
 \le &\kk \int_\R (|z_1-z_2|\wedge n)\pi(\d z_1,\d z_2)\\
 & +\kk \int_\R ( |z_1|+|z_2|)(\I_{\{|z_1|>n/2\}}+\I_{\{|z_2|>n/2\}})\pi(\d z_1,\d z_2),
\end{align*}
where $\pi\in\mathscr C(\mu,\bar\mu)$. Correspondingly, for any $x\in\R$ and $\mu,\bar\mu\in\mathscr P_{\beta+2}^{M^*} $,
\begin{equation}\label{ET}
\begin{split}
 |b(x,\mu)-b(x,\bar\mu)|
 \le &\kk \mathbb W_{1,n}(\mu,\bar\mu) +\kk\big(\mu(|\cdot|\I_{\{|\cdot|>n/2\}})+\bar\mu(|\cdot|)\mu(\I_{\{|\cdot|>n/2\}})\big)\\
 & +\kk\big(\mu(|\cdot|)\bar\mu(\I_{\{|\cdot|>n/2\}})+\bar\mu(|\cdot|\I_{\{|\cdot|>n/2\}}) \big),
\end{split}
\end{equation}
where $\mathbb W_{1,n}$ is the  Wasserstein distance induced by the metric $\rho_n(x-y):=|x-y|\wedge n$ for $ x,y\in\R.$
Thus, Chebyshev's inequality, along with $\mu,\bar\mu\in\mathscr P_{\beta+2}^{M^*} $,  enables us to deduce that Assumption (${\bf A}_3$) is provable.
Based on the analysis above, we conclude
that \eqref{exm-} has a stationary distribution in $\mathscr P_{\beta+2}^{M^*} $ by taking  Theorem \ref{thm1} into consideration.

In the sequel, we fix $(\kk,\lambda,\vv)\in(0,\8)^3$ and $\beta\in(1,\alpha)$ satisfying \eqref{-WE} and \eqref{WE2}.
Notice  that for all $x\in\R$ and $\mu\in\mathscr P_1$,
\begin{align*}
(x-a_1)b(x,\mu)&=-\lambda(x-a_1)^4 -\lambda(2a_1-a_2)(x-a_1)^3  -(\lambda a_1(a_1-a_2)+\kk)(x-a_1)^2  \\
&\quad-\kk(x-a_1)\mu(a_1-\cdot).
\end{align*}
Whence, we have for $\beta\in(1,2),$
\begin{align*}
&\beta(\vv+|x-a_1|^2)^{\frac{1}{2}(\beta-2)}(x-a_1) b(x,\mu)\\
&\le -\lambda\beta|x-a_1|^\beta\big(|x-a_1|^{ 2}     -|2a_1-a_2 ||x-a_1|  +  a_1(a_1-a_2)+\kk/\lambda-1  \big)\\
&\quad+\kk\beta |x-a_1|^{\beta-1}\mu(|a_1-\cdot|)+\vv^{\frac{1}{2}\beta} \beta(\lambda a_1(a_1-a_2)+\kk).
\end{align*}
This subsequently implies that
\begin{align*}
\beta(\vv+|x-a_1|^2)^{\frac{1}{2}(\beta-2)}&(x-a_1) b(x,\mu)+2^{\frac{\beta}{2}}\nu(|\cdot|^{\frac{\beta}{2}}\I_{\{|\cdot|>1\}})|x-a_1|^{\frac{\beta}{2}}+\nu(|\cdot|^{\beta}\I_{\{|\cdot|>1\}})\\
&+ \frac{\beta }{2}\vv^{\frac{1}{2}\beta-1}\nu(|\cdot|^2\I_{\{|\cdot|\le 1\}}) \le -g_{\vv,a_1,a_2}(|x-a_1|,\mu(|a_1-\cdot|)),
\end{align*}
where for $a,b\in\R$ and $r_1,r_2>0$
\begin{align*}
g_{\vv,a,b}(r_1,r_2):&=\lambda\beta r_1^\beta\big(r_1^{ 2}     -|2a -b| r_1  +  a (a -b)+\kk/\lambda-1 \big)-\kk\beta r_1^{\beta-1}r_2 -2^{\frac{\beta}{2}}\nu(|\cdot|^{\frac{\beta}{2}}\I_{\{|\cdot|>1\}})r_1^{\frac{\beta}{2}}\\
&\quad-\vv^{\frac{1}{2}\beta}\beta(\lambda a (a -b)+\kk)-\nu(|\cdot|^{\beta}\I_{\{|\cdot|>1\}})- \frac{\beta }{2}\vv^{\frac{1}{2}\beta-1}\nu(|\cdot|^2\I_{\{|\cdot|\le 1\}}).
\end{align*}
A direct calculation shows that
\begin{equation}\label{WW-}
\begin{split}
\frac{\d^2}{\d r_1^2}g_{\vv,a_1,a_2}(r_1,r_2)=&\lambda\beta r_1^{\beta-2}\Big[(2+\beta)(1+\beta)r_1^2-\beta(1+\beta)|2a_1-a_2|r_1 \\
&\qquad\qquad+(a_1(a_1-a_2)+\kk/\lambda-1)\beta(\beta-1) \Big]\\
&+\kk\beta(\beta-1)(2-\beta ) r_1^{\beta-3}r_2  +\beta 2^{\frac{\beta}{2}-1}\nu(|\cdot|^{\frac{\beta}{2}}\I_{\{|\cdot|>1\}})(1-\beta/2 )r_1^{\frac{\beta}{2}-2}.
\end{split}
\end{equation}
Note obviously  that
\begin{align*}
&(2+\beta)(1+\beta)r_1^2-\beta(1+\beta)|2a_1-a_2|r_1  +(a_1(a_1-a_2)+\kk/\lambda-1)\beta(\beta-1) \\
&= (2+\beta)(1+\beta)\bigg(\Big(r_1-\frac{\beta |2a_1-a_2| }{2( 2+\beta)}\Big)^2\\
&\quad\quad\quad\qquad\qquad\qquad+\frac{(a_1(a_1-a_2)+\kk/\lambda-1)\beta(\beta-1)}{(2+\beta)
(1+\beta)}-\frac{\beta^2 |2a_1-a_2|^2 }{4( 2+\beta)^2}\bigg).
\end{align*}
This, together with \eqref{WW-}, leads to the convex property of $r_1\mapsto g_{\vv,a_1,a_2}(r_1,r_2)$ provided that
\begin{align}\label{WW*}
\frac{(a_1(a_1-a_2)+\kk/\lambda-1)\beta(\beta-1)}{(2+\beta)
(1+\beta)}-\frac{\beta^2 |2a_1-a_2|^2 }{4( 2+\beta)^2}\ge0,
\end{align}
which is in turn ensured by  \eqref{-WE}.

 By
interchanging $a_1$ and $a_2$, we can also  conclude that $r_1\mapsto g_{\vv,a_2,a_1}(r_1,r_2)$ is also convex.
It is also easy to see from the arguments above that under \eqref{-WE} all the functions $r_1\mapsto g_{\vv,0,a_1}(r_1,r_2)$, $r_1\mapsto g_{\vv,a_1,0}(r_1,r_2)$, $r_1\mapsto g_{\vv,0,a_2}(r_1,r_2)$ and $r_1\mapsto g_{\vv,a_2,0}(r_1,r_2)$ are convex too.

Below, we fix $(a,b)\in\{(a_1,a_2), (a_2,a_1), (0,a_1), (a_1,0), (0,a_2),(a_2,0)\}$. By virtue of  \eqref{WE2}, we find that
\begin{align*}
g_{\vv,a,b}(0,r_0)<0\quad \mbox{ and } \quad  g_{\vv,a,b}(r_0,r_0)>0.
\end{align*}
This, along with the convex property of $r\mapsto g_{\vv,a,b}(r,r_0)$ (which yields  the uniqueness of the local minimizers), implies that
$g_{\vv,a,b}(r,r_0)>0$ for any $r\ge r_0.$ Furthermore, due to $r_0\in(0,(|a_1|\wedge|a_2|)/4)$ and $a_1a_2<0$, it is ready to see that
\begin{align*}
r_0<\frac{1}{4}(|a_1|\wedge|a_2|)<\frac{1}{4}|a_1-a_2|.
\end{align*}
Consequently, by applying Theorem \ref{thm3} with $M_*=r_0$, \eqref{exm-} has at least three distinct stationary distributions.
\end{proof}

\begin{proof}[Proof of Example $\ref{exm2}$]
Since the  routine  of proof is parallel to that of Example \ref{exm}, in the sequel, we merely outline some of the key points.

 Let for any $x\in\R^d$ and $\mu\in\mathscr P_1$,
\begin{align*}
b(x,\mu)=-\frac{\lambda}{2}\big((x-y_1)|x-y_2|^2+(x-y_2)|x-y_1|^2\big)-\kk\int_{\R^d}(x-y)\,\mu(\d y).
\end{align*}
({\bf a}) We first show that there exists a constant $C_0>0$ such that
\begin{align*}
\<x,b(x,\mu)\>\le -\frac{1}{2}\lambda|x|^4+\kk|x|\mu(|\cdot|)+C_0, \quad x\in\R^d,~\mu\in\mathscr P_1.
\end{align*}
Indeed, this can be achieved by noting that
\begin{align*}
\<x,b(x,\mu)\>
&=-\frac{1}{2}\lambda\big(2|x|^4-2|x|^2\<x,y_1+y_2\>+|x|^2(|y_1|^2+|y_2|^2)\\
&\quad\quad\quad-\<x,y_1+y_2\> |x|^2   +4\<x,y_1\>    \<x,y_2\>  -\<x,y_1|y_2|^2+y_2| y_1|^2 \> \big)\\
&\quad\quad -\kk|x|^2+\kk\mu( \<x,\cdot\>),
\end{align*}
where the other terms are of the lower order with contrast to  the leading term $\lambda|x|^4$.

({\bf b})
Next, we assert that there exists a constant $C_1>0$ such that
\begin{align*}
\<x-y,b(x,\mu)-b(y,\mu)\>\le C_1|x-y|^2,\quad x,y\in\R^d,~\mu\in\mathscr P_1.
\end{align*}
In fact, the above estimate can be established by noticing the following facts:  for all $x,y,z\in\R^d$ and $\mu\in\mathscr P_1, $
\begin{align*}
\<x-y,b(x,\mu)-b(y,\mu)\>&=-\frac{1}{2}\lambda\<x-y,\Gamma(x,y)\>-\kappa|x-y|^2,\\
-\<x-y,|x|^2x-|y|^2y\>&\le -\frac{1}{6}(|x|^2+|y|^2)|x-y|^2,\\
 \big|\<x-y,\<x,z\>x-\<y,z\>y\>\big|
& \le  |x-y|^2|z|(|x|+3|y| ),\\
  \<x-y,z\>\big||x|^2-|y|^2\big|&\le |x-y|^2|z|(|x|+|y|),
\end{align*}
where
\begin{align*}
\Gamma(x,y): &=2(|x|^2x-|y|^2y)-2(\<x,y_2\>x-\<y,y_2\>y)-2(\<x,y_1\>x-\<y,y_1\>y)\\
 &\quad+(|y_1|^2+|y_2|^2)(x-y)-(|x|^2-|y|^2)(y_1+y_2) +2\<x-y,y_1\>y_2+2\<x-y,y_2\>y_1.
\end{align*}

({\bf c})
Subsequently, we prove that for $y\in\{y_1,y_2\}$,
\begin{equation}\label{WQ}
\begin{split}
&\beta(\vv+ |x-y|^2)^{\frac{1}{2}(\beta-2)} \<x-y,b(x,\mu)\> \\
&\quad+ 2^{\frac{\beta}{2}}\nu(|\cdot|^{\frac{\beta}{2}}\I_{\{|\cdot|>1\}})|x-y|^{\frac{\beta}{2}}+\frac{1}{2}\beta\vv^{\frac{1}{2}(\beta-2)}
\nu(|\cdot|^2\I_{\{|\cdot|\le 1\}})+\nu(|\cdot|^{\beta}\I_{\{|\cdot|>1\}})\\
&\le -g_{\vv}(|x-y|,\mu(|y-\cdot|)),
\end{split}
\end{equation}
where for $r_1,r_2>0,$
\begin{align*}
g_{\vv}(r_1,r_2):&=\lambda\beta\Big( r_1^{\beta+2}-\frac{3}{2} r_1^{\beta+1}|y_1-y_2|+\Big(\frac{1}{2}|y_1-y_2|^2+\frac{\kk}{\lambda}-\vv\Big) r_1^\beta\\
&\quad\quad\quad -\Big(\frac{1}{2}|y_1-y_2|^2+\frac{\kk}{\lambda}\Big)\vv^{\frac{\beta}{2}}-\frac{\kk}{\lambda} r_1^{\beta-1}r_2\Big)\\
&\quad- 2^{\frac{\beta}{2}}\nu(|\cdot|^{\frac{\beta}{2}}\I_{\{|\cdot|>1\}})r_1^{\frac{\beta}{2}}-\frac{1}{2}\beta\vv^{\frac{1}{2}(\beta-2)}
\nu(|\cdot|^2\I_{\{|\cdot|\le 1\}})-\nu(|\cdot|^{\beta}\I_{\{|\cdot|>1\}}).
\end{align*}
Actually, \eqref{WQ} can be available due to the fact that  for $y\in\{y_1,y_2\}$,
\begin{align*}
&\beta(\vv+ |x-y |^2)^{\frac{1}{2}(\beta-2)} \<x-y ,b(x,\mu)\>\\
&\le -\lambda\beta\Big( |x-y |^{\beta+2}-\frac{3}{2}|x-y |^{\beta+1}|y_1-y_2|+\Big(\frac{1}{2}|y_1-y_2|^2+\frac{\kk}{\lambda}-\vv\Big) |x-y |^\beta\\
&\quad\quad\quad -\Big(\frac{1}{2}|y_1-y_2|^2+\frac{\kk}{\lambda}\Big)\vv^{\frac{\beta}{2}}-\frac{\kk}{\lambda} |x-y |^{\beta-1}\mu(|\cdot-y|)\Big).
\end{align*}

({\bf d})
Whereafter, we show that $ g_\vv(r ,r_0)\ge0$ for any $r\ge r_0$. To this end, we first verify that
 $r_1\mapsto g_\vv(r_1,r_2)$ is convex. To show this, we find that
\begin{align*}
\frac{\d^2}{\d r_1^2}g_\vv(r_1,r_2)&=\lambda\beta r_1^{\beta-2}\bigg( (\beta+2)(\beta+1)r_1^2-\frac{3}{2}(\beta+1)\beta|y_1-y_2|  r_1\\
&\quad\quad\quad\quad\quad+  \beta(\beta-1)\Big(\frac{1}{2}|y_1-y_2|^2+\frac{\kk}{\lambda}-\vv\Big) \bigg)\\
&\quad+\kk\beta(\beta-1)(2-\beta )r_1^{\beta-3}r_2+ 2^{\frac{\beta}{2}-1} \beta (1-\beta/2)\nu(|\cdot|^{\frac{\beta}{2}}\I_{\{|\cdot|>1\}})r_1^{\frac{\beta}{2}-2}.
\end{align*}
Whence,  the convex property of $r_1\mapsto g_\vv(r_1,r_2)$ can be ensured by \eqref{EQ1}.
Thus, since
\begin{align*}
g_\vv(0,r_0)<0\quad \mbox{ and } \quad g_\vv(r_0,r_0)>0
\end{align*}
by taking advantage of \eqref{WQ2}, the assertion that $ g_\vv(r ,r_0)\ge0$ for any $r\ge r_0$ is provable  by using the convex property of $r\mapsto g_\vv(r,r_0)$.

Based on the analysis above, we conclude that \eqref{WQ2} has at least two stationary distributions by repeating exactly the remaining  details presented in the proof of Example \ref{exm}.
\end{proof}

\section{proofs of theorem  \ref{thm4}, Corollary \ref{cor}  and Example \ref{exa3}}\label{sec4}
This section is mainly devoted to the proof of Theorem   \ref{thm4}, where the ergodicity of $(Y_t^\mu)_{t\ge0}$
under the weighted total variation distance plays an important role. Also,
the proofs of Corollary \ref{cor} and Example \ref{exa3} are to be  finished in this section. In addition,
as far as  $(\mathscr L_{Y_t^\mu})_{t\ge0}$ is concerned,
 Duhamel's formula and gradient estimate are crucial to handle the Lipschitz continuity under the weighted total variation distance
with respect to the frozen distributions.

\begin{proof}[Proof of Theorem $\ref{thm4}$]
Suppose that Assumptions $({\bf A}_1)$ and $({\bf A}_2)$ hold.
By invoking  Lemma \ref{lem2}, there exists an $M^*>0$ such that $\mathscr P_{\beta^*}^{M^*}$ is an invariant subset of the mapping $\Lambda_{\mu}: \mathscr P_{\beta^*}\rightarrow\mathscr P_{{\beta^*}}$ via the relationship $\Lambda_\mu=\pi^\mu$ for $\mu\in\mathscr P_{{\beta^*}}$.

A basic fact is that
 $\mu\in\mathscr P_{\beta^*}^{M^*} $ is a stationary distribution  of \eqref{E1} if and only if $\mu\in\mathscr P_{\beta^*}^{M^*} $ is a fixed point of $\Lambda_\mu$ (i.e., $\Lambda_\mu=\mu$). Based on this,  it amounts to proving that
the mapping $\Lambda_\cdot$ possesses a unique fixed point in order to show  uniqueness of stationary distributions of \eqref{E1}. To achieve the uniqueness of fixed points, we leverage on the Banach's contraction mapping theorem in lieu of Schauder's fixed point theorem.
Since $\Lambda_\cdot$ is a mapping from $\mathscr P_{\beta^*}^{M^*}$ to itself,
to show that $\Lambda_\cdot$ has a unique fixed point in $\mathscr P_{\beta^*}^{M^*}$, it remains to verify that $\Lambda_\cdot$ is contractive, i.e., there exists  $\kk\in(0,1)$ such that for all $\mu_1,\mu_2\in\mathscr P_{\beta^*}^{M^*}$,
\begin{align}\label{WW--}
\|\Lambda_{\mu_1}-\Lambda_{\mu_2}\|_U\le \kk\|\mu_1-\mu_2\|_U.
\end{align}

Below, we shall fix $\mu_1,\mu_2\in \mathscr P_{\beta^*}^{M^*}$. By means of  the invariance of $\Lambda_{\mu_1}$ and $\Lambda_{\mu_2}$,   it follows readily from the triangle inequality  that for any $t\ge0,$
\begin{align}\label{WT1}
\|\Lambda_{\mu_1}-\Lambda_{\mu_2}\|_U\le \|\Lambda_{\mu_1}P_t^{\mu_1}-\Lambda_{\mu_2}P_t^{\mu_1}\|_U
+\|\Lambda_{\mu_2}P_t^{\mu_1}-\Lambda_{\mu_2}P_t^{\mu_2}\|_U.
\end{align}
 Under Assumption
 $({\bf A}_5)$, one has
\begin{align*}
\|\Lambda_{\mu_1}P_t^{\mu_1}-\Lambda_{\mu_2}P_t^{\mu_1}\|_U\le C_{M^*}\e^{-c_{M^*}t}
\|\Lambda_{\mu_1} -\Lambda_{\mu_2} \|_U,\quad t>0.
\end{align*}
This, besides   \eqref{WT1}, leads to
\begin{align}\label{WT4}
\big(1- C_{M^*}\e^{-c_{M^*}t}\big) \|\Lambda_{\mu_1}-\Lambda_{\mu_2}\|_U\le \|\Lambda_{\mu_2}P_t^{\mu_1}-\Lambda_{\mu_2}P_t^{\mu_2}\|_U.
\end{align}
Provided that there exists a locally bounded function $\phi:(0,\8)\to[0,\8)$ such that
\begin{align}\label{WT0}
\|\Lambda_{\mu_2}P_t^{\mu_1}-\Lambda_{\mu_2}P_t^{\mu_2}\|_U\le   K_1\phi_t\|\mu_1-\mu_2\|_U,\quad t>0,
\end{align}
thus, \eqref{WT4} gives   us that
\begin{align*}
\|\Lambda_{\mu_1}-\Lambda_{\mu_2}\|_U\le  c_1 K_1 \|\mu_1-\mu_2\|_U,
\end{align*}
where $K_1$ is the constant in \eqref{WT5} and
\begin{align*}
c_1:=\inf_{t>0: C_{M^*}\e^{-c_{M^*}t}<1}\phi_t \big(1- C_{M^*}\e^{-c_{M^*}t}\big)^{-1}<\infty.
\end{align*}
Accordingly, \eqref{WW--} is valid  once
   $K_1>0$ is enough  small such that   $\kk:=c_1K_1\in(0,1).$

In the sequel,  it remains to verify the statement  \eqref{WT0}. Note that  for all $t>0$,
\begin{align*}
\|\Lambda_{\mu_2}P_t^{\mu_1}-\Lambda_{\mu_2}P_t^{\mu_2}\|_U
&=\sup_{|\psi|\le U}\Big|\int_{\R^d}\big((P_t^{\mu_1}\psi)(x)-(P_t^{\mu_2}\psi)(x)\big) \Lambda_{\mu_2} (\d x)\Big|.
\end{align*}
Furthermore, we can derive from \cite[Theorem 1]{Y} that
\begin{equation}\label{WT2}
\begin{split}
(P_t^{\mu_1}\psi)(x)-(P_t^{\mu_2}\psi)(x)
&=\int_0^tP_{t-s}^{\mu_2}\big((\mathscr L_{\mu_1}(P_s^{\mu_1}\psi)) -(\mathscr L_{\mu_2}(P_s^{\mu_1}\psi))  \big)(x)\,\d s\\
&=\int_0^t\int_{\R^d}\<\nn(P_s^{\mu_1}\psi)(y),b(y,\mu_1)-b(y,\mu_2)\>\,P_{t-s}^{\mu_2}(x,\d y)\,\d s\\
&\le K_1\|\mu_1-\mu_2\|_U\int_0^t\int_{\R^d}\big|\nn(P_s^{\mu_1}\psi)(y)\big|V_{\beta\wedge\beta^*-\beta_0}(y) \,P_{t-s}^{\mu_2}(x,\d y)\,\d s,
\end{split}
\end{equation}
where we utilized \eqref{WT5}  in the last display.

For given  $n\ge1,$  set  $\psi_n(x):=n\wedge \psi(x)$.  Assumption (${\bf A}_6$) and the fact that
 for any   $t\ge0$ and $x,y\in\R^d,$
\begin{align*}
(P_t^{\mu_1}\psi_n)(y)-(P_t^{\mu_1}\psi_n)(x)
 =\int_0^1\<\nn (P_t^{\mu_1}\psi_n)(x+s(y-x)),y-x\>\,\d s
\end{align*}
yield  that with $p=(\beta\wedge\beta^*)/\beta_0>1$
\begin{align}\label{WT7}
\big|(P_t^{\mu_1}\psi_n)(y)-(P_t^{\mu_1}\psi_n)(x)\big|
 \le \frac{\varphi_p(t)}{(1\wedge t)^{\theta}}\int_0^1\big(\E V_{\beta_0 p}(Y_t^{\mu_1,\cdot}(x+s(y-x)))\big)^{{1}/{p}} \,\d s\,|y-x|,
\end{align}
in which we also used the prerequisite $|\psi|  \le U= V_{\beta_0} $   in the inequality.
On the other hand, according to \eqref{E10}, we can find that there is a constant $c_2>0$ so that
\begin{align}\label{WT3}
 \E V_{ \beta}(Y_t^{\mu_1,x}) \le   V_{ \beta}(x)+ c_2t,\quad x\in\R^d, t\ge0.
\end{align}  In particular,
 \begin{equation}\label{WT3-}\E V_{ \beta\wedge \beta^*}(Y_t^{\mu_1,x}) \le  ( V_{ \beta}(x)+ c_2t)^{(\beta\wedge\beta^*)/\beta}\le  V_{ \beta\wedge\beta^*}(x)+ (c_2t)^{(\beta\wedge\beta^*)/\beta} ,\quad x\in\R^d, t\ge0.\end{equation}
This, along with \eqref{WT7} and the dominated convergence theorem,  further implies that for  $x,y\in\R^d$ and $t>0$,
\begin{align*}
|(P_t^{\mu_1}\psi)(y)-(P_t^{\mu_1}\psi)(x)|&\le \frac{\varphi_p(t)}{(1\wedge t)^\theta}\int_0^1\big(V_{\beta\wedge\beta^*}(x+s(y-x))+(c_2t)^{(\beta\wedge\beta^*)/\beta} \big)^{\beta_0/(\beta\wedge \beta^*)} \,\d s\, |y-x|.
\end{align*}
Whence, for all $x\in \R^d$ and $t>0$,
\begin{align}\label{EY}
\big|\nn P_t^{\mu_1}\psi \big|(x) \le \frac{\varphi_p(t)}{(1\wedge t)^{\theta}} \big((c_2t)^{\beta_0/\beta}+V_{\beta_0}(x) \big) .
\end{align}
Plugging the previous estimate  back into \eqref{WT2}  leads to
\begin{equation*}
\begin{split}
&(P_t^{\mu_1}\psi)(x)-(P_t^{\mu_2}\psi)(x)\\
&\le  K_1 \|\mu_1-\mu_2\|_U   \int_0^t \int_{\R^d}\frac{\varphi_p(s)\big((c_1s)^{\beta_0/\beta}+V_{\beta_0}(y)\big)V_{\beta\wedge\beta^*-\beta_0}(y)}{(1\wedge s)^{\theta}} P_{t-s}^{\mu_2}(x,\d y) \d s\\
&\le  K_1\|\mu_1-\mu_2\|_U \Big(1+(c_2t)^{\frac{\beta_0}{\beta}}\Big)\int^t_0 \frac{\varphi_p(s)}{(1\wedge s)^{\theta}} V_{\beta\wedge\beta^*}(y)\,P_{t-s}^{\mu_2}(x,\d y)\,\d s \\
&\le  K_1 \|\mu_1-\mu_2\|_U\Big(1+(c_2t)^{\frac{\beta_0}{\beta}}\Big)\Big(1+(c_2t)^{\frac{\beta\wedge\beta^*}{\beta}}\Big)\int^t_0 \frac{\varphi_p(s)}{(1\wedge s)^{\theta}}  \d s V_{\beta\wedge\beta^*}(x),
\end{split}
\end{equation*} where in the last inequality we used \eqref{WT3-} again.
With this at hand,
we conclude that \eqref{WT0} is available  by taking
\begin{align*}
\phi_t:=   (1+M^*)^{(\beta\wedge\beta^*)/\beta^*}\Big(1+(c_2t)^{\frac{\beta_0}{\beta}}\Big)\Big(1+(c_2t)^{\frac{\beta\wedge\beta^*}{\beta}}\Big) \int_0^t \frac{\varphi_p(s)}{(1\wedge s)^{\theta}}  \d s.
\end{align*}

Below, we fix  $\mu\in\mathscr P_\beta$.
For any $s\ge0$, let $(X_{s,t}^\mu)_{t\ge s}$ be the solution to  \eqref{E1} initiating from  time $s$ with $\mathscr L_{X_{s,s}^\mu}=\mu$, and  define
\begin{align*}
P_{s,t}^*\mu=\mathscr L_{X_{s,t}^\mu},\quad t\ge s.
\end{align*}
In case of $s=0$, we shall write $X_{t}^\mu$ and
$P_{ t}^* $ instead of $X_{0,t}^\mu$ and
$P_{0, t}^* $, respectively, for the sake of notational convenience.
As shown in \cite[(1.10)]{Wang}, $(P_{s,t}^*)_{t\ge s\ge0}$ is a nonlinear semigroup satisfying
\begin{align}\label{WY4}
P_{s,t}^*=P_{r,t}^*P_{s,r}^* \quad \mbox{ and }  \quad P_{s,t}^*=P_{t-s}^*, \quad 0\le s\le r\le t.
\end{align}

Let $\pi$ be the unique stationary distribution of the SDE \eqref{E1}. By the invariance of $\pi$, Assumption (${\bf A}_5$) and
 the triangle inequality, it is easy to see that for any $t\ge0,$
\begin{align*}
\|P_t^*\mu -\pi\|_U&\le \|P_t^*\mu  - (P_t^\pi)^*\mu\|_U+
\| (P_t^\pi)^*\mu -  (P_t^\pi)^*\pi\|_U\\
&\le \|P_t^*\mu  -(P_t^\pi)^*\mu\|_U+
C_{M^*}\e^{-c_{M^*}t}\|\mu -\pi\|_{U},
\end{align*}
in which $(P_t^\pi)^*\mu:=\mu P_t^\pi$.
For  $\hat \mu\in\mathscr P,$ let $( (\hat P_{s,t}^{\mu})^*\hat\mu)_{t\ge s\ge0}$ be  the law of the linear Markov process
$(X_{s,t}^{\mu,\hat \mu})_{t\ge s\ge0}$, which    solves   the following decoupled SDE:
\begin{align}\label{WY7}
\d X_{s,t}^{\mu,\hat\mu}=b(X_{s,t}^{\mu,\hat \mu}, P_{s,t}^*\mu)\,\d t+\d Z_t,\quad t\ge s\ge0
\end{align} with $\mathscr L_{X_{s,s}^{\mu,\hat \mu}}=\hat\mu$.
Since the SDE \eqref{E1} is (weakly) well-posed, we thus have $P_{s,t}^*\mu= ( \hat P_{s,t}^{\mu })^*\mu$.
Again we write
$(\hat P^\mu_{t})^* $ instead of
$(\hat P^\mu_{0,t})^*$ for simplicity. Then,
\begin{align}\label{WY2}
\|P_t^*\mu -\pi\|_U\le \|( \hat P_{t}^{\mu })^*\mu -(P_t^\pi)^*\mu\|_U
+
C_{M^*}\e^{-c_{M^*}t}\|\mu -\pi\|_{U},\quad t\ge0.
\end{align}
Next, by following the strategy to derive \eqref{WT0}, we derive from \eqref{UT}, \eqref{WT5} and \eqref{EY} that
\begin{equation}\label{WY}
\begin{split}
\|( \hat P_{t}^{\mu })^*\mu -(P_t^\pi)^*\mu\|_U&\le \sup_{|\psi|\le U}\bigg|\int_{\R^d}\big((\hat P_t^{\mu }\psi)(x)-(P_t^{\pi}\psi)(x)\big)\mu(\d x)\bigg|\\
&\le  K_1\int_0^t\frac{\varphi_p(s)(1+(c_2s)^{\beta_0/\beta})}{(1\wedge s)^{\theta}}\|P_s^*\mu -\pi\|_U\big((\hat P_{s,t}^{\mu})^* \mu\big)( V_{\beta\wedge\beta^*})\,\d s.
\end{split}
\end{equation}
In addition, by following
the argument for \eqref{e:add}, we can deduce that for some constants $c_3,c_4>0,$
\begin{align*}
\E V_\beta(X_{s,t}^{\mu,x})\le V_\beta(x)-c_3\int_s^t\big(\E V_{\beta^*}(X_{s,u}^{\mu,x})-((P_{s,u}^*\mu)(|\cdot|^{\theta_3}))^{{\theta_4}/({1-\gamma_1})}\big)\,\d u+c_4(t-s),\quad t\ge s\ge0,
\end{align*}
where $X_{s,t}^{\mu,x}:=X_{s,t}^{\mu,\delta_x}$.
This obviously implies that
\begin{align*}
\E V_\beta(X_{s,t}^{\mu, \mu})\le  \mu(V_\beta) +c_4(t-s),\quad t\ge s\ge0,
\end{align*} thanks to the fact that $\theta_3\theta_4\le \beta^*(1-\gamma_1)$ under Assumption $({\bf A}_1)$.
Whence, applying H\"older's inequality,  we   obtain from \eqref{WY2} and \eqref{WY}   that
\begin{equation*}
\begin{split}
\|P_t^*\mu -\pi\|_U
&\le  K_1\varphi(t)\int_0^t\frac{\varphi_p(s)}{(1\wedge s)^{\theta}} \|P_s^*\mu -\pi\|_U\,\d s +
C_{M^*}\e^{-c_{M^*}t}\|\mu -\pi\|_{U},
\end{split}
\end{equation*}
where
\begin{align*}
\varphi(t):=(1+(c_1t)^{\beta_0/\beta})\big(\mu(V_\beta)+c_4t\big)^{({\beta\wedge\beta^*})/{\beta}}.
\end{align*}
Subsequently, the Gronwall inequality (see e.g. \cite[Theorem 11]{Dra}) yields that for all $t\ge0,$
\begin{align*}
\|P_t^*\mu -\pi\|_U\le \|\mu -\pi\|_{U}\bigg[&C_{M^*}\e^{-c_{M^*}t} \\
&+K_1C_{M^*}\varphi(t)\int_0^t \frac{\e^{-c_{M^*}s}\varphi_p(s)}{(1\wedge s)^{\theta}}\exp\bigg(\int_s^t\frac{K_1\varphi(r)\varphi_p(r)}{(1\wedge r)^{\theta}}\,\d r\bigg)\,\d s\bigg].
\end{align*}
Below, we take $t=t_*:=\frac{1}{c_M^*}\log(1\vee(2C_M^*))$ and choose $K^*>0$ such that for any $K_1\in(0,K^*],$
\begin{align*}
\lambda_*:=\frac{1}{2}+K_1C_{M^*}\varphi(t_*)\int_0^{t_*} \frac{\e^{-c_{M^*}s}\varphi_p(s)}{(1\wedge s)^{\theta}}\exp\bigg(\int_s^{t_*}\frac{K_1\varphi(r)\varphi_p(r)}{(1\wedge r)^{\theta}}\d r\bigg)\d s<1.
\end{align*}
Whereafter, we arrive at
\begin{align*}
\|P_{t_*}^*\mu -\pi\|_U\le \lambda_*\|\mu -\pi\|_{U} .
\end{align*}
Therefore, the assertion \eqref{WY3} follows from \eqref{WY4}.
\end{proof}

Next, we turn to the proof of Corollary \ref{cor}.

\begin{proof}[Proof of Corollary $\ref{cor}$]
According to Theorem \ref{thm4} and Remark \ref{re1.8}(1),  it  suffices to show that Assumption (${\bf A}_6$)
is satisfied in order to complete the proof of  Corollary \ref{cor}. To justify  Assumption (${\bf A}_6$), we introduce the   Yoshida approximation
of $b(x,\mu)$ (for fixed $\mu\in\mathscr P_{\beta^*}$), which is defined as below:
\begin{align*}
 b^{(n)}(x,\mu)=n\big((I-b(\cdot,\mu)/n)^{-1}(x)-x\big), \quad x\in\R^d, ~n\ge1,
 \end{align*}
 where $I$ means the identity map, i.e., $I(x)=x$ for $x\in\R^d$.  It is well known  that, for any $x,y\in\R^d,$
 $ |b^{(n)}(x,\mu)|\le |b(x,\mu)|$,
 \begin{align}\label{Y1}
 | b^{(n)}(x,\mu)- b^{(n)}(y,\mu)|\le 2n|x-y|,
\end{align}
and,  by taking  \eqref{WT} into account,
 \begin{align}\label{Y3}
 \<x-y,b^{(n)}(x,\mu)-b^{(n)}(y,\mu)\>\le K_2|x-y|^2;
 \end{align}
 see, for instance,  \cite[Proposition 5.5.3]{DZ} for details.
Correspondingly,   the  SDE
 \begin{align}\label{Y2}
 \d Y_t^{n,\mu}=b^{(n)}(Y_t^{n,\mu},\mu)\,\d t+\d Z_t
 \end{align}
 has a unique strong solution $(Y_t^{n,\mu})_{t\ge0}$. In the sequel, we write $(Y_t^{n,\mu})_{t\ge0}$ as $(Y_t^{n,\mu,x})_{t\ge0}$
if we want to emphasize the initial value $Y_0^{n,\mu}=x\in\R^d$, and set $(P_t^{n,\mu})_{t\ge0} $ to be the Markov semigroup generated by $(Y_t^{n,\mu})_{t\ge0}$.

Recall that $(Z_t)_{t\ge0}$ is a symmetric $\alpha$-stable process, which can be expressed as $(W_{S_t})_{t\ge0}$. Here $(W_t)_{t\ge0}$ is a $d$-dimensional Brownian motion and $(S_t)_{t\ge0}$ is an $\alpha/2$ subordinator independent of $(W_t)_{t\ge0}$.
Since, for fixed $n\ge1,$ $x\mapsto b^{(n)}(x,\mu)$ is Lipschitz (see \eqref{Y1}),
\cite[Theorem 1.1]{Zhangxicheng} implies that for any $f\in C_b (\R^d)$, $t>0$ and $x\in\R^d,$
\begin{align*}
\nn_h(P_t^{n,\mu}f)(x)=\E\bigg(\frac{1}{S_t}f(Y_t^{n,\mu,x})\int_0^t\<\nn_h Y_s^{n,\mu,x}, \d W_{S_s}\> \bigg).
\end{align*}
Hereinabove,   $\nn_h(P_t^{n,\mu}f)(x)$ stands for the directional derivative of  $P_t^{n,\mu}f$ at the point $x$ along the direction $h$,  and the directional derivative $(\nn_h Y_t^{n,\mu,x})_{t\ge0}$ is determined by  the following  ODE
\begin{align*}
\d \nn_h Y_t^{n,\mu,x}=\nn b^{(n)}(Y_t^{n,\mu,x},\mu)\nn_h Y_t^{n,\mu,x}\,\d t,\quad t>0; \quad \nn_h Y_0^{n,\mu,x}=h.
\end{align*}
By the aid of \eqref{Y3},  it follows readily that
\begin{align*}
\<v,\nn_v b^{(n)}(z,\mu)\>\le K_2|v|^2, \quad v,z\in\R^d,
\end{align*}
so that   the chain rule  yields that
\begin{align*}
\d|\nn_h Y_t^{n,\mu,x}|^2\le K_2|\nn_h Y_t^{n,\mu,x}|^2\,\d t.
\end{align*}
This apparently leads to the following estimate:
\begin{align*}
 |\nn_h Y_t^{n,\mu,x}|^2 \le  \e^{ K_2t}|h|^2 .
\end{align*}
Subsequently, for any $p>1$, by tracking the strategy to derive \cite[(1.7)]{Zhangxicheng},
we derive from H\"older's inequality and the  BDG inequality that there exists a constant $C_p>0$ such that for  $f\in C_b(\R^d)$, $t>0$ and $x\in\R^d,$
\begin{align*}
|\nn(P_t^{n,\mu}f)(x)|\le C_p\e^{\frac{1}{2}K_2t}(1\wedge t)^{-\frac{1}{\alpha}} (P_t^{n,\mu}|f|^p)^{\frac{1}{p}}(x).
\end{align*}
 Whereafter,  applying the identity
\begin{align*}
P_t^{n,\mu}f(x)-P_t^{n,\mu}f(y)=\int_0^1\<\nn (P_t^{n,\mu }f)(x+s(y-x)),y-x\>\,\d s,
\end{align*}
and combining the fact that   $\lim_{n\to\8}Y_t^{n,\mu}=Y_t^\mu$ a.s. (see e.g.  \cite[p.520]{WW}) with  the dominated convergence theorem yields that
\begin{align*}
|\nn(P_t^{\mu}f)(x)|\le C_p\e^{\frac{1}{2}K_2t}(1\wedge t)^{-\frac{1}{\alpha}} (P_t^{\mu}|f|^p)^{\frac{1}{p}}(x).
\end{align*}
As a consequence, $({\bf A}_6)$ holds true with $\varphi_p(t)=C_p\e^{\frac{1}{2}K_2t}$ and $\theta=1/\alpha.$
\end{proof}

\begin{proof}[Proof of Example $\ref{exa3}$]
For $x\in\R$ and $\mu\in\mathscr P_1,$ set
\begin{align*}
b(x,\mu)=-\bigg( \lambda x(x-1)(x+2) -\kappa\int_\R((1+|x|^2)^{\frac{1}{2}(\beta-1)}|y|+g(y))\mu(\d y)\bigg).
\end{align*}
A direct calculation shows that
\begin{align*}
xb(x,\mu)\le-\lambda|x|^2 (x^2+x-2) +\kappa\big( (1+ |x|^2)^{\beta/2 }\mu(|\cdot|)+|x|\cdot\|g\|_\8\big).
\end{align*}
Thus, Assumption $({\bf A}_1) (i)$ (so Assumption $({\bf A}_{53})$ in the Appendix section) holds true with $\lambda_1=\lambda/2$, $\theta_1=3,$ $\theta_2=\beta$, $\theta_3=\theta_4=1$ and $\lambda_2=\kappa. $ In particular, $\beta^*=\beta+2.$
Next,
by invoking the fact that $[0,\8)\ni r\mapsto(1+r^2)^{\frac{1}{2}(\beta-1)}$ has a bounded derivative, there exists a constant $C_0>0$ such that for all $x,y\in\R$ and $\mu\in\mathscr P_{\beta+2}^{M^*}$,
 \begin{align*}
(x-y)(b(x,\mu)-b(y,\mu))\le C_0|x-y|^2
 \end{align*}
 so that \eqref{E9-} and Assumption $({\bf A}_{52})$ are  satisfied, respectively.
 In addition, \eqref{EW} is verifiable by using the fact that $g$ is bounded and continuous, and mimicking the related details carried out in the proof of Example \ref{exm}; see in particular
\eqref{ET}. Therefore, we conclude that Assumption $({\bf A}_{32})$ is provable.

Recall that $(Z_t)_{t\ge0}
$ is a symmetric $\alpha$-stable process. Thus,     \cite[Example 1.2]{LW} demonstrates that Assumption $({\bf A}_{51})$  is valid and \eqref{E2} is satisfied
 since the L\'{e}vy measure $\nu(\d z)$ of $(Z_t)_{t\ge0}$ admits the expression  $\nu(\d z)=\frac{c}{|z|^{1+\alpha}}\,\d z$ for some   $c>0.$ As a consequence of Proposition \ref{pro}, \eqref{UT} in Assumption $({\bf A}_{5})$ holds. In particular, $({\bf A}_{31})$ is fulfilled.  In turn, Proposition \ref{pro1} enables us to deduce that Assumption  $({\bf A}_{3 })$ is available.

 In a word,  by applying Corollary \ref{cor}, we infer that \eqref{ETY} has a unique stationary distribution.
\end{proof}

\section{Appendix}

\setcounter{equation}{0}

\renewcommand\theequation{A\arabic{equation}}

\subsection{Uniqueness of stationary distributions via the $L^1$-Wasserstein distance}
In this subsection, we give another collection of sufficiency to check the uniqueness of stationary distributions. More precisely, we assume that

\begin{enumerate}\it
\item[{\rm(${\bf H}_1$)}] there exist constants $\ell_0, K_1,K_2,K_3>0$ such that for all $x,y\in\R^d$ and $\mu_1,\mu_2\in\mathscr P_1,$
\begin{align*}
\<x-y,b(x,\mu_1)-b(y,\mu_2)\>\le\big(K_1 \I_{\{|x-y|\le \ell_0\}}-K_2\I_{\{|x-y|>\ell_0\}}\big)|x-y|^2+K_3\mathbb W_1(\mu_1,\mu_2)^2.
\end{align*}

\item[{\rm(${\bf H}_2$)}]there is a positive, non-decreasing and concave function  $\si\in C([0,2\ell_0])\cap C^2((0,2\ell_0])$ such that
\begin{align*}
\sigma(r)\le \frac{1}{2r}J(\kk\wedge r)(\kk\wedge r)^2,\quad r\in(0,2\ell_0],
\end{align*} where
  $$J(r):=\inf_{x\in\R^d,|x|\le r}\nu_x(\R^d),\quad  r>0$$ with $$  \nu_x(\d z):=(\nu\wedge(\delta_x*\nu))(\d z).$$
\end{enumerate}

Theorem \ref{thm4}  shows that \eqref{E1} has a
 unique  stationary distribution  when $\mu\mapsto b(\cdot,\mu)$ is continuous  under the weighted total variation distance with the interaction intensity small enough. The theorem below reveals that \eqref{E1} can also possess a
 unique  stationary distribution when $\mu\mapsto b(\cdot,\mu)$ is continuous under the $L^1$-Wasserstein distance and the corresponding interaction intensity is small (i.e., the constant $K_3$ in Assumption {\rm(${\bf H}_1$)} is small).

\begin{theorem}\label{thm2}
Assume that  $({\bf H}_1 )$ and $({\bf H}_2)$ hold, and that \eqref{E2} is satisfied with $\beta_*\in[1,2]$.
hold. Then, there exists a constant $K^*>0$ such that, for any $K_3\in(0,K^*],$
\eqref{E1} has a unique stationary distribution $\pi\in\mathscr P_1$, and moreover there exist constants $C_0,\lambda>0$ such that for all $t\ge0$ and $\mu\in\mathscr P_1,$
\begin{align}\label{WY10}
\mathbb W_1(P_t^*\mu,\pi)\le C_0\e^{-\lambda t}\mathbb W_1( \mu,\pi),
\end{align} where $P_t^*\mu:=\mathscr L_{X_t}$  with
 the initial distribution $\mathscr L_{X_0}=\mu.$
\end{theorem}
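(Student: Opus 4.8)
The plan is to realise a stationary distribution of \eqref{E1} as a fixed point of the map $\Lambda_\cdot:\mu\mapsto\pi^\mu$ attached to the frozen equation \eqref{EW5}, working this time on the complete metric space $(\mathscr P_1,\mathbb W_1)$: the locally dissipative structure $({\bf H}_1)$ together with the overlap bound $({\bf H}_2)$ will supply an $L^1$-Wasserstein contraction for the frozen semigroup, which, once the interaction strength $K_3$ is small, turns $\Lambda_\cdot$ into a genuine contraction and hence yields existence and uniqueness via Banach's theorem. The ergodicity \eqref{WY10} will then follow from a Duhamel-type comparison of the nonlinear flow $P_t^*\mu$ with the frozen-at-$\pi$ flow $\mu P_t^\pi$, bootstrapped over a fixed time window.

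First I would analyse the frozen equation \eqref{EW5}. Taking $\mu_1=\mu$ and $\mu_2=\delta_{\bf 0}$ in $({\bf H}_1)$ gives the one-point bound $\<x,b(x,\mu)\>\le-\tfrac12K_2|x|^2+c_0+K_3(\mu(|\cdot|))^2$; since $\beta_*\ge1$ forces $\nu(|\cdot|\I_{\{|\cdot|>1\}})<\8$, the Lyapunov argument of Lemma \ref{lem1} shows that, for each $\mu\in\mathscr P_1$, the process $(Y_t^\mu)_{t\ge0}$ is non-explosive and admits an invariant probability measure $\pi^\mu\in\mathscr P_1$ with $\pi^\mu(|\cdot|)$ controlled by $\mu(|\cdot|)$. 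Specialising $({\bf H}_1)$ to $\mu_1=\mu_2=\mu$ removes the $\mathbb W_1$-term and leaves precisely a locally dissipative one-sided Lipschitz bound for $b(\cdot,\mu)$; this, combined with $({\bf H}_2)$, is exactly the input of the coupling constructions for L\'evy-driven SDEs in \cite{LW,LWa} (see also the argument behind Proposition \ref{pro}), which produce a concave increasing $f$ with $c_fr\le f(r)\le C_fr$ and constants $C_1,\lambda_1>0$, independent of $\mu\in\mathscr P_1$, such that
\[
\mathbb W_1(\nu_1P_t^\mu,\nu_2P_t^\mu)\le C_1\e^{-\lambda_1 t}\mathbb W_1(\nu_1,\nu_2),\qquad \nu_1,\nu_2\in\mathscr P_1,\ t\ge0.
\]
In particular $\pi^\mu$ is the unique IPM of $(Y_t^\mu)_{t\ge0}$, so $\Lambda_\cdot:\mathscr P_1\to\mathscr P_1$ is well defined, and, as in the proof of Theorem \ref{thm1}, a measure $\mu\in\mathscr P_1$ is a stationary distribution of \eqref{E1} if and only if $\Lambda_\mu=\mu$.

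Next I would prove that $\Lambda_\cdot$ is contractive. Using the invariance $\pi^{\mu_i}=\pi^{\mu_i}P_t^{\mu_i}$ and inserting $\pi^{\mu_2}P_t^{\mu_1}$, the triangle inequality and the displayed contraction give
\[
\mathbb W_1(\Lambda_{\mu_1},\Lambda_{\mu_2})\le C_1\e^{-\lambda_1 t}\mathbb W_1(\Lambda_{\mu_1},\Lambda_{\mu_2})+\mathbb W_1(\pi^{\mu_2}P_t^{\mu_1},\pi^{\mu_2}P_t^{\mu_2}).
\]
For the last term I would use the synchronous coupling of $(Y_t^{\mu_1,x})_{t\ge0}$ and $(Y_t^{\mu_2,x})_{t\ge0}$, whose difference solves an ODE because the jumps cancel; $({\bf H}_1)$ then yields $\tfrac{\d}{\d t}|Y_t^{\mu_1,x}-Y_t^{\mu_2,x}|^2\le 2K_1|Y_t^{\mu_1,x}-Y_t^{\mu_2,x}|^2+2K_3\mathbb W_1(\mu_1,\mu_2)^2$, so by Gronwall and Jensen $\mathbb W_1(\pi^{\mu_2}P_t^{\mu_1},\pi^{\mu_2}P_t^{\mu_2})\le(K_3/K_1)^{1/2}\e^{K_1 t}\mathbb W_1(\mu_1,\mu_2)$. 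Fixing $t_0$ with $C_1\e^{-\lambda_1 t_0}\le\tfrac14$ gives $\mathbb W_1(\Lambda_{\mu_1},\Lambda_{\mu_2})\le 2(K_3/K_1)^{1/2}\e^{K_1 t_0}\mathbb W_1(\mu_1,\mu_2)$, so for $K^*>0$ small and $K_3\in(0,K^*]$ the map $\Lambda_\cdot$ is a contraction of the complete space $(\mathscr P_1,\mathbb W_1)$; Banach's theorem provides the unique fixed point $\pi\in\mathscr P_1$, i.e.\ the unique stationary distribution of \eqref{E1}.

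Finally, for \eqref{WY10}, fix $\mu\in\mathscr P_1$, let $(X_t)_{t\ge0}$ solve \eqref{E1} with $\mathscr L_{X_0}=\mu$ (so $\mathscr L_{X_t}=P_t^*\mu\in\mathscr P_1$ by the same Lyapunov bound), and let $(\bar Y_t)_{t\ge0}$ solve $\d\bar Y_t=b(\bar Y_t,\pi)\,\d t+\d Z_t$ with $\bar Y_0=X_0$ and the same noise, so $\mathscr L_{\bar Y_t}=\mu P_t^\pi$. Writing $D_t:=\mathbb W_1(P_t^*\mu,\pi)$, the displayed contraction gives $\mathbb W_1(\mu P_t^\pi,\pi)\le C_1\e^{-\lambda_1 t}\mathbb W_1(\mu,\pi)$ (using $\pi=\pi P_t^\pi$), while $({\bf H}_1)$ applied to $X_t-\bar Y_t$ gives $\tfrac{\d}{\d t}\E|X_t-\bar Y_t|^2\le 2K_1\E|X_t-\bar Y_t|^2+2K_3D_t^2$, hence by Gronwall and the triangle inequality
\[
D_t\le(2K_3)^{1/2}\e^{K_1 t}\Big(\int_0^t D_s^2\,\d s\Big)^{1/2}+C_1\e^{-\lambda_1 t}\mathbb W_1(\mu,\pi).
\]
Running this over $[0,t_0]$ and absorbing $\sup_{s\le t_0}D_s$ into itself, which is possible after shrinking $K^*$ so that $(2K^*t_0)^{1/2}\e^{K_1 t_0}\le\tfrac12$, gives $\sup_{s\le t_0}D_s\le 2C_1\mathbb W_1(\mu,\pi)$, and then, shrinking $K^*$ once more, the one-step bound $\mathbb W_1(P_{t_0}^*\mu,\pi)\le\tfrac12\mathbb W_1(\mu,\pi)$; iterating via the nonlinear semigroup identity $P_{(n+1)t_0}^*=P_{t_0}^*\circ P_{nt_0}^*$ from \eqref{WY4} and filling the gaps $t\in(nt_0,(n+1)t_0)$ with the same supremum estimate yields \eqref{WY10} with $\lambda=t_0^{-1}\log2$ and $C_0=4C_1$. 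The main obstacle is exactly this bootstrap: the synchronous coupling unavoidably produces an $\e^{K_1 t}$ factor coming from the local one-sided Lipschitz constant $K_1$ in $({\bf H}_1)$, which the decay of the frozen semigroup cannot absorb over a single long interval, so one must fix a moderate window $t_0$, use dissipativity only qualitatively there, and load all the smallness onto $K^*$; a secondary technical point is securing the coupling constants $C_1,\lambda_1$ and the profile $f$ uniformly over the frozen measures $\mu$, which is where the overlap bound $({\bf H}_2)$ enters.
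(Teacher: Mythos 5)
Your proposal follows the same architecture as the paper's proof of Theorem \ref{thm2}: invoke \cite[Theorem 4.2]{LW} (via $({\bf H}_1)$ with $\mu_1=\mu_2$ and $({\bf H}_2)$) for a $\mu$-uniform $\mathbb W_1$-contraction of the frozen semigroup $P_t^\mu$, establish Lipschitz dependence of $\pi^{\mu_2}P_t^{\mu_1}$ on the frozen measure via synchronous coupling and Gronwall to make $\Lambda_\cdot$ a Banach contraction on $(\mathscr P_1,\mathbb W_1)$ for $K_3$ small, and then obtain \eqref{WY10} by the Duhamel split $\mathbb W_1(P_t^*\mu,\pi)\le\mathbb W_1(P_t^*\mu,\mu P_t^\pi)+\mathbb W_1(\mu P_t^\pi,\pi)$, a Gronwall bootstrap on a fixed window $[0,t_0]$, and iteration through the nonlinear semigroup identity \eqref{WY4}. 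The only real deviation is a simplification: where the paper proves \eqref{S3} by an interlacing decomposition over the large-jump times of $Z^*_t$ and iterates Gronwall interval by interval, you note that the synchronous coupling cancels all jumps so the difference is absolutely continuous on the whole of $[0,\infty)$, and a single Gronwall step gives the cleaner prefactor $(K_3/K_1)^{1/2}\e^{K_1 t}$ in place of the paper's more elaborate $C_t$ (involving $\nu(B_1^c)$); both are locally bounded in $t$, which is all the fixed-point argument requires.
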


\begin{remark} We make some remarks  on  Assumptions (${\bf H}_1$)  and (${\bf H}_2$), and compare Theorem \ref{thm2} with the existing literature \cite{LMW}.
\begin{itemize}
\item[{\rm(1)}] Notice that  (${\bf H}_1$) and (${\bf H}_2$)  are isolated since there are no mutual implications between them.
The role of (${\bf H}_1 $) is two-fold. Under (${\bf H}_1 $) with $\mu_1=\mu_2$  and (${\bf H}_2$), the reference SDE \eqref{EW5} is exponentially ergodic under the $L^1$-Wasserstein distance; see, for instance,   \cite[Theorem 4.2]{LW}. This is indispensable to demonstrate the uniqueness of stationary distributions. On the other hand,  (${\bf H}_1 $)  is exploited  to show the $L^1$-Wasserstein
continuity of $(Y_t^{\cdot})_{t\ge0}$  with respect to the frozen distributions; see, in particular,  \eqref{S3} below for more details.

\item[{\rm(2)}]
In \cite[Theorem 1.4]{LMW}, \eqref{E1} is shown to have a unique stationary distribution under $({\bf H}_2)$, (${\bf H}_1'$) as well as
\eqref{E2} with $\beta_*\in[1,2)$, where
\begin{enumerate}
\item[(${\bf H}_1'$)] {\it
there exist constants $\ell_0, K_1,K_2,K_3>0$ such that  for all $x,y\in\R^d$ and $\mu_1,\mu_2\in\mathscr P_1,$
\begin{align*}
\<x-y,b(x,\mu_1)-b(y,\mu_2)\>\le&\big(K_1 \I_{\{|x-y|\le \ell_0\}}-K_2\I_{\{|x-y|>\ell_0\}}\big)|x-y|^2\\
&+K_3|x-y|\mathbb W_1(\mu_1,\mu_2).
\end{align*}}
\end{enumerate}
It is obvious  that  (${\bf H}_1'$) is stronger than (${\bf H}_1$).
In the proof of \cite[Theorem 1.4]{LMW}, the $L^1$-Wasserstein contraction of $(X_t)_{t\ge0}$ (rather than $(Y_t^\mu)_{t\ge0}$) plays an important role.
However, as long as  (${\bf H}_1'$) is substituted by (${\bf H}_1$),  the strategy therein leveraging on  the reflection coupling used in the proof of \cite[Theorem 1.4]{LMW} no longer  works  due to  the appearance of the term $\mathbb W_1(\mu_1,\mu_2)^2$ (instead of $|x-y|\mathbb W_1(\mu_1,\mu_2)$), which can bring  about the singularity on the diagonal by applying the coupling method. Based on such a   viewpoint,  Theorem \ref{thm2} indicates that
\eqref{E1} might enjoy  a unique stationary distribution under the weaker Assumption (${\bf H}_1$) with a weak interaction.\end{itemize}
\end{remark}

The proof of Theorem \ref{thm2}  relies  on the ergodicity of $(Y_t^\mu)_{t\ge0}$
and the  $L^1$-Wasserstein Lipschitz continuity in a finite-time interval with respect to the frozen distributions.

\begin{proof}[Proof of Theorem $\ref{thm2}$]
Under (${\bf H}_1$) with $\mu_1=\mu_2$ and (${\bf H}_2$),  for each fixed $\mu\in\mathscr P_1$,  we deduce from \cite[Theorem 4.2]{LW}  that
\begin{align}\label{S1}
\mathbb W_1(\mu_1P_t^\mu,\mu_2P_t^\mu)\le C\e^{-\lambda t}\mathbb W_1(\mu_1,\mu_2),\quad \mu_1,\mu_2\in\mathscr P_1,
\end{align}
where
\begin{align*}
C:&=\frac{1}{2}(1+1/{c_1})=\frac{1}{2}\big(1+\exp\big(g(2\ell_0)((2K_2)\wedge g_1(2\ell_0)^{-1})\big)\big),\\
\lambda:&=\frac{c_2}{1+\e^{2g(2\ell_0)}}=\frac{(2K_2)\wedge g_1(2\ell_0)^{-1}}{1+\exp(g(2\ell_0)((2K_2)\wedge g_1(2\ell_0)^{-1}))}.
\end{align*}
Herein, for $g_1(r):=\int_0^r\frac{1}{\sigma(s)}\,\d s$ and $g_2(r):=K_1\int_0^r\frac{1}{\sigma(s)}\,\d s$,
\begin{align*}
c_2:=(2K_2)\wedge g_1(2\ell_0)^{-1},\quad c_1:=\e^{-c_2g(2\ell_0)}\quad \mbox{ and } \quad g(r):=g_1(r)+\frac{2}{c_2}g_2(r),\quad r\in(0,2\ell_0].
\end{align*}
Thus, via  Banach's fixed point theorem and the fact that \eqref{E2} is satisfied with $\beta_*\in[1,2]$, \eqref{S1} shows that, for each fixed $\mu\in\mathscr P_1 $,  $(P_t^\mu)_{t\ge0}$  has a unique IPM  $\pi^\mu\in\mathscr P_1.$
Consequently, we can define the following mapping
\begin{align*}
\Lambda_\cdot: \mathscr P_1(\R^d)\to \mathscr P_1(\R^d),\quad \mu\mapsto \Lambda_\mu:=\pi^\mu.
\end{align*}
As analyzed in the proof of Theorem \ref{thm4}, it is sufficient to prove that there exists a constant $\kk\in(0,1)$ such that
\begin{align}\label{S2}
\mathbb W_1(\Lambda_{\mu_1},\Lambda_{\mu_2})\le \alpha\mathbb W_1(\mu_1,\mu_2)
\end{align}
in order to show the uniqueness of stationary distributions.

Below, we stipulate $\mu_1,\mu_2\in\mathscr P_1$. Due to the invariance of $\Lambda_{\mu_1},\Lambda_{\mu_2}$ (so $\Lambda_{\mu_1}=\Lambda_{\mu_1}P_t^{\mu_1}$ and $\Lambda_{\mu_2}=\Lambda_{\mu_2}P_t^{\mu_2}$ for all $t\ge0$), the triangle inequality and \eqref{S1}
imply that for all $t\ge0,$
\begin{align*}
\mathbb W_1(\Lambda_{\mu_1},\Lambda_{\mu_2})&\le \mathbb W_1(\Lambda_{\mu_1}P_t^{\mu_1},\Lambda_{\mu_2}P_t^{\mu_1})+\mathbb W_1(\Lambda_{\mu_2}P_t^{\mu_1},\Lambda_{\mu_2}P_t^{\mu_2})\\
&\le C\e^{-\lambda t}\mathbb W_1(\Lambda_{\mu_1},\Lambda_{\mu_2})+\mathbb W_1(\Lambda_{\mu_2}P_t^{\mu_1},\Lambda_{\mu_2}P_t^{\mu_2}).
\end{align*}
This subsequently gives that  for all $t\ge0,$
\begin{align}\label{S5}
(1-C\e^{-\lambda t})\mathbb W_1(\Lambda_{\mu_1},\Lambda_{\mu_2})\le \mathbb W_1(\Lambda_{\mu_2}P_t^{\mu_1},\Lambda_{\mu_2}P_t^{\mu_2}).
\end{align}
As soon as  we can claim the following statement:
\begin{align}\label{S3}
\mathbb W_1(\Lambda_{\mu_2}P_t^{\mu_1},\Lambda_{\mu_2}P_t^{\mu_2})\le K_3^{\frac{1}{2}} C_t\mathbb W_1(\mu_1,\mu_2),
\end{align}
where
\begin{align*}
C_t:=2^{\frac{1}{2}} \nu( {B_1^c})t^{\frac{1}{2}}\e^{(K_1-\nu( {B_1^c}))t}\big(1+\nu( {B_1^c})t \e^{K_1t}\big)\e^{\nu( {B_1^c})t\e^{K_1t}} ,\quad t\ge0,
\end{align*}
we obtain from \eqref{S5} that
\begin{align*}
 C_t^{-1} (1-C\e^{-\lambda t})\mathbb W_1(\Lambda_{\mu_1},\Lambda_{\mu_2})\le K_3^{\frac{1}{2}}\mathbb W_1(\mu_1, \mu_2).
\end{align*}
Whence, the assertion \eqref{S2} follows as long as $K_3<(K^*)^2, $ where $$
K^*:=\sup_{t>\ln C/\lambda}(C_t^{-1} (1-C\e^{-\lambda t})).
$$

In the sequel,
we adopt the so-called interlacing technique to establish \eqref{S3} under Assumption (${\bf H}_1$). Let $N(\d s, \d z)$ be a Poisson random
measure with the intensity measure $\d t\times \nu(\d z)$, where $\nu$ is the L\'evy measure corresponding to the L\'evy process $(Z_t)_{t\ge0}$. In particular,
$$\d Z_t=\int_{\{|z|\le 1\}}z\, \hat N(\d t, \d z)+ \int_{\{|z|> 1\}}z\,   N(\d t, \d z),$$ where $\hat N(\d t,\d z)=N(\d t, \d z)-\d t\,\nu(\d z).$
 To this end, we
introduce the process $(Z^*_t)_{t\ge0}$ defined by $Z^*_t=\int_0^t\int_{B_1^c}zN(\d s,\d z)$, where $B_1^c:=\{x\in\R^d: |x|>1\}$. To characterize the jumping time of $(Z^*_t)_{t\ge0}$, we  set
$D_p:=\{t\ge0: Z_t\neq Z_{t-},\Delta Z_t \in B_1^c\},$
where
$\Delta Z_t:=Z_t-Z_{t-}$ means the increment of $(Z_t)_{t\ge0}$ at time $t$.
Note that the set $D_p$ is  countable   so   it can be rewritten countably as
$
D_p=\{\si_1,\cdots,    \si_n,\cdots\},
$
in which the sequence of stopping times $(\si_n)_{n\ge1}$ satisfies that
 $n\mapsto \si_n$ is increasing, and
  $\lim_{n\to\infty}\si_n=\infty$ a.s. by invoking  $\nu(\I_{B_1^c})<\8$.

It is ready to see that for any $t\ge0,$
\begin{align}\label{S4}
\mathbb W_1(\Lambda_{\mu_2}P_t^{\mu_1},\Lambda_{\mu_2}P_t^{\mu_2})=\sum_{n=0}^\8\E\big(|\Upsilon_t^{\mu_1,\mu_2}|\I_{\{\si_n\le t<\si_{n+1}\}}\big).
\end{align}
where $\Upsilon_t^{\mu_1,\mu_2}:=Y_t^{\mu_1,\mu_2}-Y_t^{\mu_2,\mu_2}$. Here, $(Y_t^{\mu,\mu})_{t\ge0}$ denotes the solution to \eqref{EW5} with the initial distribution $\mathscr{L}_{Y_0^\mu}=\mu$.
By the chain rule, it follows from (${\bf H}_1$) that for any $t\in[\si_n,\si_{n+1}),$
\begin{align*}
\d |\Upsilon_t^{\mu_1,\mu_2}|^2&=2\<\Upsilon_t^{\mu_1,\mu_2}, b(Y_t^{\mu_1,\mu_2},\mu_1)-b(Y_t^{\mu_2,\mu_2},\mu_2)\>\,\d t\\
&\le 2K_1|\Upsilon_t^{\mu_1,\mu_2}|^2\,\d t+2K_3\mathbb W_1(\mu_1,\mu_2)^2\,\d t.
\end{align*}
Thus, Gronwall's inequality yields that  for any $t\in[\si_n,\si_{n+1}),$
\begin{align*}
|\Upsilon_t^{\mu_1,\mu_2}| &\le \big(|\Upsilon_{\si_n}^{\mu_1,\mu_2}| +(2K_3)^{\frac{1}{2}} \mathbb W_1(\mu_1,\mu_2) (t-\si_n)^{\frac{1}{2}}\big)\e^{ K_1(t-\sigma_n)}.
\end{align*}
Subsequently, by the aid of  $\Upsilon_{\si_i}^{\mu_1,\mu_2}=\Upsilon_{\si_i-}^{\mu_1,\mu_2}$, we deduce inductively that for any $t\in[\si_n,\si_{n+1}),$
\begin{align*}
|\Upsilon_t^{\mu_1,\mu_2}| &\le \big(|\Upsilon_{\si_n-}^{\mu_1,\mu_2}| +(2K_3)^{\frac{1}{2}} \mathbb W_1(\mu_1,\mu_2) (t-\si_n)^{\frac{1}{2}}\big)\e^{ K_1(t-\sigma_n)}\\
&\le \e^{K_1t}\big(|\Upsilon_{\si_n-}^{\mu_1,\mu_2}| +(2K_3)^{\frac{1}{2}} \mathbb W_1(\mu_1,\mu_2) t^{\frac{1}{2}}\big) \\
&\le \e^{K_1t}  |\Upsilon_{\si_{n-1}}^{\mu_1,\mu_2}| +(2K_3)^{\frac{1}{2}} \mathbb W_1(\mu_1,\mu_2) t^{\frac{1}{2}} \e^{ 2K_1t}  +(2K_3)^{\frac{1}{2}} \mathbb W_1(\mu_1,\mu_2) t^{\frac{1}{2}}\e^{K_1t}\\
&\le \cdots\\
&\le (2K_3)^{\frac{1}{2}} \mathbb W_1(\mu_1,\mu_2) t^{\frac{1}{2}}\sum_{i=1}^{n+1}\e^{K_1ti}.
\end{align*}
Plugging the previous estimate back into \eqref{S4} enables us to derive that
\begin{equation}\label{WY8}
\begin{split}
\mathbb W_1(\Lambda_{\mu_2}P_t^{\mu_1},\Lambda_{\mu_2}P_t^{\mu_2})&\le (2K_3)^{\frac{1}{2}} \mathbb W_1(\mu_1,\mu_2) t^{\frac{1}{2}}\sum_{n=0}^\8(n+1) \e^{K_1t(1+n)}\P(N_t=n)\\
&\le  \nu({B_1^c})(2K_3)^{\frac{1}{2}} \mathbb W_1(\mu_1,\mu_2) t^{\frac{1}{2}}\e^{(K_1-\lambda^*)t} \sum_{n=0}^\8(n+1) (\e^{K_1t    }\lambda^*t)^n/{n!}\\
&= K_3^{\frac{1}{2}} C_t \mathbb W_1(\mu_1,\mu_2) ,
\end{split}
\end{equation}
where $N_t:=\sharp\{s\in[0,t]: Z^*_s\neq Z^*_{s-}\}$ means the counting process associated with $(Z^*_t)_{t\ge0}$,
and in the second inequality we employ the fact that $\P(N_t=n)=\e^{-\lambda^*t}(\lambda^*t)^n/{n!}$ since $N_t$ is  a Poisson process with the intensity $\lambda^*t$ for $\lambda^*:=\nu( {B_1^c})$. Accordingly,   \eqref{S3} is verifiable.

Let $\pi$ be the unique stationary probability measure of the process $(X_t)_{t\ge0}$.
By virtue of the invariance of $\pi$, it follows from the triangle inequality and \eqref{S1} that
\begin{equation}\label{WY9}
\begin{split}
\mathbb W_1(P_t^*\mu,\pi)&\le   \mathbb W_1((\hat P_t^\mu)^*\mu,(P_t^\pi)^*\mu)+\mathbb W_1( (P_t^\pi)^*\mu,   (P_t^\pi)^*\pi)\\
&\le \mathbb W_1((\hat P_t^\mu)^*\mu,(P_t^\pi)^*\mu)+C\e^{-\lambda t}\mathbb W_1(\mu_1,\mu_2),
\end{split}
\end{equation}
where $(\hat P_t^\mu)^*\mu$ means the law of $X_t^{\mu,\mu}$ determined by (the time-homogeneous version of) \eqref{WY7}.  Next, by repeating the procedure to derive \eqref{WY8},  there exists a locally bounded function $\phi:[0,\8)\to[0,\8)$ such that
\begin{align*}
\mathbb W_1((\hat P_t^\mu)^*\mu,(P_t^\pi)^*\mu)\le K_3^{\frac{1}{2}}\phi(t)\bigg(\int_0^t\mathbb W_1(P_s^*\mu,\pi)^2\d s\bigg)^{\frac{1}{2}}.
\end{align*}
Thus, inserting this back into \eqref{WY9} gives that
 \begin{equation*}
\mathbb W_1(P_t^*\mu,\pi)^2
\le 2 K_3 \phi(t)^2 \int_0^t\mathbb W_1(P_s^*\mu,\pi)^2\d s  +2C^2\e^{-2\lambda t}\mathbb W_1(\mu ,\pi)^2.
\end{equation*}
This, along with Gronwall's inequality, leads to
\begin{align*}
\mathbb W_1(P_t^*\mu,\pi)^2 \le \mathbb W_1(\mu ,\pi)^2\bigg(2C^2\e^{-2\lambda t}+4C^2K_3 \phi(t)^2\int_0^t\e^{-2\lambda s}\e^{2 K_3\int_s^t \phi(u)^2\,\d u }\,\d s\bigg).
\end{align*}
Subsequently, we take $t=t_*:=\frac{1}{2\lambda}\log(1+4C^2)$ and choose $K^*>0$ such that for any $K_3\in(0,K^*],$
\begin{align*}
\lambda^*:=\frac{1}{2}+4C^2K_3 \phi({t_*})^2\int_0^{t_*}\e^{-2\lambda s}\e^{2 K_3\int_s^{t_*} \phi(u)^2\d u }\d s<1.
\end{align*}
As a result, we have
\begin{align*}
\mathbb W_1(P_{t_*}^*\mu,\pi)  \le (\lambda^*)^{\frac{1}{2}}\mathbb W_1(\mu ,\pi).
\end{align*}
Finally, the desired assertion \eqref{WY10} follows from the semigroup property of $(P_t^*)_{t\ge0}$.
\end{proof}

\subsection{Verification of \eqref{UT} in Assumption (${\bf A}_{5}$)}
  For $\hat\mu\in\mathscr P(\R^d)$,
we write  $(\hat \mu P_t^\mu)_{t\ge0}$ as  the distribution of $(Y_t^\mu)_{t\ge0}$ with the initial distribution $\mathscr L_{Y_0^\mu}=\hat\mu$.
In order to  establish  the ergodicity of $(Y_t^\mu)_{t\ge0}$ under the weighted total variation distance,
we assume that
\begin{enumerate}\it
\item[$($${\bf A}_{51}$$)$] there is a constant $\kk \in(0,1]$ such that $J(\kk )>0$, where
 $J(r):=\inf_{x\in\R^d,|x|\le r}\nu_x(\R^d)$ for all $r>0$ with $\nu_x(\d z)=(\nu\wedge(\delta_x*\nu))(\d z)$.

\item[$($${\bf A}_{52}$$)$] for fixed $\mu\in\mathscr P_{\beta^*}^{M^*}$, there is a constant $K:=K_{M^*}>0$ such that for
any $x,y\in\R^d,$
\begin{align*}
\<x-y,b(x,\mu)-b(y,\mu)\>\le K  (|x-y|^2+|x-y|).
\end{align*}

\item[$($${\bf A}_{53}$$)$] $($${\bf A}_1$$)$ with $\theta_1\ge1$ is satisfied.
\end{enumerate}

The following proposition indicates  that $(\hat \mu P_t^\mu)_{t\ge0}$ is weakly contractive under the weighted total variation distance.

\begin{proposition}\label{pro}
Under $({\bf A}_{51})$, $({\bf A}_{52})$ and $({\bf A}_{53})$, for each fixed $\mu\in\mathscr P_{\beta^*}^{M^*}$, there are constants $C_{M^*},\lambda_{M^*}>0$
such that for any $\mu_1,\mu_2\in\mathscr P(\R^d)$ satisfying $\mu_1(V)+\mu_2(V)<\8,$
\begin{align}\label{ER}
\|\mu_1P_t^\mu-\mu_2P_t^\mu\|_{V}\le C_{M^*}\e^{-\lambda_{M^*} t}\|\mu_1-\mu_2\|_{V},
\end{align} where $V(x)=V_\beta(x):=(1+|x|^2)^{\beta/2}$. In particular,
$(Y_t^\mu)_{t\ge0}$ has a unique IPM $\pi^\mu$, and there exists a constant $C_{M^*}^*>0$ so that for all $x\in \R^d$ and $t\ge0$,
\begin{align}\label{EE}
 \|\pi^\mu - \delta_x P_t^\mu  \|_{\rm{var}}\le C_{M^*}^* V(x) \e^{-\lambda_{M^*} t}.
\end{align}
\end{proposition}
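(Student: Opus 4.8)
The plan is to combine a uniform Foster--Lyapunov (geometric drift) estimate for the reference process $(Y_t^\mu)_{t\ge0}$ with a uniform minorization on the sublevel sets of $V=V_\beta$, and then to invoke a Harris-type theorem (see, e.g., \cite{HMS,MT}) to obtain the $V$-weighted total variation contraction \eqref{ER} with constants depending only on $M^*$; the existence and uniqueness of $\pi^\mu$ together with the pointwise bound \eqref{EE} will follow by a standard fixed point argument and a domination by $\|\cdot\|_V$.

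First I would record the drift condition. Since Assumption $({\bf A}_{53})$ is exactly $({\bf A}_1)$ with $\theta_1\ge1$, the estimate \eqref{e:add} applies and yields, for every $\mu\in\mathscr P_{\beta^*}^{M^*}$, a bound of the form $(\mathscr L_\mu V_\beta)(x)\le -c_0 V_{\beta^*}(x)+C_\mu$, where $c_0>0$ is the constant $2^{-(5+\theta_1)/2}\beta\lambda_1$, which does not depend on $\mu$, and $C_\mu$ is the additive term in \eqref{e:add}. The constraint $\theta_1\ge1$ forces $\beta^*=\beta+\theta_1-1\ge\beta$, hence $V_{\beta^*}\ge V_\beta$ pointwise, so that $\mathscr L_\mu V_\beta\le -c_0 V_\beta+C_\mu$. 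Moreover $C_\mu$ is bounded uniformly over $\mathscr P_{\beta^*}^{M^*}$: the $\nu$-dependent part of $C_\mu$ is fixed, while the measure-dependent factor $\mu(|\cdot|^{\theta_3})^{\theta_4/(1-\gamma_1)}$ is at most a fixed power of $M^*$ by Jensen's inequality, using $\theta_3\le\beta^*$ and $\mu(|\cdot|^{\beta^*})\le M^*$. Dynkin's formula then gives $P_t^\mu V_\beta\le \e^{-c_0 t}V_\beta+C_{M^*}$ for all $t\ge0$, with $C_{M^*}$ uniform in $\mu\in\mathscr P_{\beta^*}^{M^*}$.

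Next I would establish the minorization on the sets $\{V_\beta\le R\}$. The idea is to use the refined basic coupling of the driving noise $(Z_t)_{t\ge0}$ together with a synchronous coupling of the drifts, following \cite{LW}: Assumption $({\bf A}_{51})$, i.e. the overlap condition $J(\kappa)>0$, supplies a positive rate at which a common jump identifies the two marginals once the coupled processes lie within distance $\kappa$, while Assumption $({\bf A}_{52})$, the one-sided Lipschitz bound with constant $K_{M^*}$, controls the speed at which the drift can push the two copies apart before such a jump occurs. Consequently there are $t_0>0$ and, for each $R>0$, a constant $\varepsilon_R>0$ --- all depending only on $M^*$ (through $K_{M^*}$) and on the fixed L\'evy measure $\nu$ --- such that $\|\delta_xP_{t_0}^\mu-\delta_yP_{t_0}^\mu\|_{\rm{var}}\le 1-\varepsilon_R$ whenever $V_\beta(x)\vee V_\beta(y)\le R$. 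Feeding the geometric drift bound of the previous step and this minorization (with $R$ chosen large enough relative to $c_0$ and $C_{M^*}$) into the Harris theorem produces \eqref{ER} with $C_{M^*},\lambda_{M^*}>0$ depending only on $M^*$.

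Finally, \eqref{ER} makes $(P_t^\mu)_{t\ge0}$ a contraction, for $t$ large, on the complete metric space of finite measures $\nu$ with $\nu(V_\beta)<\infty$ equipped with $\|\cdot\|_{V_\beta}$; Banach's fixed point theorem then gives a unique IPM $\pi^\mu$, which satisfies $\pi^\mu(V_\beta)<\infty$ by the drift bound. Taking $\mu_1=\delta_x$ and $\mu_2=\pi^\mu$ in \eqref{ER}, using $\|\cdot\|_{\rm{var}}\le\|\cdot\|_{V_\beta}$ (valid since $V_\beta\ge1$) and $\|\delta_x-\pi^\mu\|_{V_\beta}\le V_\beta(x)+\pi^\mu(V_\beta)\le C_{M^*}^*V_\beta(x)$, yields \eqref{EE}. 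The step I expect to be the main obstacle is the minorization: one must construct the coupling so that the local expansion permitted by the merely one-sided Lipschitz bound $({\bf A}_{52})$ is reconciled with the requirement that an overlap jump merges the two copies within the fixed horizon $t_0$, and one must check that every constant so produced is controlled purely by $M^*$ and $\nu$ --- this is precisely where the quantitative coupling estimates of \cite{LW} enter.
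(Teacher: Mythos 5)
Your proposal follows a different route from the paper: you aim for a classical Harris-type theorem, i.e.\ a geometric drift condition for $V_\beta$ together with a uniform minorization on sublevel sets $\{V_\beta\le R\}$, and then invoke the drift-plus-minorization machinery to obtain the $V$-weighted total variation contraction. The paper instead works with the coupling operator $\tilde{\mathscr L}_\mu$ of the refined basic coupling from \cite{LMW} and constructs an explicit quasi-metric $\Phi_n(x,y)=\psi(|x-y|)+a(1-h_n(|x-y|))+\vv(1-h_n(|x-y|))(V(x)+V(y))$, built from a concave function $\psi$ that is exponentially curved near the diagonal and asymptotically linear at infinity, a cutoff $h_n$, and the Lyapunov function $V$; it then proves a pointwise contraction $(\tilde{\mathscr L}_\mu\Phi_n)(x,y)\le-\lambda_0\Phi_n(x,y)$ for $1/n\le|x-y|\le n$ and concludes by \cite[Proposition 2.1]{LMW}. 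Your drift step is correct and essentially identical to the paper's use of \eqref{e:add}: $\theta_1\ge1$ makes $V_{\beta^*}\ge V_\beta$, and the measure-dependent additive constant is uniformly bounded on $\mathscr P_{\beta^*}^{M^*}$ via Jensen and $\theta_3\theta_4\le\beta^*(1-\gamma_1)$. The derivation of \eqref{EE} from \eqref{ER} is also the same as the paper's.

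The genuine gap is the minorization step, and you yourself flag it as the obstacle; but I think your sketch underestimates how much work it hides. Assumption $({\bf A}_{52})$ is a one-sided Lipschitz bound with a \emph{positive} constant $K_{M^*}$, so the synchronous part of the dynamics is allowed to be locally expansive: two copies started at distance $|x-y|$ may drift apart like $\e^{K_{M^*}t}$. The overlap $J(\kappa)>0$ from $({\bf A}_{51})$ only produces a positive coalescence rate once the coupled pair is already within distance $\kappa$, which may be much smaller than the diameter of the sublevel set $\{V_\beta\le R\}$; the mechanism that brings the pair from distance $O(R^{1/\beta})$ down to distance $\kappa$ against an expanding drift is precisely the reflection part of the refined basic coupling, and controlling it quantitatively is what the paper's drift estimate for $\tilde{\mathscr L}_\mu\Phi_n$ does --- note that the constant $c$ in $\psi(r)=1-\e^{-cr}$ is chosen specifically to make $\psi''(r)\kappa^2J(\kappa)$ dominate $K\psi'(r)$ on $(\kappa,l_0]$. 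There is no direct Doeblin/minorization estimate in the paper, and extracting one from the coupling would in effect require redoing the same $\Phi_n$-drift computation. So a faithful implementation of your Harris route would not be shorter, and the minorization is not a quotable corollary of \cite{LW}; it must be proved with the same granularity as the paper's coupling-drift inequality, tracking the interplay between $c$, $\kappa$, $J(\kappa)$, and $K_{M^*}$. Absent that argument, your second paragraph is an assertion rather than a proof.
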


\begin{proof}
The proof is in the spirit of that of \cite[Theorem 2.2]{LMW} but with some  modifications. Here we only present the sketch of the proof to highlight the differences. In the sequel, we fix $\mu\in\mathscr P_\beta^{M^*}$. Note that Assumption $($${\bf A}_{1}$$)$ holds with $\theta_1\ge1$.
According to  \eqref{e:add} and  H\"{o}lder's inequality, besides $\theta_1\ge1$ and $\theta_3\theta_4\le \beta^*(1-\gamma_1)$,
it follows  that
\begin{align*}(\mathscr L_\mu V_\beta)(x)\le& -  2^{-\frac{5+\theta_1}{2}} \beta\lambda_1 V_{\beta}(x)   +\Phi(\vv_2,1,l) +  \beta\lambda_2\Gamma(\gamma_1,\gamma_1/\vv_1,\gamma_1)\mu(|\cdot|^{\beta^*})^{\frac{\theta_3\theta_4}{\beta^*(1-\gamma_1)}}. \end{align*}
Below, set $V(x)=V_\beta(x)$ for simplicity. In particular, there are constants $\lambda_V:=\lambda_{V,M^*}>0$ and $ C_V:=C_{V,M^*}>0$
such that
\begin{equation}\label{e:drift}
 (\mathscr L_\mu V)(x)\le -\lambda_V V (x)+C_V,\quad x\in\R^d.
\end{equation}

For notational brevity, we
set
\begin{align*}
S_0:=\{(x,y)\in\R^{2d}:\lambda_V(V(x)+V(y))\le16 C_V  \}.
\end{align*}
Since $\R^d\ni x\mapsto V(x)$ is compact, the quantity $l_0:=1+\sup_{(x,y)\in S_0}|x-y|$ is finite.  In particular, by the definition of $V$ and the fact that $C_V $ is a  positive constant  dependent  on   $M^*$,
$l_0$ is dominated by a constant depending on   $M^*$.

For each $n\ge1$,
define
\begin{align*}
\Phi_n(x,y)=\psi(|x-y|)+a(1-h_n(|x-y|))+\vv(1-h_n(|x-y|))(  V(x)+  V(y)),\quad x,y\in\R^d.
\end{align*}
Herein,  $a:=\frac{8Kc(1+\kk)}{J(\kk)}+\kk^2c^2\e^{-cl_0}$, $\vv:=\frac{1}{16C_V}\kk^2c^2\e^{-cl_0}J(\kappa)$ with  $c:=1+\frac{16Kl_0}{J(\kk)\kk^2}$ and $\kappa,K>0$ being given in Assumptions $($${\bf A}_{51}$$)$ and $($${\bf A}_{52}$$)$ respectively,
\begin{align*}
\psi(r):=
\begin{cases}
1-\e^{-cr},\quad~~~~~~~~~~~~~~~~~~~~  0\le r\le 2l_0,\\
1-\e^{-2cl_0}+\frac{c\e^{-2cl_0}(r-2l_0)}{1+r-2l_0},\quad r\ge 2l_0 ,
\end{cases}
\end{align*}
and
\begin{align*}
h_n(r):=
\begin{cases}
1, \quad~~~~~~~~~~~~~~~~~~~~~~~~~~~~~~~~~~~~~~~~~~~~~~~~~~~~~~~~~~~~~~~   0\le r\le \frac{5}{8n},\\
(4n)^4\Big(r-\frac{7}{8n}\Big)^2\bigg(3\Big(r-\frac{7}{8n}\Big)^2+\frac{2}{n}\Big(r-\frac{7}{8n}\Big)+\frac{3}{8n^2}\bigg),\quad \frac{5}{8n}<r<\frac{7}{8n},\\
0, \quad~~~~~~~~~~~~~~~~~~~~~~~~~~~~~~~~~~~~~~~~~~~~~~~~~~~~~~~~~~~~~~~ r\ge \frac{7}{8n}.
\end{cases}
\end{align*}
Furthermore, we
define the coupling  operator $\tilde{\mathscr L}_\mu$ of $\mathscr L_\mu$  as follows (see \cite[(2.8)]{LMW}): for $f\in C_b^2(\R^{2d})$ and $x,y\in\R^d,$
\begin{align*}
(\tilde{\mathscr L}_\mu f)(x,y)&=\<(\nn_x f)(x,y),b(x,\mu)\>+\<(\nn_y f)(x,y),b(y,\mu)\>\\
&\quad+\frac{1}{2}\int_{\R^d}\big(f(x+z,y+z+(x-y)_\kk)-f(x,y)-\<(\nn_x f)(x,y),z\>\I_{\{|z|\le 1\}}\\
&\qquad\qquad\quad-\<(\nn_y f)(x,y),z+(x-y)_\kk\>\I_{\{|z+(x-y)_\kk|\le 1\}}\big)\nu_{(y-x)_\kk}(\d z)\\
&\quad+\frac{1}{2}\int_{\R^d}\big(f(x+z,y+z+(y-x)_\kk)-f(x,y)-\<(\nn_x f)(x,y),z\>\I_{\{|z|\le 1\}}\\
&\qquad\qquad\quad-\<(\nn_y f)(x,y),z+(y-x)_\kk\>\I_{\{|z+(y-x)_\kk|\le 1\}}\big)\nu_{(x-y)_\kk}(\d z)\\
&\quad+ \int_{\R^d}\big(f(x+z,y+z)-f(x,y)-\<(\nn_x f)(x,y)+(\nn_y f)(x,y),z\>\I_{\{|z|\le 1\}}\big)\\
&\qquad\quad\quad\times\big(\nu-\nu_{(y-x)_\kk}/2-\nu_{(x-y)_\kk}/2\big)(\d z).
\end{align*}

By taking  \cite[Proposition 2.1]{LMW} into consideration,   if  there exists a  constant $\lambda_0>0 $ such that for any
$n>1/\kk$ and
$x,y\in\R^d$ with $\frac{1}{n}\le |x-y|\le n,$
 \begin{equation}\label{EQ}
(\tilde{\mathscr L}_\mu \Phi_n)(x,y)\le-\lambda_0\Phi_n(x,y),
\end{equation}
then, according to the definition of $\Phi_n$,
 we infer that there is a constant $C_0>0$ so that for all $t\ge0$ and $x,y\in\R^d$,
\begin{align*}
\|\delta_xP_t^\mu-\delta_yP_t^\mu\|_V\le C_0\e^{-\lambda_0t}(V(x)+V(y))\I_{\{x\neq y\}}.
\end{align*}
Whereafter,  \eqref{ER} follows by noting that for any $\pi\in\mathscr C(\mu_1,\mu_2)$,
\begin{align*}
\|\mu_1P_t-\mu_2P_t\|_V\le  \int_{\R^d\times\R^d}\|\delta_xP_t^\mu-\delta_yP_t^\mu\|_V\,\pi(\d x,\d y).
\end{align*}

It is easy to see from (${\bf A}_{52}$) and \eqref{e:drift} that for any $x,y\in\R^d$ with $ x\neq y$,
\begin{align*}
(\tilde{\mathscr L}_\mu \Phi_n)(x,y)
&=\frac{1}{2}\nu_{(x-y)_\kk}(\R^d) \Lambda_n(|x-y|) \\
&\quad+\frac{\psi'(|x-y|)}{|x-y|}\<x-y,b(x,\mu)-b(y,\mu)\>+\vv\big((\mathscr L_\mu V)(x)+(\mathscr L_\mu V)(y)\big)\\
&\le \frac{1}{2}\nu_{(x-y)_\kk}(\R^d) \Lambda_n(|x-y|) +K\psi'(|x-y|)(|x-y|+1)\\
&\quad+\vv\big(-\lambda_V V(x)-\lambda_V V(y)+2C_V\big),
\end{align*}
where for any $r\ge0,$
\begin{align*}
\Lambda_n(r):=\psi(r+(\kk\wedge r)) -\psi(r-(\kk\wedge r))-2\psi(r)-ah_n(r-(\kk\wedge r)).
\end{align*}
Note from $\psi(0)=0$ and $h_n(0)=1$ that
\begin{equation*}
\Lambda_n(r)\le
\psi(2r ) -2\psi(r)- a,~0\le r\le \kk;~
\Lambda_n(r)\le\psi(r+\kk) -\psi(r-\kk)-2\psi(r), ~ \kk< r\le l_0.
\end{equation*}
Whence,   \cite[Lemma 2.5]{LMW}  yields  that
\begin{equation*}
\Lambda_n(r)\le
 - a,\quad  0\le r\le \kk; \quad \Lambda_n(r)\le
\psi''(r)\kk^2,\quad     \kk< r\le l_0.
\end{equation*}
Subsequently, by  using  $\psi'(r)=c\e^{-cr}$ and $\psi''(r)=-c^2\e^{-cr}$ for $r\in[0,l_0],$
we derive that
\begin{equation*}
(\tilde{\mathscr L}_\mu \Phi_n)(x,y)\le
\begin{cases}
 -\frac{1}{2}J(\kk) a +Kc (\kk+1) +2C_V\vv-\lambda_V\vv(V(x)+V(y)), \quad\quad\quad~  (x,y)\in \Gamma_{ 1/n,\kk},\\
  -\big(\frac{1}{2}J(\kk)\kk^2c  -2Kl_0 \big)c\e^{-c|x-y|}  +2C_V\vv-\lambda_V\vv(V(x)+V(y)),~   (x,y)\in \Gamma_{\kk,l_0},\\
 2Kl_0\psi'(l_0)-2\vv C_V+\vv\big(-\lambda_V V(x)-\lambda_V V(y)+4C_V\big),\quad\quad\quad~    (x,y)\in\Gamma_{l_0,\8},
\end{cases}
\end{equation*}
where $\Gamma_{r_1,r_2}:=\{(x,y)\in\R^{2d}:r_1\le |x-y|\le r_2\}$ for $0\le r_1<r_2\le\8$.
The estimate above, along with   the definitions of $a$ and $c$,   implies that
\begin{equation*}
(\tilde{\mathscr L}_\mu \Phi_n)(x,y)
\le
\begin{cases}
-\frac{1}{4}J(\kk)\kk^2c\e^{-cl_0}-\lambda_V\vv(V(x)+V(y)), \qquad\qquad  ~  (x,y)\in  S_0\cap\Gamma_{1/n, l_0},\\
-\frac{1}{4}J(\kk)\kk^2c\e^{-cl_0}-\lambda_V\vv(V(x)+V(y)),  \qquad\qquad ~  (x,y)\in  S_0^c\cap\Gamma_{1/n, l_0},\\
-\frac{1}{8}J(\kk)\kk^2c\e^{-cl_0}-\frac{3\vv}{4}\lambda_V(V(x)+V(y)),  \qquad\qquad  (x,y)\in  S_0^c\cap\Gamma_{l_0, \8}.
\end{cases}
\end{equation*}
Therefore, we conclude that
the assertion \eqref{EQ} is valid by taking
\begin{align*}
\lambda_0:=\frac{1}{4} \bigg(\frac{J(\kk)\kk^2c^2\e^{-cl_0}}{ 2(2+a) } \wedge  (3\lambda_V) \bigg).
\end{align*}

Finally, according to \eqref{ER} and the Banach fixed point theorem, we can easily see that
$(Y_t^\mu)_{t\ge0}$ has a unique IPM $\pi^\mu$. Next,  due to the invariance of $\pi^\mu$, it follows from \eqref{ER} that for any $x\in\R^d$ and $\mu\in\mathscr P_{\beta^*}^{M^*}$,
\begin{align*}
\|\pi^\mu-\delta_xP_t^\mu\|_{\rm var} \le C_{M^*}\e^{-\lambda_{M^*}t}(V(x)+\pi^\mu(V)).
\end{align*}
As a result, the assertion \eqref{EE} follows.
\end{proof}

\noindent {\bf Acknowledgements.}\,\,
The research of Jianhai Bao is supported by the National Key R\&D Program of China (2022YFA1006004).
The research of Jian Wang is supported by the National Key R\&D Program of China (2022YFA1006003) and the National Natural Science Foundations of China (No. 12225104).

\end{document}